\tikzset{font={\fontsize{10pt}{12}\selectfont}}
\numberwithin{equation}{section}
\numberwithin{equation}{section}
\newtheorem{thm}{Theorem}[section]
\newtheorem{prop}[thm]{Proposition}
\newtheorem{lem}[thm]{Lemma}
\theoremstyle{definition}
\newtheorem{rem}[thm]{Remark}
\newtheorem{eg}[thm]{Example}
\renewcommand{\theequation}{\thesection.\arabic{equation}}
\newcommand{\beq}{\begin{equation}}
\newcommand{\eeq}{\end{equation}}
\newcommand{\C}{{\mathbb C}}
\newcommand{\Z}{{\mathbb Z}}
\newcommand{\R}{{\mathbb R}}
\newcommand{\bLa}{{\boldsymbol \La}}
\newcommand{\Gr}{{\mathrm{Gr}}}
\newcommand{\Wr}{{\mathrm{Wr}}}
\newcommand{\cH}{\mathcal{H}}
\newcommand{\cM}{\mathcal{M}}
\newcommand{\cP}{\mathcal{P}}
\newcommand{\bP}{\mathbb{P}}
\newcommand{\cla}{\check{\lambda}}
\newcommand{\cmu}{\check{\mu}}
\newcommand{\sGr}{\mathrm{sGr}}
\newcommand{\bs}{\boldsymbol}
\newcommand{\g}{\mathfrak{g}}
\newcommand{\h}{\mathfrak{h}}
\newcommand{\n}{\mathfrak{n}}
\newcommand{\mc}{\mathcal}
\newcommand{\gl}{\mathfrak{gl}}
\newcommand{\sln}{\mathfrak{sl}}
\newcommand{\la}{\lambda}
\newcommand{\La}{\Lambda}
\newcommand{\sing}{\mathrm{sing}}
\newcommand{\End}{\mathop{\rm End}}
\newcommand{\Ker}{\mathop{\rm Ker}}
\newcommand{\tr}{{\rm tr}}
\newcommand{\gge}{\geqslant}
\newcommand{\lle}{\leqslant}
\newcommand{\opg}{\mathrm{op}_{^t\g}}
\newcommand{\Opg}{\mathrm{Op}_{^t\g}}
\newcommand{\cB}{\mathcal{B}}
\newcommand{\tl}{\tilde}
\newcommand{\mb}{\mathbb}
\newcommand{\pa}{{\partial}}
\newcommand{\calpha}{\check{\alpha}}
\newcommand{\crho}{\check{\rho}}
\def\@eqnnum{{\normalfont \color{red} (\theequation)}}
\begin{document}
	\pagestyle{myheadings}
	
	\setcounter{page}{1}

	\title{On the Gaudin model of type G$_2$}
	
	\author{Kang Lu and E. Mukhin}
	\address{K.L.: Department of Mathematical Sciences,
		Indiana University-Purdue University\newline
		\strut\kern\parindent Indianapolis, 402 N.Blackford St., LD 270,
		Indianapolis, IN 46202, USA}\email{lukang@iupui.edu}
	\address{E.M.: Department of Mathematical Sciences,
		Indiana University-Purdue University\newline
		\strut\kern\parindent Indianapolis, 402 N.Blackford St., LD 270,
		Indianapolis, IN 46202, USA}\email{emukhin@iupui.edu}

	\begin{abstract}
    We derive a number of results related to the Gaudin model associated to the simple Lie algebra of type G$_2$. 
  
    We compute explicit formulas for solutions of the Bethe ansatz equations associated to the tensor product of an arbitrary finite-dimensional irreducible module and the vector representation. We use this result to show that the Bethe ansatz is complete in any tensor product where all but one factor are vector representations and the evaluation parameters are generic. 
		
    We show that the points of the spectrum of the Gaudin model in type G$_2$ are in a bijective correspondence with self-self-dual spaces of polynomials.
	We study the set of all self-self-dual spaces -- the self-self-dual Grassmannian. We establish a stratification of the self-self-dual Grassmannian with the strata labeled by unordered sets of dominant integral weights and unordered sets of nonnegative integers, satisfying certain explicit conditions. We describe closures of the strata in terms of representation theory.

\medskip

	\noindent
	{\bf Keywords:} G$_2$, higher Gaudin Hamiltonians, Bethe ansatz, self-self-dual spaces of polynomials, stratification.
	\end{abstract}
	
	\maketitle
	\thispagestyle{empty}
	\section{Introduction}
	In the last several decades the Gaudin models were a subject of numerous studies. While some papers deal with general situation, see for example \cite{FFRe,F2,R,V1,V}, the most complete results are obtained in type A. One reason is that the underlying geometry is given by osculating Schubert calculus which is well understood, see \cite{MTV1,MTV2}. The types B and C are the next best studied cases, and it is known that the cases of types D, E, and F are the most difficult ones where new methods are needed. 
	
	In this paper we study the Gaudin model in type G$_2$ where one can use the approach similar to types A, B, C. We get help from 
	recent papers  \cite{MRR}, where the explicit formulas for higher Gaudin Hamiltonians of type G$_2$ are described, and \cite{R}, where eigenvalues of the algebra of higher Gaudin Hamiltonians (we call it Bethe algebra) are identified with opers. 
	Our work also builds on ideas of \cite{BM}, where the self-self-dual spaces are defined and studied, and  of \cite{LMV2}, which describes the stratification of the Grassmannians in types A, B, C.
	
    Let us describe our results in more detail. Let $\g$ be the Lie algebra of type  G$_2$. First, we use \cite{MRR} to describe explicitly the Bethe algebra $\mc B$ -- the remarkable commutative subalgebra of the enveloping algebra ${\mc U}(\g[t])$ of the current algebra of $\g$, see Section \ref{sec:Bethe alg}. The Bethe algebra $\mc B$ commutes with $\g$. The objective of the Gaudin model is to study the common eigenvectors and eigenvalues of $\mc B$ acting on $\g[t]$-modules.	
	
    We define the universal differential operator $\mc D^{\mc B}$, the monic differential operator of order $7$ whose coefficients are power series with coefficients in $\mc B$, see Section \ref{sec: diffop}. If $v$ is an eigenvector of $\mc B$  in some $\g[t]$-module $V$ then replacing the coefficients of the universal differential operator with the corresponding eigenvalues we obtain a scalar differential operator $\mc D_v$ with rational coefficients. One of our main results is that if $V$ is a finite-dimensional irreducible $\g[t]$-module then there exists a unique monic polynomial $f$ depending on $V$ such that $f\cdot(\Ker\mc D_v)$ is a self-self-dual space of polynomials with singularities and exponents explicitly determined by $V$. Moreover this determines a bijection between joint eigenvalues of $\mc B$ on finite-dimensional irreducible $\g[t]$-modules and self-self-dual spaces of polynomials, see Theorem \ref{bi rep ssgr}.
	
    The self-self-dual spaces of polynomials are 7-dimensional spaces of polynomials with a remarkable set of symmetries, see Section \ref{sec ssd gr}. These spaces were introduced and studied in \cite{BM}. We call the set of all self-self-dual spaces of polynomials the self-self-dual Grassmannian and denote it by $\mb S\Gr_d$ where $d-1$ is the maximal allowed degree of the polynomials. The set $\mb S\Gr_d$ is an algebraic set of dimension $d-7$. Using the bijection above we are able to describe a stratification of $\mb S\Gr_d$ where the strata and the closures of the strata are described by the representation theory of $\g$, see Theorems \ref{thm sgr dec}, \ref{thm G strata}. Each stratum is a ramified covering of $(\C\mb P_1)^n$ without diagonals with an appropriate $n$, quotient by the free action of an appropriate symmetric group, see Proposition \ref{prop bij BC}. 
   
    The technical part of the paper is the Bethe ansatz method for finding eigenvectors of $\mc B$. While in general this method does not provide the full set of eigenvectors, we show that it does in the case when $V=\bigotimes_{s=1}^n V_{\omega_2}(z_s)$ for generic values of $z_1,z_2,\dots,z_n$, see Theorem \ref{thm G completeness}. Here $V_{\omega_2}$ is the 7-dimensional irreducible $\g$-module and $V_{\omega_2}(z)$ is the corresponding evaluation $\g[t]$-module with evaluation parameter $z\in\C$. We reduce this statement to  the case $V_{\la}(z_1)\otimes V_{\omega_2}(z_2)$, as in \cite{MV1}. In this case we compute the solutions of the Bethe ansatz equations explicitly. The technique for this computation was developed in \cite{LMV1} and the answer is described in Appendix \ref{sec:solutions}. It turns out that in conjunction with \cite{R} this is sufficient to establish the bijection between the $\mc B$-eigenvalues and self-self-dual spaces of polynomials. 
		
    It is expected that the spectrum of the Gaudin model is simple, meaning that each $\mc B$-eigenvalue corresponds to a unique (up to $\g$ action) $\mc B$-eigenvector. Then our bijection can be interpreted as a resolution of singularities in the Bethe ansatz method.
   
    The paper is constructed as follows. We set up our notation for the Lie algebra of type G$_2$ in Section \ref{sec start} and proceed to define the Bethe subalgebra $\mc B$ in Section \ref{sec Gaudin}. We obtain our results on the Bethe ansatz in Section \ref{sec n=2}. The self-self-dual spaces of polynomials and the stratification of $\mb S\Gr_d$ are studied in Section \ref{sec gr}. Section \ref{sec oper} establishes the connection of self-self-dual spaces to the opers which is used in the proofs. We give explicit formulas for solutions of the Bethe ansatz equation in Appendix \ref{sec:solutions} and an example of our stratification of $\mb S\Gr_{11}$ in Appendix \ref{app ex}.
	
	\medskip
	
	{\bf Acknowledgments.} 
	This work was partially supported by a grant from the Simons Foundation  \#353831.
    KL was partially supported by Zhejiang Province Science Foundation, grant No. LY14A010018.
 
	\section{The Lie algebra of type G$_2$}\label{sec start}
	For any Lie algebra $\g$, we denote by $\mc U(\g)$ the universal enveloping algebra of $\g$.
	\subsection{Simple Lie algebra of type G$_2$}\label{sec sla}
	Let $\g$ be the simple Lie algebra of type G$_2$ over $\C$ with the Cartan matrix $A=(a_{i,j})_{i,j=1}^2$,
	\[
	A=\begin{pmatrix}
	2 & -1\\
	-3 & 2
	\end{pmatrix}.
	\]
	
	Note that $\dim \g=14$. Let $D=\mathrm{diag}(d_1,d_2)=\mathrm{diag}(3,1)$, then $B=DA$ is symmetric.
	
	Let $\h\subset\g$ be the Cartan subalgebra and let $\g=\n_-\oplus\h\oplus\n_+$ be the Cartan decomposition. Let $\alpha_1\in\h^*$ be the long root and $\alpha_2\in\h^*$ the short root. Let $\calpha_1,\calpha_2\in \h$ be the corresponding coroots. The set of positive roots is
	\[\alpha_1,\quad \alpha_2,\quad \alpha_1+\alpha_2, \quad \alpha_1+2\alpha_2, \quad \alpha_1+3\alpha_2,\quad 2\alpha_1+3\alpha_2.\]
	
	Let $e_1,e_2$, $\calpha_1,\calpha_2$, $f_1,f_2$ be the Chevalley generators of $\g$. 
	
	Fix a nondegenerate invariant bilinear form $(\ ,\ )$ on $\g$ such that $(\calpha_i,\calpha_j)=a_{i,j}/d_j$. Define the corresponding invariant bilinear form on $\h^*$ such that $(\alpha_i,\alpha_j)=d_ia_{i,j}$. Specifically, we have
	\[
	(\alpha_1,\alpha_1)=6,\quad (\alpha_1,\alpha_2)=-3,\quad (\alpha_2,\alpha_2)=2.
	\]
	We have $\langle \lambda,\calpha_i\rangle=2(\lambda,\alpha_i)/(\alpha_i,\alpha_i)$ for $\lambda\in\h^*$. In particular, $\langle\alpha_j,\calpha_i\rangle=a_{i,j}$. Let $\omega_1,\omega_2\in\h^*$ be the fundamental weights, $\langle \omega_j,\calpha_i\rangle=\delta_{i,j}$.
	
	Let $\mathcal P=\{\la\in\mathfrak h^*|\langle \la,\calpha_i\rangle\in\mathbb Z,~i=1,2\}$ and $\mathcal P^+=\{\la\in\mathfrak h^*|\langle \la,\calpha_i\rangle\in\mathbb Z_{\gge 0},~i=1,2\}$ be the weight lattice and the cone of dominant integral weights. 
	
	Let $\rho=3\alpha_1+5\alpha_2\in {\mathfrak h}^*$, then $\langle \rho,\calpha_i\rangle =1$, $i=1,2$.  We have $(\rho,\alpha_i)=(\alpha_i,\alpha_i)/2$.
	
	For $\la\in {\mathfrak h}^*$, let $V_\la$ be the irreducible $\g$-module with highest weight $\la$. We
	denote $\langle \la,\calpha_i\rangle$ by $\lambda_i$ and sometimes write $(\la_1,\la_2)$ for $\la$ and $V_{(\la_1,\la_2)}$ for $V_\la$.
	
	The Weyl group $ W\subset\mathrm{Aut}(\mathfrak h^*)$ is generated by the simple reflections $s_1,s_2$,
	\[s_i(\la)=\la-\langle \la,\calpha_i\rangle \alpha_i,\quad \la\in\mathfrak h^*.\] The Weyl group of type G$_2$ is isomorphic to the dihedral group D$_6$, in particular, $|W|=12$. We use the notation
	\[\sigma\cdot\la=\sigma(\la+\rho)-\rho,\quad \sigma\in W,\quad\la\in\mathfrak h^*,\]
	for the shifted action of the Weyl group.
	
	The coproduct $\Delta:\mc U(\g)\to \mc U(\g)\otimes \mc U(\g)$ is defined to be the homomorphism of algebras such that $\Delta x=1\otimes x+x\otimes 1$, for all $x\in \g$. Let $(x_i)_{i=1}^{14}$ be an orthonormal basis with respect to the bilinear form $(\ ,\ )$ on $\g$. Let $\Omega=\sum_{i=1}^{14}x_i\otimes x_i\in \g\otimes \g\subset \mc U(\g)\otimes\mc U(\g)$ be the Casimir operator. The operator $\Omega$ is independent of the choice of the orthonormal basis $(x_i)_{i=1}^{14}$.
	
	Let $M$ be a $\g$-module.
	Let $(M)^\sing=\{v\in M~|~\n_+v=0\}$ be the subspace of singular vectors in $M$.
	For $\mu\in\h^*$ let $(M)_{\mu}=\{v\in M~|~hv=\langle \mu,h \rangle v,~\text{for all }h\in\h\}$  be the subspace of $M$ of vectors of weight $\mu$.
	Let $(M)^{\sing}_{\mu}=(M)^\sing\cap (M)_\mu$ be the subspace of singular vectors in $M$ of weight $\mu$.
	
	Given a $\g$-module $M$, denote by $(M)^{\g}$ the subspace of $\g$-invariants in $M$. The subspace $(M)^{\g}$ is the multiplicity space of the trivial $\g$-module in $M$. It is well known that $\dim(V_{\la}\otimes V_{\mu})^{\g}=\delta_{\la,\mu}$ for any $\g$-weights $\la$ and $\mu$.
	
	\subsection{The matrix $G$}\label{sec matrix}
	We define a $7\times7$ matrix $G$ whose entries are in $\g$ in a standard way. 
	Consider $V_{\omega_2}$, the second fundamental representation of $\g$. We have $\dim V_{\omega_2}=7$. Let
	\[
	\pi:\g\to \End(V_{\omega_2}).
	\]This representation is faithful, that is it defines an embedding $\pi: \g\to \gl_7$. 
	
	The nondegenerate invariant bilinear form $(\ ,\ )$ on $\g$ is explicitly given by 
	\[
	(x_1,x_2)=\frac{1}{6}\,\tr\,(\pi(x_1)\pi(x_2)),\qquad x_1,x_2\in \g.
	\]
	
	We choose a basis $\{v_i\}_{i=1}^7$ of $V_{\omega_2}$ such that $v_1$ is a highest weight vector, $v_2=f_2v_1$, $v_3=f_1v_2$, $\sqrt{2} v_4=f_2v_3$, $\sqrt{2} v_5= f_2v_4$, 	$v_6=f_1v_5$, $v_7=f_2v_6$. That allows us to regard elements of $\g$ as $7\times7$ matrices. 
	
	Let $e_{ij}\in \End(\C^7)$ denote the standard matrix units with respect to the basis $\{v_i\}_{i=1}^7$. For example, we have $f_1=e_{32}+e_{65}$, $e_2=e_{12}+\sqrt{2}e_{34}+\sqrt{2}e_{45}+e_{67}$.

    Introduce the form on $\C^7$ by $(v_i,v_j)=(-1)^{i+1}\delta_{i,8-j}$. Set $F_{ij}=e_{ij}-(-1)^{i+j} e_{8-j,8-i}$, $1\leqslant i,j\leqslant 7$. The special orthogonal algebra $\mathfrak{so}_7$ corresponding to the form $(\ ,\ )$ is spanned by $F_{ij}$.
    Clearly, we have $\pi(\g)\subset {\mathfrak{so}_7}$.

	Let
	\[
	G=(\pi\otimes {\rm id})(\Omega)=\sum_{i=1}^{14}\pi(x_i)\otimes x_i=\sum_{i,j=1}^7 e_{ji}\otimes G_{ij} \in \End(\C^7)\otimes \mc U(\g).
	\]
	 
	We use the embedding $\pi$ to describe $G_{ij}$ explicitly. 
	For
	$i,j\in\{2, 3,7\}$ with $i\ne j$ we have $G_{ij}=3 F_{ij}$. Next,
	\[
	G_{22}=2 F_{22}-F_{33}-F_{77},\qquad
	G_{33}=2 F_{33}-F_{22}-F_{77},\qquad
	G_{77}=2 F_{77}-F_{22}-F_{33}.
	\]
	Furthermore,
	$$
	G_{24}=2 F_{24}-\sqrt2 F_{13},\qquad
	G_{34}=2 F_{34}+\sqrt2 F_{67},\qquad
	G_{74}=2 F_{74}-\sqrt2 F_{52},
	$$
	$$
	G_{42}=2 F_{42}-\sqrt2 F_{31},\qquad
	G_{43}=2 F_{43}+\sqrt2 F_{76},\qquad
	G_{47}=2 F_{47}-\sqrt2 F_{25},
	$$
	and
	$$
	G_{24}=-\sqrt2 G_{13},\qquad
	G_{34}=\sqrt2 G_{67},\qquad
	G_{74}=-\sqrt2 G_{52},
	$$
	$$
	G_{42}=-\sqrt2 G_{31},\qquad
	G_{43}=\sqrt2 G_{76},\qquad
	G_{47}=-\sqrt2 G_{25}.
	$$
	The remaining entries of the matrix $G$ are determined by $G_{ij}+(-1)^{i+j}G_{8-j,8-i}=0$.
	
	Note that our conventions are different from \cite{MRR} since we use a different basis for $V_{\omega_2}$, our matrix $G$ and the corresponding matrix in \cite{MRR} are conjugates by an explicit transition matrix.
	
	The elements $G_{22}, G_{33}$ and $G_{ij}$, $i,j\in\{2,3, 4,7\},$ where $i\ne j$, form a basis of $\g$. 
	
	Finally, the Casimir operator $\Omega$ is computed via matrix $G$ as follows.
  Let $G_1,G_2\in \End(\C^7)\otimes \mc U(\g)\otimes \mc U(\g)$ be defined by
	$G_1=\sum_{i,j=1}^7e_{ij}\otimes G_{ji}\otimes 1$ and  $G_2=\sum_{i,j=1}^7e_{ij}\otimes 1\otimes G_{ji}$.
	Then 
	\beq\label{eq:omega}
	\Omega=\dfrac{1}{6}\,\tr\, (G_1G_2).
	\eeq
	
	\subsection{Current algebra $\g[t]$}Let $\g[t] = \g\otimes\C[t]$ be the Lie algebra of $\g$-valued polynomials with the pointwise commutator. We call it the \emph{current algebra} of $\g$. We identify the Lie algebra $\g$ with the subalgebra $\g\otimes 1$ of constant polynomials in $\g[t]$. Hence, any $\g[t]$-module has the canonical structure of a $\g$-module.
	
	It is convenient to collect elements of $\g[t]$ in generating series of a formal variable $x$. For $g\in \g$, set
	\beq\label{eq generating series}
	g(x)=\sum_{s=0}^\infty (g\otimes t^s)x^{-s-1}.
	\eeq
	
	For each $a\in\C$, there exists an automorphism $\tau_a$ of $\g[t]$, $\tau_a:g(x)\to g(x-a)$. Given a $\g[t]$-module $M$, we denote by $M(a)$ the pull-back of $M$ through the automorphism $\tau_a$. As $\g$-modules, $M$ and $M(a)$ are isomorphic by the identity map.
	
	We have the evaluation homomorphism, $\mathrm{ev}: \g[t]\to \g$, $\mathrm{ev} : g(x)\to g x^{-1}$. Its restriction to the subalgebra $\g\subset \g[t]$ is the identity map. For any $\g$-module $M$, we denote by the same letter the $\g[t]$-module, obtained by pulling $M$ back through
	the evaluation homomorphism. For each $a\in\C$, the $\g[t]$-module $M(a)$ is called an \emph{evaluation module}.
	It is well known that all finite-dimensional irreducible $\g[t]$-modules are tensor products of evaluation modules $V_{\la^{(1)}}(z_1)\otimes\dots\otimes V_{\la^{(n)}}(z_n)$ with  dominant integral $\g$-weights $\la^{(1)},\dots,\la^{(n)}$ and distinct evaluation parameters $z_1,\dots,z_n$.
	
	\section{Gaudin Model}\label{sec Gaudin}

    \subsection{Bethe algebra}\label{sec:Bethe alg}The Bethe algebra is a remarkable commutative subalgebra of the universal enveloping algebra $\mc U(\g[t])$ of the current Lie algebra of $\g$. The Bethe algebra of a simple Lie algebra was described in \cite{FFRe} (called the algebra of higher Gaudin Hamiltonians there). An explicit set of generators of the Bethe algebra in the Lie algebra of type G$_2$ is given in \cite{MRR}. We recall this result.

    Denote by $G(x)$ the matrix
    \[
    G(x)=\sum_{i,j=1}^7 e_{ij}\otimes G_{ij}(x),
    \]where $G_{ij}$ are given in Section \ref{sec matrix}, see also \eqref{eq generating series}. 
	Note that $\lim_{x\to \infty} xG(x)=G^{\mathrm{T}}$, where $G^\mathrm{T}$ denotes the transpose of the matrix $G$.
	
	 Denote by $S_2(x)$ and $S_6(x)$ the power series in $x^{-1}$ with coefficients in $\mc U(\g[t])$ obtained by applying an anti-homomorphism which sends $G[-r-1]\mapsto\pa_x^r G(x)/r!$ to (3.6) and (3.7) of \cite{MRR} respectively. Explicitly, 
	\[
	S_2(x)=\tr\, (G(x)^2),
	\]
	\begin{align*}
	S_6(x)=&\,\tr\, (G(x)^6)+5\,\tr\,(G(x)^4G'(x))+7\,\tr\,(G(x)^3G''(x))+114\,\tr\,((G''(x))^2)\\&-639\,\tr\,((G'(x))^3)+31\,\tr\,(G(x)^2(G'(x))^2)-156\,\tr\,(G(x)G'(x)G''(x)).
	\end{align*}
	
	Write
	$$
	S_2(x)=\sum_{i=2}^\infty S_{2\,i}\,x^{-i}\qquad\text{and} \qquad S_6(x)=\sum_{j=6}^\infty S_{6\,j}\,x^{-j},
	$$
	where $S_{2\,i},S_{6\,j}\in\mc U(\g[t])$, $i\in\Z_{\gge 2}$, $j\in\Z_{\gge 6}$. We call the unital subalgebra of $\mc U(\g[t])$ generated by $S_{ij}\in\mc U(\g[t]) $, $i=2,6$, $j\in \Z_{\gge i}$, the \emph{Bethe algebra}  of $\g$ and denote it by $\mc B$.
	
	The Bethe algebra $\cB$ of $\g$ is a commutative subalgebra of $\mc U(\g[t])$ which commutes with the subalgebra $\mc U(\g)\subset \mc U(\g[t])$. As a subalgebra of $\mc U(\g[t])$, the algebra $\cB$ acts on any $\g[t]$-module $M$. Since $\cB$ commutes with $\mc U(\g)$, it preserves the subspace of singular vectors $(M)^\sing$ as well as weight subspaces of $M$. Therefore, the subspace $(M)^{\sing}_{\la}$ is $\cB$-invariant for any
	weight $\la$.
	
	\medskip

    We denote $M(\infty)$ the $\g$-module $M$ with the trivial action of the Bethe algebra $\mc B$. More generally, for a $\g[t]$-module $M'$, we denote by $M'\otimes M(\infty)$ the $\g$-module where we define the action of $\mc B$ so that it acts trivially on $M(\infty)$. Namely, the element $b\in\mc B$ acts on $M'\otimes M(\infty)$ by $b\otimes 1$.
		
	Note that for $g\in\g$, $a\in \C$, and $\g$-module $M$, the action of $g(x)$ on $M(a)$ is given by $g/(x-a)$ on $M$. Therefore, the action of series $S_i(x)$, $i=2,6$, on the module  $M'\otimes M(\infty)$ is the limit of the action of the series $S_i(x)$ on the module $M'\otimes M(z)$ as $z\to \infty$ in the sense of rational functions of $x$. However, such a limit of the action of the coefficients $S_{ij}$, $i=2,6$, $j\in \Z_{\gge i}$, on the module $M'\otimes M(z)$ as $z\to \infty$ does not exist.
	
	Let $M=V_\la$ be an irreducible $\g$-module and let $M'$ be an irreducible finite-dimensional $\g[t]$-module.
		
	\begin{lem}\label{nonzero weight}
	We have an isomorphism of vector spaces: 
	$$\pi:\ (M'\otimes V_{\la})^{\g}\to (M')^{\sing}_{\la}, $$ 
	given by the projection to a lowest weight vector in $V_{\la}$.  
	The map $\pi$ is an isomorphism of $\mc B$-modules $(M'\otimes V_{\la}(\infty))^{\g}\to (M')^{\sing}_{\la}$. \qed
	\end{lem}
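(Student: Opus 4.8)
The plan is to first build the vector-space isomorphism by hand and then observe that the $\mc B$-equivariance is essentially forced by the definition of the $(\infty)$-action. Fix a nonzero lowest weight vector $v^-\in V_\la$; since in type $\mathrm G_2$ the longest element $w_0$ of $W$ acts as $-\id$, the weight of $v^-$ is $-\la$ and it is the unique lowest weight of $V_\la$. Let $\phi\in V_\la^*$ be the functional dual to $v^-$, i.e.\ the coefficient of $v^-$ in the weight decomposition, and set $\pi=(\id_{M'}\otimes\,\phi)\colon M'\otimes V_\la\to M'$; this is exactly the projection to the lowest weight line. First I would check that $\pi$ maps $(M'\otimes V_\la)^{\g}$ into $(M')^{\sing}_{\la}$. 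The weight statement is bookkeeping: an invariant $w$ has weight $0$, and contracting the weight $(-\la)$ part of the second factor leaves a vector of weight $\la$ in $M'$. For singularity, write $w=\sum_j m_j\otimes u_j$ with $u_j$ of pure weight; invariance gives $(e_i\otimes 1)w=-(1\otimes e_i)w$ for each simple $e_i$, so $e_i\,\pi(w)=(\id\otimes\phi)\big((e_i\otimes1)w\big)=-(\id\otimes\phi)\big((1\otimes e_i)w\big)=-\sum_j\phi(e_iu_j)\,m_j$. But $\phi(e_iu_j)$ is the coefficient of $v^-$ in $e_iu_j$, a vector of weight $\wt(u_j)+\alpha_i$; this can equal $-\la$ only if $\wt(u_j)=-\la-\alpha_i$, a weight strictly below the lowest weight of $V_\la$ and hence absent. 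Thus $\phi(e_iu_j)=0$ and $\pi(w)\in(M')^{\sing}_{\la}$.

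Next I would show $\pi$ is bijective. Decomposing the finite-dimensional module $M'=\bigoplus_k V_{\mu_k}$ as a $\g$-module, both constructions are additive in $M'$, and the stated identity $\dim(V_{\mu}\otimes V_\la)^{\g}=\delta_{\mu,\la}$ together with the fact that $V_\mu$ carries singular vectors only in weight $\mu$ shows that $\dim(M'\otimes V_\la)^{\g}$ and $\dim(M')^{\sing}_{\la}$ both equal the multiplicity of $V_\la$ in $M'$. It therefore suffices to prove $\pi\neq 0$ on the $V_\la$-isotypic part, i.e.\ for $M'=V_\la$. There $(V_\la\otimes V_\la)^{\g}$ is spanned by the canonical invariant $\Theta=\sum_k a_k\otimes b_k$ dual to the nondegenerate invariant form on $V_\la$; since such a form pairs the weight spaces $-\la$ and $\la$ nondegenerately, the $v^-$-component of $\Theta$ equals (a nonzero multiple of the highest weight vector)$\,\otimes v^-$, whence $\pi(\Theta)\neq 0$. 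This establishes the claimed isomorphism of vector spaces.

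Finally, I would upgrade this to an isomorphism of $\mc B$-modules. On $M'\otimes V_\la(\infty)$ the element $b\in\mc B$ acts by $b\otimes 1$, that is, only on the first tensor factor. Because $\mc B$ commutes with $\mc U(\g)\subset\mc U(\g[t])$, a short computation with $\Delta(g)=g\otimes1+1\otimes g$ shows that $b\otimes1$ preserves the space of $\g$-invariants, so $\mc B$ indeed acts on $(M'\otimes V_\la(\infty))^{\g}$. Equivariance is then immediate: $\phi$ contracts the second factor while $b$ acts on the first, so the two commute and $\pi(b\cdot w)=(\id\otimes\phi)\big((b\otimes1)w\big)=b\,\pi(w)$, which is precisely the restriction of the $\mc B$-action on $M'$ to $(M')^{\sing}_{\la}$.

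The main obstacle is really just the careful weight and singularity bookkeeping together with the nonvanishing in the bijectivity step; the $\mc B$-equivariance is formal once the $(\infty)$-convention is unwound, since all of $\mc B$ acts through the first factor only. The single genuinely type-$\mathrm G_2$ input is self-duality ($w_0=-\id$), used both to identify the lowest weight as $-\la$ and, through $\dim(V_{\mu}\otimes V_\la)^{\g}=\delta_{\mu,\la}$, in the dimension count.
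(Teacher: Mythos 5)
Your argument is correct and complete. Note that the paper states Lemma \ref{nonzero weight} with no proof at all (it ends in \qed, being treated as standard), so there is no authorial argument to compare against; what you have written is the expected folklore proof, and it holds up. The three essential points are all handled properly: the singularity of $\pi(w)$ via the identity $(e_i\otimes 1)w=-(1\otimes e_i)w$ and the vanishing of $\phi$ on the weight space $-\la-\alpha_i$ (empty, being below the lowest weight); the bijectivity via complete reducibility of $M'$ as a $\g$-module, the dimension count $\dim(V_\mu\otimes V_\la)^{\g}=\delta_{\mu,\la}$ (valid as stated precisely because $w_0=-\id$ in type G$_2$, so all modules are self-dual), and the nonvanishing of $\pi$ on a single copy $M'=V_\la$ via the invariant pairing of the extremal weight lines $\pm\la$ --- this suffices since on each copy of $V_\la$ both sides are one-dimensional and $\pi$ restricts to the same fixed map; and the $\mc B$-equivariance, which as you say is formal once one notes that $b\otimes 1$ commutes with the diagonal $\g$-action (because $\mc B$ commutes with $\mc U(\g)\subset\mc U(\g[t])$) and that $\phi$ contracts only the second factor. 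You correctly isolate the unique type-G$_2$ input ($w_0=-\id$); in a general simple Lie algebra the same argument would produce an isomorphism $(M'\otimes V_\la)^{\g}\to (M')^{\sing}_{-w_0\la}$, and your proof makes transparent why the paper can suppress the dualization.
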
	
	
	Consider $\bP^1:=\C\cup\{\infty\}$. Set
	\[
	{\mathring{\bP}}_n:=\{\bm z=(z_1,\dots,z_n)\in (\mb{P}^1)^n~|~z_i\ne z_j\ \text{ for }\ 1\lle i<j\lle n\},
	\]
	\[
	\R{\mathring{\bP}}_n:=\{\bm z=(z_1,\dots,z_n)\in {\mathring{\bP}}_n~|~z_i\in\R \text{ or }z_i=\infty,\ \text{ for }\ 1\lle i\lle n\}.
	\]
	
	We are interested in the action of the Bethe algebra $\cB$ on the tensor product $\bigotimes_{s=1}^n V_{ \la^{(s)}}(z_s)$, where
	$\bLa=(\la^{(1)},\dots,\la^{(n)})$ is a sequence of dominant integral $\g$-weights and $\bm z=(z_1,\dots,z_n)\in {\mathring{\bP}}_n$. By Lemma \ref{nonzero weight}, it is sufficient to consider the spaces of invariants $(\bigotimes_{s=1}^n V_{\la^{(s)}}(z_s))^{\g}$. For brevity, we write $V_{\bLa,\bm z}$ for the $\mc B$-module $\bigotimes_{s=1}^n V_{ \la^{(s)}}(z_s)$ and $V_{\bLa}$ for the $\g$-module $\bigotimes_{s=1}^n V_{\la^{(s)}}$. 
	
	In what follows we also use the notation $V_{\bLa}$ and $V_{\bLa,\bm z}$ when $\bLa=(\la^{(1)},\dots,\la^{(n)})$ is an arbitrary sequence of $\g$-weights and $\bm z\in (\bP^1)^n$.

	\subsection{Universal differential operator}\label{sec: diffop}
	Define the power series $B_i(x)$, $i=2,3,\dots,7$, in $x^{-1}$ with coefficients in $\mc U(\g[t])$ by the formula:
	\begin{align*}
		&B_2(x)=-\frac{1}{6}S_2(x), \qquad\qquad\qquad\qquad\,\,\,\, \,\,\, B_3(x)=-\frac{5}{12}S_2'(x),\\
	&B_4(x)=\frac{1}{144}S_2^2(x)-\frac{1}{2}S_2''(x),\qquad\quad\,\,\,\,\,\,\,\,\, B_5(x)= \frac{1}{48}S_2(x)S_2'(x)-\frac{1}{3}S_2'''(x),\\
	&B_6(x)=\frac{1}{162}\Big(S_6(x)-\frac{11}{144}S_2^3(x)-\frac{7}{8}\big(S_2'(x)\big)^2-\frac{5}{2}S_2(x)S_2''(x)-96S_2^{(4)}(x)\Big),\\
	&B_7(x)=\frac{1}{15552}\Big(48S_6'(x)-3960S_2^{(5)}(x)-174S_2(x)S_2'''(x)\\ &\quad\qquad\qquad\qquad\qquad\qquad\qquad\qquad\qquad \,-366S_2'(x)S_2''(x)-11S_2^2(x)S_2'(x)\Big).
	\end{align*}
	
	The following lemma is clear. 
	\begin{lem}\label{B2 and B6}
	Coefficients of $B_i(x)$ are in the Bethe algebra $\mc B$. Coefficients of $B_2(x)$ and $B_6(x)$ generate the Bethe algebra $\mc B$. \qed
	\end{lem}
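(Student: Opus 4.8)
The plan is to prove the two assertions separately, both of which rest on the single fact that, by definition, $\mc B$ is the unital subalgebra of $\mc U(\g[t])$ generated by the coefficients of $S_2(x)$ and $S_6(x)$, together with the elementary observation that extracting $x^{-k}$-coefficients from formal derivatives and from products of such series never leads outside $\mc B$.

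For the first assertion, I would observe that each $B_i(x)$, $i=2,\dots,7$, is by construction a polynomial expression with coefficients in $\C$ in the series $S_2(x)$, $S_6(x)$ and their formal $x$-derivatives. Writing $S_2(x)=\sum_{i\gge 2}S_{2,i}\,x^{-i}$ and $S_6(x)=\sum_{j\gge 6}S_{6,j}\,x^{-j}$, the $r$-th formal derivative has $x^{-k}$-coefficient equal to a scalar multiple of $S_{2,k-r}$ (resp.\ $S_{6,k-r}$), hence an element of $\mc B$; and a product such as $S_2(x)S_2'(x)$ or $S_2^3(x)$ has $x^{-k}$-coefficient equal to a finite sum of products of such $S_{ij}$, again in $\mc B$ since $\mc B$ is a subalgebra. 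Taking $\C$-linear combinations of these, the $x^{-k}$-coefficient of each $B_i(x)$ is a polynomial in the generators $S_{ij}$ and therefore lies in $\mc B$.

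For the second assertion, let $\mc B'$ denote the unital subalgebra generated by the coefficients of $B_2(x)$ and $B_6(x)$; the first part already gives $\mc B'\subseteq\mc B$, so it remains to prove the reverse inclusion. Since $B_2(x)=-\tfrac16 S_2(x)$, the coefficients of $S_2(x)$ are scalar multiples of those of $B_2(x)$, whence $S_{2,i}\in\mc B'$ for all $i\gge 2$; consequently every coefficient of $S_2(x)$, of each of its derivatives, and of every polynomial in them also lies in $\mc B'$. Next I would solve the defining relation for $B_6(x)$ for $S_6(x)$, namely
\[
S_6(x)=162\, B_6(x)+\tfrac{11}{144}\,S_2^3(x)+\tfrac{7}{8}\big(S_2'(x)\big)^2+\tfrac{5}{2}\,S_2(x)S_2''(x)+96\,S_2^{(4)}(x).
\]
Every term on the right other than $162\,B_6(x)$ is a polynomial in $S_2(x)$ and its derivatives, so all of its coefficients lie in $\mc B'$ by the previous sentence, while the coefficients of $B_6(x)$ lie in $\mc B'$ by definition. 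Extracting the coefficient of $x^{-j}$ then yields $S_{6,j}\in\mc B'$ for every $j\gge 6$. Thus $\mc B'$ contains all the generators $S_{ij}$ of $\mc B$, giving $\mc B\subseteq\mc B'$ and hence $\mc B=\mc B'$.

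Consistent with the remark that the statement is clear, I do not expect a substantive obstacle; the argument is a direct verification. The only point deserving attention is the invertibility of the $B_6$–$S_6$ relation: one must note that it is \emph{triangular}, in the sense that all of the correction terms involve only $S_2(x)$ and its derivatives and never $S_6(x)$, and that the leading coefficient $\tfrac{1}{162}$ is nonzero, so that $S_6(x)$ can indeed be recovered once the coefficients of $S_2(x)$ are already available.
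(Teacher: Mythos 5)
Your proof is correct and fills in exactly the reasoning the paper leaves implicit: the paper offers no written proof at all (the lemma is introduced with ``The following lemma is clear''), and the intended argument is precisely your triangular inversion, using $S_2(x)=-6B_2(x)$ and then solving the $B_6$ relation for $S_6(x)$ in terms of $B_6(x)$ and polynomials in $S_2(x)$ and its derivatives. Your verification, including the observation that the correction terms never involve $S_6(x)$, is a faithful and complete write-up of that argument.
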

	\begin{rem}\label{rem B}
		The  power series $B_i(x)$, $i=2,3,\dots,7$, are computed from the preimages of the elements $w_i$, $i=2,3,\dots,7$, in classical $\mc W$-algebra under the affine Harish-Chandra isomorphism, see Section 4 of \cite{MRR}. Note that the following relation among $w_i$ in \cite{MRR} is true,
		\[
		8 w_7= 4 w_6' - 2 w_2^{(5)} - w_2w_2''' -3w_2'w_2''.
		\]
	\end{rem}
	
	\medskip
	
    Now we define \emph{the universal differential operator} $\mc{D^B}$ by
	\[
	\mc{D^B}=\pa_x^7+\sum_{i=2}^7B_i(x)\pa_x^{7-i}.
	\]
	The operator $\mc{D^B}$ is a monic differential operator of order $7$ whose coefficients are power series in $x^{-1}$ with coefficients in $\mc B\subset {\mc U} (\g[t])$.
	
	Let $M$ be a module over the Bethe algebra $\mc B$ and let $v\in M$ be a common eigenvector of $\mc B$: $B_i(x)v = h_i(x)v$, $i=2,\dots,7$. Then we call the scalar differential operator
	\beq\label{scalar diff op}
	\mathcal D_v=\pa_x^7+\sum_{i=2}^7h_i(x)\pa_x^{7-i}
	\eeq
	the \emph{differential operator associated with the eigenvector $v$.}
	
    Let $v\in V_{\bLa,\bs z}$ be a common eigenvector of $\mc B$, where $\bLa$ is a sequence of dominant integral $\g$-weights.
    Our choice of generators is such that the kernel of $f\cdot\mc D_v\cdot f^{-1}$ is a pure self-self-dual space of polynomials for some polynomial $f$, see Proposition \ref{prop diff oper} and Theorem \ref{bi rep ssgr} below.
    
    \subsection{Quadratic Gaudin Hamiltonians}
    Let $n$ be a positive integer and $M_1,\dots, M_n$ a sequence of $\g$-modules. Set $\bs M=M_1\otimes\dots\otimes M_n$.
	
	Let $\bm z=(z_1,\dots,z_n)$ be a sequence of distinct complex numbers. Introduce linear operators $\cH_1(\bm z),\dots,\cH_n(\bm z)$ in $\End(\bs M)$ by the formula
    $$
    \cH_i(\bm z)=\sum_{j,\ j\ne i}\frac{\Omega^{(i,j)}}{z_i-z_j},\quad i=1,\dots,n,
    $$
	where $\Omega^{(i,j)}$ denotes the Casimir operator acting on the $i$-th and $j$-th factors of $\bs M$. The operators $\cH_1(\bm z),\dots,\cH_n(\bm z)$ are called the \emph{Gaudin Hamiltonians} associated with $\bs M$. 
    
    Denote by $B_2(x;\bs M,\bm z)\in \End(\bs M)$ the corresponding linear operator obtained from the action of $B_2(x)$ on $\bigotimes_{s=1}^n M_s(z_s)$. Using \eqref{eq:omega}, one obtains
    \[
    \mc H_i(\bm z)=-\mathrm{Res}_{x=z_i} B_2(x;\bs M,\bm z),\qquad i=1,\dots,n.
    \]
    In particular, the Gaudin Hamiltonians commute, $[\cH_i(\bm z),\cH_j(\bm z)]=0$ for all $i,j$. Moreover, the Gaudin Hamiltonians commute with the action of $\g$, $[\cH_i(\bm z),g]=0$ for all $i$ and $g\in \g$. Hence for any $\mu\in\h^*$, the Gaudin Hamiltonians preserve the subspace $ (\bs M)_\mu^\sing\subset \bs M$ of singular vectors of weight $\mu$.  
    
	\section{Completeness of Bethe ansatz of  $V_{\la}\otimes V_{\omega_2}^{\otimes k}$}\label{sec n=2}
	\subsection{Bethe ansatz}\label{sec:Bethe ansatz}
	Fix a sequence of weights $\bLa=(\la^{(1)},\dots,\la^{(n)})$, $\la^{(s)}\in\h^*$, and a pair of non-negative integers $\bm{l}=(l_1,l_2)$. Set $l=l_1+l_2$. Denote $\La=\la^{(1)}+\dots+\la^{(n)}$ and $\alpha(\bm l)=l_1\alpha_1+l_2\alpha_2$. Let $\bm z=(z_1,\dots,z_n)$ be a sequence of distinct complex numbers.
	
	The \emph{Bethe ansatz equation associated to $\bm \La, \bm z,\bm l$} is the system of algebraic equations for complex variables $\bm t=(t_1^{(1)},\dots,t_{l_1}^{(1)};t_1^{(2)},\dots,t_{l_2}^{(2)})$:
	\beq\label{eq:bae.a}
	-\sum_{s=1}^n\frac{( \la^{(s)},\alpha_1)}{t_i^{(1)}-z_s}-\sum_{k=1}^{l_2}\frac{3}{t_i^{(1)}-t_k^{(2)}}+\sum_{k=1,k\ne i}^{l_1}\frac{6}{t_i^{(1)}-t_k^{(1)}}=0,\quad i=1,\dots,l_1,
	\eeq
	\beq\label{eq:bae.b}
	-\sum_{s=1}^n\frac{( \la^{(s)},\alpha_2)}{t_i^{(2)}-z_s}-\sum_{k=1}^{l_1}\frac{3}{t_i^{(2)}-t_k^{(1)}}+\sum_{k=1,k\ne i}^{l_2}\frac{2}{t_i^{(2)}-t_k^{(2)}}=0,\quad i=1,\dots,l_2.
	\eeq
	
	By definition, if $\bm t$ is a solution of the Bethe ansatz equation, then all $t_i^{(j)}$ are distinct. Also if $(\la^{(s)},\alpha_j)\ne0$ for some $j,s$, then $t_i^{(j)}\ne z_s$ for $i=1,\dots,l_j$.
	
	Let $\mathfrak S_m$ be the permutation group of the set $\{1,\dots,m\}$. The groups $\mathfrak S_{l_j}$, $j=1,2$, act on the set of solutions of the Bethe ansatz equation associated to $\bm \La, \bm z,\bm l$ by permuting the coordinates with the same upper index $(j)$. Hence $\mathfrak S_{\bm l}:=\mathfrak S_{l_1}\times \mathfrak S_{l_2}$ acts freely on the set of solutions of the Bethe ansatz equation associated to $\bm \La, \bm z,\bm l$. In what follows we do not distinguish between solutions in the same $\mathfrak S_{\bm l}$-orbit.
	
	\medskip
	
	Following \cite{SV}, there exists a well-defined rational map
	\[\psi:\C^n\times\C^{l}\to (V_{\bm\La})_{\La-\alpha(\bm l)}\]\
	called \emph{the canonical weight function}.
	
	Let $\bm t\in\C^l$ be a solution of the Bethe ansatz equation associated to $\bm \La, \bm z,\bm l$. Then the value of the weight function $\psi(\bm z,\bm t)\in (V_{\bm\La})_{\La-\alpha(\bm l)}$ is called the {\it Bethe vector}. The Bethe vector does not depend on a choice of the representative in the $\mathfrak S_{\bm l}$-orbit of solutions.
	
	It is known from Lemma 2.1 of \cite{MV2} that if the weight $\La-\alpha(\bs l)$ is dominant integral, then the set of solutions of the Bethe ansatz equation is finite. 
	
	Assume that $\La-\alpha(\bs l)$ and all $\la^{(s)}$ are dominant integral $\g$-weights.
    The following facts about Bethe vectors are known.  	
	\begin{thm}[\cite{RV}]\label{thm:bveigen}
		The Bethe vector $\psi(\bm z,\bm t)$ is singular,
		$\psi(\bm z,\bm t)\in (V_{\bm\La})_{\La-\alpha(\bm l)}^\sing$. Moreover, $\psi(\bm z,\bm t)$ is a common eigenvector of the Gaudin Hamiltonians.\qed
	\end{thm}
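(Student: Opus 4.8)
The plan is to deduce both assertions from the behaviour of the Schechtman--Varchenko weight function at critical points of an associated master function. Introduce the master function
\[
\Phi(\bm t,\bm z)=\sum_{s<s'}(\la^{(s)},\la^{(s')})\ln(z_s-z_{s'})-\sum_{s,i,j}(\alpha_j,\la^{(s)})\ln(t_i^{(j)}-z_s)+\sum_{(i,j)<(k,m)}(\alpha_j,\alpha_m)\ln(t_i^{(j)}-t_k^{(m)}),
\]
where the last sum runs over unordered pairs of distinct Bethe roots. A direct differentiation, using $(\alpha_1,\alpha_1)=6$, $(\alpha_1,\alpha_2)=-3$, $(\alpha_2,\alpha_2)=2$, shows that the equations $\partial\Phi/\partial t_i^{(j)}=0$ are exactly the Bethe ansatz equations \eqref{eq:bae.a}--\eqref{eq:bae.b}. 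Thus a solution $\bm t$ of the Bethe ansatz equations is precisely a critical point of $\Phi$ in the variables $\bm t$, and the task reduces to showing that the weight function evaluated at such a critical point is singular and is a common eigenvector of the $\cH_i(\bm z)$.

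First I would prove singularity. Recall that $\psi(\bm z,\bm t)$ is, by construction, a sum over the ways of distributing the lowering operators $f_1$ (for the $l_1$ variables of colour $1$) and $f_2$ (for the $l_2$ variables of colour $2$) among the tensor factors $V_{\la^{(1)}},\dots,V_{\la^{(n)}}$, each term weighted by an explicit rational function of $\bm z,\bm t$. Since $\n_+$ is generated by $e_1,e_2$, it suffices to show $e_j\,\psi(\bm z,\bm t)=0$ for $j=1,2$. The central step is an identity of the form
\[
e_j\,\psi(\bm z,\bm t)=\sum_{i=1}^{l_j}\frac{\partial\Phi}{\partial t_i^{(j)}}\,\psi^{(j,i)}(\bm z,\bm t),
\]
where each $\psi^{(j,i)}$ is itself a weight function, of weight $\La-\alpha(\bm l)+\alpha_j$, in the remaining variables. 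One proves this by commuting $e_j$ through each term of $\psi$ using $[e_j,f_k]=\delta_{j,k}\calpha_j$ and the Chevalley relations, and then reorganising the resulting rational coefficients; the cancellations are governed by partial-fraction (``resonance'') identities. Evaluating at a solution of the Bethe ansatz equations kills every coefficient $\partial\Phi/\partial t_i^{(j)}$, whence $e_j\,\psi(\bm z,\bm t)=0$ and $\psi(\bm z,\bm t)\in(V_{\bLa})^\sing_{\La-\alpha(\bm l)}$.

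For the eigenvector property I would compute the action of $\cH_i(\bm z)=\sum_{j\ne i}\Omega^{(i,j)}/(z_i-z_j)$ on $\psi$. Writing the Casimir as $\Omega=\sum_a x_a\otimes x_a$ over an orthonormal basis and using the explicit form of $\psi$, one obtains an identity of the form
\[
\cH_i(\bm z)\,\psi(\bm z,\bm t)=\frac{\partial\Phi}{\partial z_i}\,\psi(\bm z,\bm t)+\sum_{j=1}^2\sum_{k=1}^{l_j}\frac{\partial\Phi}{\partial t_k^{(j)}}\,\Xi^{(i)}_{j,k}(\bm z,\bm t),
\]
for suitable $(V_{\bLa})_{\La-\alpha(\bm l)}$-valued rational functions $\Xi^{(i)}_{j,k}$. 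At a solution of the Bethe ansatz equations the correction sum vanishes, so $\psi(\bm z,\bm t)$ is a common eigenvector of $\cH_1(\bm z),\dots,\cH_n(\bm z)$ with eigenvalue $\partial\Phi/\partial z_i$. Combined with the previous paragraph, this yields the theorem. As a consistency check, this eigenvalue matches the normalisation $\cH_i(\bm z)=-\mathrm{Res}_{x=z_i}B_2(x;\bs M,\bm z)$ recorded in Section \ref{sec Gaudin}.

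I expect the main obstacle to be the two displayed identities, and especially the $e_j$-identity, whose proof requires an honest combinatorial analysis of how the raising operator redistributes the lowering operators across tensor factors, together with a term-by-term matching of rational coefficients against $\partial\Phi/\partial t_i^{(j)}$. For type G$_2$ the bookkeeping is heavier than in type A, because the off-diagonal Cartan entries $a_{1,2}=-1$, $a_{2,1}=-3$ force longer strings of $f_2$'s relative to $f_1$'s and hence more terms in each distribution; nonetheless the structure of the argument is uniform in the Lie type, the only type-dependent inputs being the inner products $(\alpha_j,\alpha_m)$ and the commutation relations among the $e_j$ and $f_k$. This is precisely the content of \cite{RV} (building on \cite{SV}), so it is legitimate to invoke it here as a citation; the sketch above indicates how the proof is reconstructed in the present conventions.
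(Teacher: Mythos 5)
Your proposal is correct and coincides with the paper's treatment: the paper offers no independent argument but simply cites \cite{RV}, and your sketch — identifying the Bethe ansatz equations \eqref{eq:bae.a}--\eqref{eq:bae.b} as the critical point equations of the master function $\Phi$, killing $e_j\,\psi$ via the identity expressing it through $\partial\Phi/\partial t_i^{(j)}$, and obtaining the eigenvalue $\partial\Phi/\partial z_i$ for $\cH_i(\bm z)$ — is precisely the argument of \cite{RV} (built on \cite{SV}), as you yourself note. Nothing further is needed.
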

	\begin{thm}[\cite{FFRe}]
		The Bethe vector $\psi(\bm z,\bm t)$ is a common eigenvector of $\mc B$.\qed
	\end{thm}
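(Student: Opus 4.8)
The plan is to realize $\mc B$ as the image of the Feigin--Frenkel center at the critical level and to compute its action on $\psi(\bm z,\bm t)$ through the free-field (Wakimoto) realization, as in \cite{FFRe}. Theorem \ref{thm:bveigen} already gives that $\psi(\bm z,\bm t)$ is singular and an eigenvector of the quadratic Hamiltonians $\mc H_i(\bm z)$; the task is to upgrade this to all of $\mc B$ and, in the process, to identify the eigenvalues. I would first recall that the center $\mathfrak z(\hat\g)$ of the completed enveloping algebra of $\hat\g$ at the critical level is a commutative algebra which, under the Feigin--Frenkel isomorphism, is identified with the classical $\mc W$-algebra of the Langlands dual ${}^t\g$. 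Since $G_2$ is self-dual, ${}^t\g\cong\g$, and the fundamental $\mc W$-generators have degrees $2$ and $6$ (the exponents $1,5$ of $G_2$ increased by one). Applying the homomorphism $\mathfrak z(\hat\g)\to\mc U(\g[t])$ that sends the Segal--Sugawara currents to their Gaudin counterparts $g\mapsto\sum_s g^{(s)}/(x-z_s)$ produces precisely the series $S_2(x)$ and $S_6(x)$, hence the generators of $\mc B$ by Lemma \ref{B2 and B6}. It therefore suffices to diagonalize the action of the full center on the Bethe vector.

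Second, I would pass to the Wakimoto realization. Each evaluation module $V_{\la^{(s)}}(z_s)$ is realized through Wakimoto modules $W_{\la^{(s)}}(z_s)$, in which $\g[t]$ acts via a $\beta\gamma$-system and a free bosonic field. The decisive property is that on a highest-weight vector of a Wakimoto module the center acts by an explicit scalar: the currents $S_2(x),S_6(x)$ act by the coefficients of a ${}^t\g$-oper with regular singularities at $z_1,\dots,z_n$ whose residues are fixed by the weights $\la^{(s)}$. In the $7$-dimensional representation this oper is exactly the scalar order-$7$ operator $\mc D_{\psi(\bm z,\bm t)}$ of \eqref{scalar diff op}, now carrying additional apparent singularities at the points $t_i^{(j)}$.

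Third, I would locate the Bethe vector inside the tensor product of Wakimoto modules. Up to the embedding $V_{\bm\La}\hookrightarrow\bigotimes_s W_{\la^{(s)}}$, the weight function $\psi(\bm z,\bm t)$ is a sum of products of screening currents evaluated at the $t_i^{(j)}$ applied to the tensor product of highest-weight vectors, where the variables of upper index $(j)$ correspond to the screening for the simple root $\alpha_j$. The center commutes with the screening charges, so its action on $\psi(\bm z,\bm t)$ reduces to its action on the highest-weight vectors, up to boundary terms produced when moving the currents past the screenings. The Bethe ansatz equations \eqref{eq:bae.a}--\eqref{eq:bae.b} are exactly the residue (no-monodromy) conditions ensuring that the associated oper is regular at the apparent singularities $t_i^{(j)}$; equivalently, they force these boundary terms to have vanishing residues and cancel, leaving $\psi(\bm z,\bm t)$ an eigenvector whose eigenvalues are read off as the coefficients of the oper $\mc D_{\psi(\bm z,\bm t)}$.

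The main obstacle is the third step for the higher current $S_6(x)$. The cancellation for the quadratic Hamiltonians is classical, but the degree-$6$ generator is a complicated normal-ordered differential polynomial in the free fields, and one must verify that the vanishing of its non-scalar part is enforced by exactly the same equations \eqref{eq:bae.a}--\eqref{eq:bae.b}. This requires the full Feigin--Frenkel description of the center together with careful vertex-algebra computations, and it is precisely the structural input that makes the higher Gaudin Hamiltonians, and not only the quadratic ones, diagonal on Bethe vectors. I note that in the generic situation where the quadratic Hamiltonians already have simple joint spectrum on $(V_{\bm\La})^{\sing}_{\La-\alpha(\bm l)}$, commutativity of $\mc B$ yields the eigenvector property for free; but this shortcut neither covers all cases nor produces the eigenvalues, which is why the center-theoretic argument is needed.
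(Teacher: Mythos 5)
Your proposal is correct and takes essentially the same route as the paper, which gives no independent argument but simply cites \cite{FFRe}: realizing $\mc B$ through the Feigin--Frenkel center at the critical level, using the scalar action of the center on Wakimoto modules, and constructing the Bethe vector from screening currents, with the equations \eqref{eq:bae.a}--\eqref{eq:bae.b} appearing as the condition that the associated Miura oper be regular at the points $t_i^{(j)}$, is precisely the argument of the cited reference. One refinement: your concern about the degree-$6$ generator is unnecessary, since the Feigin--Frenkel theorem characterizes the center as the intersection of the kernels of the screening operators, so commutation with the screenings holds uniformly for all of $\mc B$ and no generator-by-generator verification beyond the same Bethe ansatz equations is needed.
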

	\begin{thm}[\cite{V}]\label{non-zero}
The Bethe vector $\psi(\bm z,\bm t)$ is nonzero.\qed
\end{thm}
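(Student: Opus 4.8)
The plan is to reinterpret a solution of the Bethe ansatz equations as a critical point of a \emph{master function} and then to detect the nonvanishing of the associated Bethe vector through a nondegenerate bilinear form. Set
\[
\Phi(\bm t,\bm z)=-\sum_{j=1}^{2}\sum_{s=1}^{n}\sum_{i=1}^{l_j}(\la^{(s)},\alpha_j)\log\big(t_i^{(j)}-z_s\big)+\sum_{\{(j,i),(j',i')\}}(\alpha_j,\alpha_{j'})\log\big(t_i^{(j)}-t_{i'}^{(j')}\big),
\]
where the second sum runs over all unordered pairs of distinct colored variables $t_i^{(j)}$. Using $(\alpha_1,\alpha_1)=6$, $(\alpha_1,\alpha_2)=-3$ and $(\alpha_2,\alpha_2)=2$, a direct computation shows that the system $\partial\Phi/\partial t_i^{(j)}=0$ is exactly \eqref{eq:bae.a}--\eqref{eq:bae.b}. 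Hence a solution $\bm t$ of the Bethe ansatz equation is precisely a critical point of $\Phi(\,\cdot\,,\bm z)$, and all the factors $t_i^{(j)}-z_s$ and $t_i^{(j)}-t_{i'}^{(j')}$ occurring in $\Phi$ are nonzero by the distinctness built into the definition of a solution.

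Next I would introduce the contravariant (Shapovalov) form $S(\,\cdot\,,\,\cdot\,)$ on $V_{\bm\La}$, namely the tensor product of the Shapovalov forms of the irreducible factors; it is symmetric, nondegenerate on each weight space, and satisfies $S(e_p u,w)=S(u,f_p w)$ for $p=1,2$. The heart of the argument is the norm formula for the weight function of \cite{SV}: evaluating $S\big(\psi(\bm z,\bm t),\psi(\bm z,\bm t)\big)$ at a critical point and simplifying by means of the critical point equations yields
\[
S\big(\psi(\bm z,\bm t),\psi(\bm z,\bm t)\big)=c(\bm z,\bm t)\,\det\mathrm{Hess}_{\bm t}\,\Phi(\bm t,\bm z),
\]
where $c(\bm z,\bm t)$ is an explicit product of the factors $t_i^{(j)}-z_s$ and $t_i^{(j)}-t_{i'}^{(j')}$ noted above, hence nonzero at a solution. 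Establishing this identity is the main computational step: one expands $S(\psi,\psi)$ as a double sum indexed by the combinatorial data of the two copies of the weight function, groups the resulting rational coefficients, and recognizes the sum, after imposing $\partial\Phi/\partial t_i^{(j)}=0$, as the Hessian determinant. Granting it, whenever the critical point is \emph{nondegenerate} we get $\det\mathrm{Hess}_{\bm t}\Phi\neq 0$, so $S(\psi,\psi)\neq 0$ and therefore $\psi(\bm z,\bm t)\neq 0$.

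The main obstacle is the degenerate case $\det\mathrm{Hess}_{\bm t}\Phi=0$, where the norm formula is silent because a nonzero vector may be isotropic for $S$. To treat it I would deform the highest weights, replacing each $V_{\la^{(s)}}$ by a Verma module $M_{\la^{(s)}+\varepsilon\mu^{(s)}}$ and $\Phi$ by the corresponding family $\Phi_\varepsilon$; for generic values of the deformation all critical points of $\Phi_\varepsilon$ are nondegenerate, so the preceding paragraph gives nonvanishing there. The weight function $\psi$, the form $S$ and the critical points depend algebraically on these parameters, and one would argue that a critical point of $\Phi$ extends to an analytic branch of critical points of $\Phi_\varepsilon$ along which $\psi$ cannot acquire a zero; equivalently, one passes to the hyperplane-arrangement description of \cite{SV}, where the value of the weight function at any critical point is identified with a canonical (co)homology class whose nonvanishing is independent of the Hessian. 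This degenerate case, rather than the norm computation, is where the real work lies, and it is exactly what is settled in \cite{V}.
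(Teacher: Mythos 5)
The paper does not actually prove this statement: Theorem \ref{non-zero} is imported verbatim from \cite{V} (note the \qed in the statement), so the only meaningful comparison is with Varchenko's proof there. Your first two steps are correct and match the standard route. The identification of solutions of \eqref{eq:bae.a}--\eqref{eq:bae.b} with critical points of the master function $\Phi$ is right (the scalar products $6$, $-3$, $2$ from Section \ref{sec sla} do reproduce the coefficients), and your nondegenerate case is sound: the identity $S\big(\psi(\bm z,\bm t),\psi(\bm z,\bm t)\big)=c(\bm z,\bm t)\det\mathrm{Hess}_{\bm t}\,\Phi$ with an explicit nonvanishing factor $c$ is precisely the Mukhin--Varchenko norm formula, which is \cite{MV1} in this paper's bibliography; granting that reference, a nondegenerate critical point gives $S(\psi,\psi)\neq 0$ and hence $\psi\neq 0$.

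The genuine gap is your degenerate case, and you half-concede it. The deformation argument fails as stated: nonvanishing is not a closed condition, so even if a degenerate critical point of $\Phi$ lies on an analytic branch of nondegenerate critical points of $\Phi_\varepsilon$ along which the Bethe vectors are nonzero, the limiting vector may perfectly well vanish --- ``$\psi$ cannot acquire a zero'' is exactly the assertion to be proved, not an available tool (and the claim that generic deformations have only nondegenerate critical points is itself unproved). Your alternative gloss is also off target: the value of the weight function at a critical point is not identified in \cite{SV} with a Hessian-independent cohomology class. What Varchenko actually does in \cite{V} is local-algebraic: for an \emph{isolated} critical point $p$ (isolation holds here by Lemma 2.1 of \cite{MV2}, quoted in Section \ref{sec:Bethe ansatz}), he builds a linear map from the local algebra $A_p$ of the critical point to the relevant weight space, proves it injective using the nondegeneracy of the Grothendieck residue pairing on $A_p$, and observes that $\psi(\bm z,\bm t)$ is the image of the class of $1\neq 0$ in $A_p$; no limiting argument is involved, and the Hessian (the determinant of the residue form in the nondegenerate case) plays no role. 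Since your final sentence defers this step to \cite{V} anyway, your proposal ultimately reduces to the same citation the paper makes, preceded by a correct but logically redundant treatment of the nondegenerate case.
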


	\subsection{Reproduction procedure}
	Introduce a pair of polynomials  $\bm y=(y_1(x),y_2(x))$ in a variable $x$ by the formula
	\[
	y_j(x)=\prod_{i=1}^{l_j}(x-t_i^{(j)}),\quad j=1,2.
	\]
	We say that the pair of polynomials $\bm y$ \emph{represents $\bs t$}.
	Note that the pair $\bm y$ does not depend on a choice of the representative in the $\mathfrak S_{\bs l}$-orbit of $\bm t$. Our choice is such that $y_j$ are monic polynomials, however, we often consider $y_j$ up to multiplication by a nonzero number, that is we regard $\bs y$ as an element in $(\bP\C[x])^{2}$.
	
	Introduce functions
	\beq\label{eq poly T}
	\mc T_j(x)=\prod_{s=1}^n(x-z_s)^{\langle\la^{(s)},\calpha_j\rangle},\quad j=1,2.
	\eeq
	
	We say that a given pair of polynomials $\bm y$ is \emph{generic with respect to $\bs \La, \bs z$} if the following conditions are met:
	\begin{enumerate}
		\item the polynomials $y_j(x)$ have no multiple roots;
		\item the roots of $y_j(x)$ are different from the roots and singularities of the function $\mc T_j$; 
		\item the polynomials $y_1(x)$, $y_2(x)$ have no common roots.
	\end{enumerate}
	
	If $\bm y$ represents a solution of the Bethe ansatz equation associated to $\bm \La$, $\bm z$, $\bm l$, then $\bm y$ is generic with respect to $\bm \La, \bm z$.
	
	For $g_1(x),\dots,g_m(x)\in \C[x]$, denote by $\Wr(g_1(x),\dots,g_m(x))$ the \emph{Wronskian},
	\[
	\Wr(g_1(x),\dots,g_m(x))=\det(d^{i-1}g_j/dx^{i-1})_{i,j=1}^m.
	\]
	
	\begin{thm}[\cite{MV4}]\label{thm:reproducation}
		Assume that $\bs z\in\C^n$ has distinct coordinates and $z_1=0$. Assume that $\la^{(i)}\in\mathcal P^+$, $i=2,\dots,n$.
		A generic pair $\bm y=(y_1,y_2)$ represents a solution of the Bethe ansatz equation associated to $\bs \La,\bs z,\bs l$ if and only if there exist polynomials $\tilde y_1,\tl y_2$ satisfying
		\beq\label{3}
		\Wr(y_1,x^{\langle \la^{(1)}+\rho,\calpha_1\rangle} \tilde y_1)=\mc T_1y_2,\quad 	
		\Wr(y_2,x^{\langle \la^{(1)}+\rho,\calpha_2\rangle} \tilde y_2)=\mc T_2y_1^3.
		\eeq
		Let $\bm y^{(1)}=(\tl y_1,y_2)$ and $\bm y^{(2)}=( y_1,\tl y_2)$. If for some $i\in\{1,2\}$, the pair $\bm y^{(i)}$ is generic, then it represents a solution of the Bethe ansatz equation associated to data $(s_i\cdot\la^{(1)},\la^{(2)},\dots,\la^{(n)}), \bs z, \bm{l_i}$, where $\bm{l_i}$ is determined by the equation $$\La-\la^{(1)}-\alpha(\bm{l_i})=s_i(\La-\la^{(1)}-\alpha(\bs l)).$$
		\hfill$\square$
	\end{thm}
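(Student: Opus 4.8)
The plan is to treat the statement as two independent \emph{one-direction reproductions}, one for each simple root $\alpha_j$, $j=1,2$, following the $\mathfrak{sl}_2$-reduction method of \cite{MV4}. The first step is to rewrite the Bethe ansatz equations \eqref{eq:bae.a}, \eqref{eq:bae.b} as vanishing-residue conditions. Dividing \eqref{eq:bae.a} by $(\alpha_1,\alpha_1)=6$ and \eqref{eq:bae.b} by $(\alpha_2,\alpha_2)=2$ puts them into the standard Cartan-normalized form; using $\langle\alpha_1,\calpha_1\rangle=2$, $\langle\alpha_2,\calpha_1\rangle=-1$ and $\langle\alpha_1,\calpha_2\rangle=-3$, $\langle\alpha_2,\calpha_2\rangle=2$, I would check by a direct logarithmic-derivative computation that \eqref{eq:bae.a} holds at a root $t_i^{(1)}$ of $y_1$ if and only if $\Res_{x=t_i^{(1)}}\big(\mc T_1 y_2/y_1^2\big)\,dx=0$, and likewise that \eqref{eq:bae.b} holds at $t_i^{(2)}$ if and only if $\Res_{x=t_i^{(2)}}\big(\mc T_2 y_1^3/y_2^2\big)\,dx=0$. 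Here the exponent $1$ on $y_2$ is $-\langle\alpha_2,\calpha_1\rangle$ and the exponent $3$ on $y_1$ is $-\langle\alpha_1,\calpha_2\rangle$, which is precisely why the cube appears on the right-hand side of the second equation in \eqref{3}.

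The core of the argument is then the equivalence between these residue conditions and the existence of $\tilde y_1,\tilde y_2$. I would carry out the direction $j=1$ in detail, the case $j=2$ being identical. Set $a=\langle\la^{(1)}+\rho,\calpha_1\rangle$, so that $a-1=\langle\la^{(1)},\calpha_1\rangle$ because $\langle\rho,\calpha_1\rangle=1$, and factor $\mc T_1=x^{a-1}\mc T_1^{\circ}$ with $\mc T_1^{\circ}=\prod_{s\ge 2}(x-z_s)^{\langle\la^{(s)},\calpha_1\rangle}$ a genuine polynomial (this is where dominance of $\la^{(s)}$, $s\ge 2$, enters). A short computation gives $\Wr(y_1,x^a\tilde y_1)=x^{a-1}\big(a\,y_1\tilde y_1+x\,\Wr(y_1,\tilde y_1)\big)$, so the first equation of \eqref{3} is equivalent to the polynomial identity $a\,y_1\tilde y_1+x\,\Wr(y_1,\tilde y_1)=\mc T_1^{\circ}y_2$, a first-order linear inhomogeneous ODE for $\tilde y_1$. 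Solving the associated second-order equation with Wronskian $\mc T_1 y_2$ by reduction of order, its second solution is $x^a\tilde y_1=y_1\int (\mc T_1 y_2/y_1^2)\,dx$. By genericity the integrand has double poles exactly at the roots $t_i^{(1)}$ and is regular at $0$ and at the $z_s$; hence the antiderivative is logarithm-free — equivalently $\tilde y_1$ is a polynomial — precisely when every residue at $t_i^{(1)}$ vanishes, which by the first step is \eqref{eq:bae.a}. Integrating the $x^{a-1}$ behaviour at the origin (using $a\neq0$) produces the prescribed power $x^{\langle\la^{(1)}+\rho,\calpha_1\rangle}$, and the analogous computation with $y_1^3$, $\mc T_2$ and $a=\langle\la^{(1)}+\rho,\calpha_2\rangle$ settles $j=2$.

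For the reproduction statement I would exploit the symmetry of \eqref{3} under interchanging the two solutions. Differentiating directly, $\Wr(\tilde y_1,x^{-a}y_1)=-x^{-a-1}\mc T_1^{\circ}y_2$, and since $\langle s_1\cdot\la^{(1)}+\rho,\calpha_1\rangle=\langle s_1(\la^{(1)}+\rho),\calpha_1\rangle=-\langle\la^{(1)}+\rho,\calpha_1\rangle=-a$ and $\langle s_1\cdot\la^{(1)},\calpha_1\rangle=-a-1$, the right-hand side equals $-\widetilde{\mc T}_1 y_2$, where $\widetilde{\mc T}_1$ is $\mc T_1$ with $\la^{(1)}$ replaced by $s_1\cdot\la^{(1)}$. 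Thus $\bm y^{(1)}=(\tilde y_1,y_2)$ satisfies the first equation of \eqref{3} for the reflected data, with second solution $x^{-a}y_1$; invoking the equivalence of the previous paragraph in the reverse direction, $\bm y^{(1)}$ represents a solution of the Bethe ansatz equation for $(s_1\cdot\la^{(1)},\la^{(2)},\dots,\la^{(n)})$ as soon as it is generic. A degree count in $a\,y_1\tilde y_1+x\,\Wr(y_1,\tilde y_1)=\mc T_1^{\circ}y_2$ gives $\deg\tilde y_1=\sum_{s\ge 2}\langle\la^{(s)},\calpha_1\rangle+l_2-l_1$, which matches the $\alpha_1$-coefficient of $s_1(\La-\la^{(1)}-\alpha(\bm l))$ and hence pins down $\bm{l_1}$ through $\La-\la^{(1)}-\alpha(\bm{l_1})=s_1(\La-\la^{(1)}-\alpha(\bm l))$; the case $j=2$ is symmetric.

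The step I expect to be the main obstacle is the careful bookkeeping in the reduction-of-order argument: showing that the \emph{only} obstruction to $x^a\tilde y_1=y_1\int(\mc T_1 y_2/y_1^2)\,dx$ being a fixed power of $x$ times a polynomial is the vanishing of the residues at the $t_i^{(1)}$, and that the resulting $\tilde y_1$ is genuinely a polynomial of the asserted degree, with no accidental cancellation of leading coefficients and no spurious logarithmic or fractional-power terms at $0$ or at the $z_s$. This is exactly where all three genericity hypotheses on $\bm y$ together with the integrality furnished by $\la^{(s)}\in\mc P^+$, $s\ge 2$, are used, and it is the technical heart of \cite{MV4} specialized to the Cartan data of G$_2$.
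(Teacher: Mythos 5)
Your overall route is the right one, and it is the same mechanism as the source the paper cites: the paper itself gives no proof of this theorem (it is quoted from \cite{MV4} with a \,$\square$), and the argument there is exactly your chain of residue reformulation of \eqref{eq:bae.a}--\eqref{eq:bae.b}, reduction of order with a logarithm-free primitive, and the reflection symmetry of the Wronskian identity; your normalization of the equations (dividing by $(\alpha_j,\alpha_j)/2$, exponents $-\langle\alpha_2,\calpha_1\rangle=1$ and $-\langle\alpha_1,\calpha_2\rangle=3$), your identity $\Wr(y_1,x^a\tl y_1)=x^{a-1}\bigl(a\,y_1\tl y_1+x\,\Wr(y_1,\tl y_1)\bigr)$, and your degree count pinning down $\bm{l_i}$ are all correct. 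However, there is a genuine gap exactly at the point you flag as ``the main obstacle,'' namely the special point $z_1=0$. You claim that by genericity the integrand $\mc T_1y_2/y_1^2$ ``is regular at $0$ and at the $z_s$.'' Regularity at $z_s$, $s\gge 2$, does follow from dominance; but $\la^{(1)}$ is \emph{not} assumed dominant --- only $\la^{(i)}$, $i=2,\dots,n$, are --- and the entire point of the quasi-polynomial factor $x^{\langle\la^{(1)}+\rho,\calpha_j\rangle}$ is to allow $\langle\la^{(1)},\calpha_j\rangle<0$. This is moreover the case the paper actually uses: in Section \ref{sec strategy} the theorem is applied with $\la^{(1)}=\theta=w\cdot\la$, e.g.\ $\theta=s_2\cdot\la$ gives $a=\langle\theta+\rho,\calpha_2\rangle=-\la_2-1<0$. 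When $a\lle 0$ the integrand is $x^{a-1}h(x)$ with $h$ regular at $0$, so it has a pole at $0$ of order $1-a\gge 1$, and logarithm-freeness of the primitive requires, \emph{in addition} to the Bethe equations at the $t_i^{(j)}$, the vanishing of the residue at $0$ (the $(-a)$-th Taylor coefficient of $h$), which is not among \eqref{eq:bae.a}--\eqref{eq:bae.b} and does not follow from the three genericity conditions. Your parenthetical ``using $a\neq0$'' is likewise unjustified for arbitrary $\la^{(1)}\in\mc P$ (in the paper's applications $a\neq 0$ only because $\la+\rho$ is regular, so $\langle w(\la+\rho),\calpha_j\rangle\neq0$). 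Dealing with this residue at the special point, via the exponent bookkeeping of the quasi-polynomial formalism, is the actual technical heart of \cite{MV4} and is missing from your write-up.

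A second, smaller but real, omission is in the reproduction step. The criterion you established is an ``if and only if'' requiring the existence of \emph{both} polynomials $\tl y_1,\tl y_2$ for the reflected data, but for $\bm y^{(1)}=(\tl y_1,y_2)$ you exhibit only the first Wronskian equation (with second solution $x^{-a}y_1$). That yields the new equations \eqref{eq:bae.a} at the roots of $\tl y_1$, but the new equations \eqref{eq:bae.b} at the roots of $y_2$ must be verified directly: evaluating $\Wr(y_1,x^a\tl y_1)=\mc T_1y_2$ at a root $t$ of $y_2$ gives $a/t+\tl y_1'(t)/\tl y_1(t)=y_1'(t)/y_1(t)$, and combining this with $\langle s_1\cdot\la^{(1)}-\la^{(1)},\calpha_2\rangle=-a\,\langle\alpha_1,\calpha_2\rangle=3a$ shows the new residue condition at $t$ reduces to the old one. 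This is how the cited source closes the loop; as written, your appeal to ``the equivalence in the reverse direction'' does not apply, since only one of the two required tilde-polynomials is in hand.
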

	We say that the pair $\bm y^{(i)}$
	is obtained from the pair $\bm  y$ by {\it the reproduction procedure in the $i$-th direction}.
	
	Note that the reproduction procedure can be iterated. We use the notation $\bm y^{(i)(j)}$ for $\big(\bm y^{(i)}\big)^{(j)}$. We have $\bs y^{(i)(i)}=\bs y$. More generally, it is shown in \cite{MV4}, that the pairs obtained from $\bs y$ by iterating a reproduction procedure are in a bijective correspondence with the elements of the Weyl group of $\g$.
	
	\subsection{Differential operator associated with a Bethe vector} Let $\bLa=(\la^{(1)},\dots,\la^{(n)})$ be a collection of dominant integral $\g$-weights and assume that $\La-\alpha(\bm l)$ is dominant integral. 
	
	Let $\bm y$ represent a solution $\bs t$ of the Bethe ansatz equation associated to $\bm \La, \bm z,\bm l$. Recall the scalar differential operator $\mc D_{\psi(\bm z,\bm t)}$ associated with the corresponding Bethe vector, see \eqref{scalar diff op}. The goal of this section is to describe  $\mc D_{\psi(\bm z,\bm t)}$ in terms of $\bs y$.
	
	Following \cite{BM}, let $\mc D_{\bm y}$ be the monic linear differential operator  given by the formula
	\begin{align}\label{eq dy}
	\mc D_{\bm y}=&\Big(\pa_x-\ln'\Big(\frac{\mc T_1^4\mc T_2^2}{y_1}\Big)\Big)\Big(\pa_x-\ln'\Big(\frac{\mc T_1^3\mc T_2^2y_1}{y_2}\Big)\Big)\Big(\pa_x-\ln'\Big(\frac{\mc T_1^3\mc T_2y_2}{y_1^2}\Big)\Big)\nonumber\\
	& \times\Big(\pa_x-\ln'(\mc T_1^2\mc T_2)\Big)\Big(\pa_x-\ln'\Big(\frac{\mc T_1\mc T_2y_1^2}{y_2}\Big)\Big)\Big(\pa_x-\ln'\Big(\frac{\mc T_1y_2}{y_1}\Big)\Big)\Big(\pa_x-\ln'(y_1)\Big).
	\end{align}
	The kernel of the operator $\mc D_{\bm y}$ is the space spanned by the first coordinates of the G$_2$-population originated at $\bm y$. 
	In particular, the kernel of $\mc D_{\bm y}$ consists of polynomials only. Moreover $\Ker(\mc D_{\bm y})$ is a pure self-self-dual space of polynomials, see the definition of pure self-self-dual space in Section \ref{sec ssd gr} below.
	
	\begin{prop}\label{prop diff oper}
	Let $\bm y$ represent a solution $\bs t$ of the Bethe ansatz equation associated to $\bm \La, \bm z,\bm l$. Then we have $\mc D_{\psi(\bm z,\bm t)}=(\mc T_1^2\mc T_2)^{-1}\cdot\mc D_{\bm y}\cdot (\mc T_1^2\mc T_2)$.
	\end{prop}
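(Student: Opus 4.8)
The plan is to match the two monic order-$7$ operators coefficient by coefficient after a preliminary normalization. Write the seven factors of \eqref{eq dy} as $\pa_x-\ln'g_k$, $k=1,\dots,7$. A direct bookkeeping of exponents gives $\prod_{k=1}^7 g_k=\mc T_1^{14}\mc T_2^7=(\mc T_1^2\mc T_2)^7$, so after conjugation the shifted functions $g_k/(\mc T_1^2\mc T_2)$ have product $1$, and the operator $\widetilde{\mc D}:=(\mc T_1^2\mc T_2)^{-1}\cdot\mc D_{\bm y}\cdot(\mc T_1^2\mc T_2)=\prod_{k=1}^7\big(\pa_x-\ln'(g_k/(\mc T_1^2\mc T_2))\big)$ has vanishing $\pa_x^6$-coefficient, exactly matching the shape of $\mc D^{\mc B}$. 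Thus the conjugating factor $\mc T_1^2\mc T_2$ is singled out as the unique one normalizing the subprincipal symbol. I would also check that the coefficients of $\widetilde{\mc D}$ are rational with poles only at $z_1,\dots,z_n$: the only candidates for extra poles are the roots of $y_1,y_2$, and these are removed precisely by the Bethe ansatz equations \eqref{eq:bae.a}--\eqref{eq:bae.b}; equivalently, since $\Ker\mc D_{\bm y}$ consists of polynomials and $\mc D_{\bm y}$ is invariant along the reproduction procedure, its singular locus cannot depend on $\bm y$ and so equals $\{z_1,\dots,z_n,\infty\}$.

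Next I would cut down the number of identities. By Lemma \ref{B2 and B6} the Bethe algebra is generated by the coefficients of $B_2(x)$ and $B_6(x)$, and the formulas of Section \ref{sec: diffop} express $B_3,B_4,B_5,B_7$ as universal differential polynomials in $S_2,S_6$; hence $\mc D_{\psi(\bm z,\bm t)}$ is a G$_2$-oper determined by the two eigenvalues $s_2(x),s_6(x)$ of $S_2(x),S_6(x)$ on the Bethe vector. On the other side, by \cite{BM} the kernel of $\mc D_{\bm y}$ is a pure self-self-dual space of polynomials, so $\widetilde{\mc D}$ is also a G$_2$-oper and is determined by its degree-$2$ and degree-$6$ coordinates $\hat s_2(x),\hat s_6(x)$. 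The proposition therefore reduces to the two scalar identities $s_2=\hat s_2$ and $s_6=\hat s_6$, i.e. to matching the $\pa_x^5$- and $\pa_x^1$-coefficients of $\widetilde{\mc D}$ against the eigenvalues of $B_2(x)$ and $B_6(x)$.

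The identity $s_2=\hat s_2$ is routine. The eigenvalue of $S_2(x)=\tr\,(G(x)^2)$ on $\psi(\bm z,\bm t)$ is the generating function of the eigenvalues of the quadratic Gaudin Hamiltonians; by \eqref{eq:omega} and the weight-function theory of \cite{RV,SV} it is an explicit rational function of $x$ in $\mc T_1,\mc T_2,y_1,y_2$ and their logarithmic derivatives. Expanding the $\pa_x^5$-coefficient of $\widetilde{\mc D}$ from \eqref{eq dy} and using \eqref{eq:bae.a}--\eqref{eq:bae.b} to cancel the contributions at the roots of $y_j$, I would verify that the two expressions coincide.

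The identity $s_6=\hat s_6$ is the main obstacle, since computing the eigenvalue of $S_6(x)$ directly from its six-fold trace expression is prohibitive. My plan is to sidestep the head-on calculation: invoking \cite{R}, which identifies $\mc B$-eigenvalues with G$_2$-opers, together with the reduction of \cite{MV1} to the two-point case $V_\la(z_1)\otimes V_{\omega_2}(z_2)$, one is left to verify $s_6=\hat s_6$ on the explicit list of solutions computed in Appendix \ref{sec:solutions}. Since both $\hat s_6$ and, by \cite{R}, $s_6$ are G$_2$-oper coordinates regular away from $\{z_1,\dots,z_n,\infty\}$ with singular data fixed by $\bLa$ and $\bm l$, a finite check in the two-point case, propagated through the reproduction procedure and the genericity of $\bm z$, would establish the identity in general. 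The delicate point throughout is to confirm that the residue cancellations at the roots of $y_1,y_2$ are governed exactly by \eqref{eq:bae.a}--\eqref{eq:bae.b}, so that $\widetilde{\mc D}$ is genuinely regular there and the oper comparison is legitimate.
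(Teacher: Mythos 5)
Your reduction step is sound as far as it goes: the bookkeeping $\prod_{k=1}^7 g_k=\mc T_1^{14}\mc T_2^7=(\mc T_1^2\mc T_2)^7$ is correct, so the conjugated operator $\widetilde{\mc D}$ indeed has vanishing $\pa_x^6$-coefficient, and matching $s_2=\hat s_2$ against the quadratic Gaudin eigenvalues is a genuinely routine computation. But the argument breaks at exactly the point you flag as the main obstacle, and in two ways. First, the reduction to \emph{two} scalar identities already presupposes that the coefficients of $\widetilde{\mc D}$ satisfy the \emph{same} universal differential-polynomial relations that define $B_3,B_4,B_5,B_7$ in terms of $S_2,S_6$ in Section \ref{sec: diffop}. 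That $\widetilde{\mc D}$ has a pure self-self-dual kernel does not by itself hand you these specific relations; identifying them is precisely the content of Remark \ref{rem B} and the classical $\mc W$-algebra computation in Section 4 of \cite{MRR} (e.g.\ the relation $8w_7=4w_6'-2w_2^{(5)}-w_2w_2'''-3w_2'w_2''$), so you cannot treat this as a formal bookkeeping step. Second, and more seriously, your proposed proof of $s_6=\hat s_6$ --- verify it on the explicit two-point solutions of Appendix \ref{sec:solutions} and ``propagate'' --- is not a valid mechanism. The solution varieties of the Bethe ansatz equations for general $n$ are positive-dimensional families in which the two-point solutions are not dense in any sense that an identity theorem could exploit; there is no algebraic principle that extends a coefficient identity from the two-point list through the reproduction procedure to arbitrary $\bLa,\bm z,\bm l$. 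And invoking \cite{R} does not close the gap: Rybnikov identifies $\mc B$-eigenvalues with opers \emph{abstractly}, for the Gaudin algebra of \cite{FFRe}; to know that the eigenvalue of the concretely defined series $S_6(x)=\tr\,(G(x)^6)+\cdots$ is the degree-$6$ coordinate of that oper in the normalization matching \eqref{eq dy}, you need the explicit affine Harish-Chandra image of the Segal--Sugawara vectors --- which is again \cite{MRR}.

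This is in fact how the paper proves the proposition: its (very short) proof cites Remark \ref{rem B} together with Section 5 of \cite{MRR} (cf.\ \cite{FFRe}), where the eigenvalue of the universal operator on a Bethe vector is computed once and for all via the Miura transformation, yielding the factorized form \eqref{eq dy} uniformly in $n$, $\bLa$, $\bm z$, $\bm l$ --- no case-by-case verification and no appeal to \cite{R}. Note also that avoiding \cite{R} here matters structurally: Proposition \ref{prop diff oper} feeds (through Lemma \ref{lem ker ssd}) into the proof of Theorem \ref{bi rep ssgr}, which is where \cite{R} is meant to enter; routing \cite{R} into the present proposition would tangle the logical order of the paper even if it is not literally circular. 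To repair your argument you would replace the ``finite check plus propagation'' by the universal Wakimoto/Miura computation, at which point your proof becomes the paper's.
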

	\begin{proof}
		The statement follows from Remark \ref{rem B} and Section 5 of \cite{MRR}, cf. also \cite{FFRe}.
	\end{proof}
	
	We generalize Proposition \ref{prop diff oper} for arbitrary eigenvectors of the Bethe algebra in Theorem \ref{bi rep ssgr} below.
	Now we prove the following lemma.
	
	Let $\bLa=(\la^{(1)},\dots,\la^{(n)})$ be a sequence of dominant integral $\g$-weights and let $\bs z=(z_1,\dots,z_n)$ be a sequence of distinct complex numbers. Let $\la$ be a dominant integral $\g$-weight.
	
	\begin{lem}\label{exponents} 
	Let $v\in (V_{\bLa,\bs z})_\la^{\sing}$ be an eigenvector of the Bethe algebra $\mc B$. Then the operator $\mathcal D_v$ is a monic Fuchsian differential operator with singularities at $z_s$, $s=1,\dots,n$, and $\infty$ only. The exponents of $\mathcal D_v$ at $z_s$ are
	\beq\label{exp at z}
	-2\la_1^{(s)}-\la_2^{(s)},-\la_1^{(s)}-\la_2^{(s)}+1,-\la_1^{(s)}+2,3,\la_1^{(s)}+4,\la_1^{(s)}+\la_2^{(s)}+5,2\la_1^{(s)}+\la_2^{(s)}+6,\eeq 
and the exponents at $\infty$ are 
\beq\label{exp at inf} -2\la_1-\la_2-6,-\la_1-\la_2-5,-\la_1-4,-3,\la_1-2,\la_1+\la_2-1,2\la_1+\la_2.\eeq
	\end{lem}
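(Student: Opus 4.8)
The plan is to reduce the general eigenvector statement to the case of Bethe vectors, where Proposition \ref{prop diff oper} gives an explicit factorization of $\mc D_v$, and then read off the Fuchsian property and exponents directly from that factored form. First I would address the structural claim that $\mc D_v$ is Fuchsian with singularities only at $z_1,\dots,z_n,\infty$. Since $v$ is an eigenvector of $\mc B$, the eigenvalues $h_i(x)$ of $B_i(x)$ are rational functions of $x$; by construction the coefficients $B_i(x)$ act on $V_{\bLa,\bs z}=\bigotimes_s V_{\la^{(s)}}(z_s)$ as power series in $x^{-1}$ whose action is rational with poles only at the evaluation points $z_s$, because each $g(x)$ acts as $g/(x-z_s)$ on the $s$-th factor. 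Hence the only finite singularities of $\mc D_v$ are the $z_s$, and $\infty$; one must then check the Fuchsian condition (poles of $h_i$ at each singular point are of order at most $i$), which follows from the explicit degree bounds on how $B_i(x)$ acts near each $z_s$.

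\textbf{Computing the exponents via Bethe vectors.} The key simplification is that for a Bethe vector $v=\psi(\bs z,\bs t)$, Proposition \ref{prop diff oper} identifies $\mc D_v$ with the conjugate $(\mc T_1^2\mc T_2)^{-1}\mc D_{\bm y}(\mc T_1^2\mc T_2)$, and $\mc D_{\bm y}$ is given by the explicit factorization \eqref{eq dy} into seven first-order factors $\pa_x-\ln'(\phi_k)$. For a product of such factors, the exponents at a point $p$ are computed from the orders of vanishing of the logarithmic-derivative arguments $\phi_k$ at $p$, with the standard correction coming from the noncommutativity of the factors — concretely, after ordering the factors the exponent contributed by the $k$-th factor is $\mathrm{ord}_p(\phi_k)+(k-1)$ appropriately symmetrized. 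At $z_s$ the functions $\mc T_j$ contribute $\langle\la^{(s)},\calpha_j\rangle=\la_j^{(s)}$ to their vanishing orders while generic $y_j$ contribute nothing, and the conjugation by $\mc T_1^2\mc T_2$ shifts every exponent uniformly by $-\mathrm{ord}_{z_s}(\mc T_1^2\mc T_2)=-(2\la_1^{(s)}+\la_2^{(s)})$. Carrying out this bookkeeping for each of the seven factors in \eqref{eq dy} should reproduce exactly the list \eqref{exp at z}; the analogous computation at $\infty$, using the degrees of $\mc T_j$ and the $y_j$ together with the weight $\la=\La-\alpha(\bm l)$, yields \eqref{exp at inf}.

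\textbf{Passing from Bethe vectors to all eigenvectors.} The exponents and the Fuchsian property depend only on the eigenvalues $h_i(x)$, hence only on the local data of $\mc D_v$ at each singular point, and these are polynomial (in fact rational) invariants of the eigenvalue. I would argue that it suffices to verify the exponent formulas on a Zariski-dense or otherwise controlling set of eigenvectors; the Bethe vectors produced by the construction above, together with the reproduction procedure of Theorem \ref{thm:reproducation}, furnish enough eigenvectors that the exponent assignment — being determined by the local expansion of the rational eigenvalues at $z_s$ and $\infty$ — extends to \emph{every} eigenvector in $(V_{\bLa,\bs z})_\la^{\sing}$. The cleanest formulation is that the exponents are determined purely by the weights $\la^{(s)}$ and $\la$ via the general theory relating $\mc B$-eigenvalues to Fuchsian operators; once the formulas hold on Bethe vectors they hold identically, because the set of possible exponent data at a regular singular point is discrete while the eigenvalues vary algebraically.

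\textbf{Main obstacle.} The principal difficulty is the exponent computation for the seven-fold product \eqref{eq dy}: the factors do not commute, so one cannot simply sum the orders of vanishing of the $\phi_k$, and the ordering of the factors matters for the off-integer corrections. I expect the genericity hypotheses on $\bm y$ (no multiple or shared roots, roots away from singularities of $\mc T_j$) to be exactly what is needed to guarantee that at each $z_s$ and at $\infty$ the relevant indicial polynomial factors into distinct linear terms, so that the seven exponents are precisely the shifted orders of the $\phi_k$ with the correct integer offsets. Verifying that the resulting seven numbers at $z_s$ coincide with \eqref{exp at z} (and that they are symmetric about $3$, reflecting the self-self-duality) is the computational heart of the argument; everything else is a matter of carefully tracking the conjugation shift by $\mc T_1^2\mc T_2$ and confirming there are no additional finite singularities.
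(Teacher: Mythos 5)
Your first two steps run parallel to the paper's proof: the paper likewise derives the Fuchsian structure from the rational action of the $B_i(x)$ on $V_{\bLa,\bs z}$, and computes the exponents from the factorization in Proposition \ref{prop diff oper} --- though only for the single simplest Bethe vector, the highest weight vector $v^+$, for which $\bs y=(1,1)$ and all seven factors of \eqref{eq dy} involve only powers of $\mc T_1,\mc T_2$, so none of the genericity bookkeeping you worry about in your ``main obstacle'' paragraph is needed. The genuine gap is your third step, the passage from Bethe vectors to arbitrary eigenvectors. Your density/discreteness argument has nothing to run on: for fixed $\bs z$ the joint spectrum of $\mc B$ on the finite-dimensional space $(V_{\bLa,\bs z})^{\sing}_\la$ is a \emph{finite set}, so eigenvalues do not ``vary algebraically'' within it; and deforming $\bs z$ instead would require both completeness of the Bethe ansatz for arbitrary $\bLa$ at generic $\bs z$ --- which is not available (Theorem \ref{thm G completeness} covers only $V_\la\otimes V_{\omega_2}^{\otimes k}$, and Lemma \ref{exponents} is itself an ingredient in the chain leading to Theorem \ref{bi rep ssgr}, so invoking the oper correspondence here would be circular) --- and an argument that exponents do not jump along eigenvalue branches. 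Nor is ``the set of possible exponent data is discrete'' free: the exponents are roots of the indicial polynomial and vary continuously with the leading coefficients of the $h_i$; their integrality for a general eigenvector is part of what must be proved.

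The paper closes this gap with a pointwise algebraic observation that makes the indicial data eigenvector-independent, with no deformation at all: since $\tr\big((G^{\mathrm{T}})^k\big)$ is central in $\mc U(\g)$, each $B_i(x)$ acts on an evaluation module $V_\mu(z)$ as a \emph{scalar}, $c_i(\mu)/(x-z)^i$. Hence on $V_{\bLa,\bs z}$ the expansion of $B_i(x)$ at $x=z_s$ has the form $c_i(\la^{(s)})(x-z_s)^{-i}\big(1+O(x-z_s)\big)$, so for \emph{every} eigenvector $v$ the leading Laurent coefficient of $h_i$ at $z_s$ --- the only input to the indicial equation --- is the same scalar $c_i(\la^{(s)})$, and the exponents of $\mc D_v$ at $z_s$ coincide with those of $\mc D_{v^+}$, already computed. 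At $\infty$ the leading coefficients $\tilde c_i$ of the $B_i(x)$ are operators independent of $\bs z$, so they may be evaluated on $\big(\bigotimes_{s=1}^n V_{\la^{(s)}}(0)\big)^{\sing}_\la$, where they act as the scalar determined by the highest weight vector of $V_\la(0)$ --- again a Bethe vector with $\bs y=(1,1)$; this is also how the dependence of \eqref{exp at inf} on the weight $\la$ of the singular vector enters. If you want to repair your proposal, this centrality step (or an equivalent proof that the indicial polynomials depend only on $\bLa$ and $\la$, not on the eigenvector) is the missing idea; without it the reduction to Bethe vectors does not go through.
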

	\begin{proof}
	Let $v^+$ be the highest weight vector of $V_{\bLa,\bs z}$, then $v^+$ is a Bethe vector and the corresponding operator $\mathcal D_{v^+}$ corresponds to $\bs y=(1,1)$ in Proposition \ref{prop diff oper}. In this case it is easy to see that the singular points and exponents are as in the lemma. 
	
	Let $\mu$ be a $\g$-weight and $z\in\C$. Then $B_i(x)$, $i=2,\dots,7$, act on the evaluation module $V_\mu(z)$ as $c_i(\mu)/(x-z)^i$, where $c_i(\mu)$ are scalars depending on $\mu$. Indeed, it follows from the standard fact that $\tr((G^{\mathrm{T}})^k)$ are central in $\mc U(\g)$ for all $k\in\Z_{\gge 0}$. Therefore the expansion of the operator $B_i(x)$ acting on $V_{\bLa,\bs z}$ as a function of $x$ around $x=z_s$ has the form $c_i(\la^{(s)})/(x-z_s)^i(1+O(x-z_s))$. It follows that the exponents of $\mathcal D_{v^+}$  and $\mathcal D_v$ at $z_s$ are the same.
	
    To compute the exponents at $x=\infty$, observe that the expansion of  $B_i(x)$ 
    acting on $V_{\bLa,\bs z}$ at $x=\infty$ has the form $\tilde c_i/x^i(1+O(1/x))$ where $\tilde c_i$ are operators which do not depend on $\bs z$. Consider $(\bigotimes_{s=1}^n V_{\la^{(s)}}(0))_{\la}^{\sing}$. Then it is clear that $\tilde c_i$ are scalar operators on this space. Moreover, the value of the scalar is given by the action of $B_i(x)$ on the highest weight vector $\tl v^+\in V_\la(0)$. Note that 
    $\tl v^+$ is a Bethe vector corresponding to  $\bs y=(1,1)$ in Proposition \ref{prop diff oper} and the exponents of $\mc D_{\tl v^+}$ and $\mc D_v$ at $\infty$ are the same, it follows that the exponents of $\mc D_v$ at $\infty$ are as in the lemma. 
	\end{proof}
	
	\subsection{Parameterization of solutions}\label{sec2.6}
	In Sections \ref{sec2.6}, \ref{sec strategy}, \ref{sec G generic}, we work with data $\bs \La=(\la,\omega_2)$, $\bs z=(0,1)$, where $\la\in\cP^+$. The main results of these sections
	are the explicit formulas for the solutions of the Bethe ansatz equation associated to $\bLa$, $\bm z$, $\bm l$, see Theorem \ref{thm G2 generic} and Appendix \ref{sec:solutions}, and the completeness of Bethe ansatz for $V_\la\otimes V_{\omega_2}^{\otimes k}$, see Theorem \ref{thm G completeness}.
	
	Let $\la\in\mathcal P^+$ and recall that $\la_i=\langle \la,\calpha_i\rangle$. We write the decomposition of the finite-dimensional $\g$-module $V_\la\otimes V_{\omega_2}$. We have
	\begin{align}\label{eq:dec of B}
	V_{\la}\otimes V_{\omega_2}=&V_{\la+\omega_2}\oplus V_{\la+\omega_2-\alpha_2}\oplus V_{\la+\omega_2-\alpha_1-\alpha_2}\oplus V_{\la+\omega_2-\alpha_1-2\alpha_2} \nonumber\\& \qquad\qquad\qquad\quad\,\,\oplus V_{\la+\omega_2-\alpha_1-3\alpha_2}\oplus V_{\la+\omega_2-2\alpha_1-3\alpha_2}\oplus V_{\la+\omega_2-2\alpha_1-4\alpha_2}\nonumber\\
	=&V_{(\la_1,\la_2+1)}\oplus V_{(\la_1+1,\la_2-1)}\oplus
	V_{(\la_1-1,\la_2+2)}\oplus V_{(\la_1,\la_2)}\nonumber\\ & \qquad\qquad\qquad\qquad\quad\,\,\,\,\oplus V_{(\la_1+1,\la_2-2)} \oplus V_{(\la_1-1,\la_2+1)}\oplus V_{(\la_1,\la_2-1)},
	\end{align}
	with the convention that the summands with non-dominant highest weights are omitted. In addition, if $\la_2=0$, then the summand $V_{\la+\omega_2-\alpha_1-2\alpha_2}=V_{(\la_1,\la_2)}$ is absent.
	
	Note, in particular, that all multiplicities are 1.
	
	By Theorem \ref{thm:bveigen}, to diagonalize the Gaudin Hamiltonians acting on $V_\la\otimes V_{\omega_2}$, it is sufficient to find a solution of the Bethe ansatz equation \eqref{eq:bae.a}-\eqref{eq:bae.b} associated to $\bs\La,\bs z$ and $\bs l$ corresponding to the summands in the decomposition (\ref{eq:dec of B}).

    We call a pair of integers $\bs l$ {\it admissible} if $V_{\la+\omega_2-\alpha(\bs l)}\subset V_\la\otimes V_{\omega_2}$.	
    Any admissible pair has the form
    \beq\label{adm l}\bm l_0=(0,0),\, \bm l_1=(0,1),\, \bm l_2=(1,1),\, \bm l_3=(1,2),\, \bm l_4=(1,3),\, \bm l_5=(2,3),\, \bm l_6=(2,4).\eeq
	
	\subsection{The explicit $2$-point solutions of the Bethe ansatz equation}\label{sec strategy}
	To solve the Bethe ansatz equation with a given admissible $\bm l$, we use the method of Section 3 in \cite{LMV1}. 
	
	All weights in $V_{\omega_2}$ are in the Weyl orbit of $\omega_2$ with the exception of the weight $\mu=0$. Explicitly, the weights in $V_{\omega_2}$ are
	\[
	\omega_2,\ \ s_2\omega_2,\ \ s_1s_2\omega_2,\ \ 0,\ \ s_2s_1s_2\omega_2,\ \ s_1s_2s_1s_2\omega_2,\ \ s_2s_1s_2s_1s_2\omega_2.
	\]
	The pair $\bm y=(1,1)$ represents the unique solution of the Bethe ansatz equation associated to $(\theta,\omega_2),\bm z,\bm l_0$. Using this solution and Theorem \ref{thm:reproducation}, we construct solutions of the Bethe ansatz equation related to $\bm l_i$, $i\ne 3$, as follows. If $\bm y^{(2)}$ is generic, then it represents a solution of the Bethe ansatz equation associated to $(s_2\cdot \theta,\omega_2),\bm z,\bm l_1$. If $\bm y^{(2)(1)}$ is generic, then it represents a solution of the Bethe ansatz equation associated to $(s_1s_2\cdot \theta,\omega_2),\bm z,\bm l_2$. Continue iterating the reproduction procedure and suppose that it is applicable in every step. Finally,  if $\bm y^{(2)(1)(2)(1)(2)}$ is generic, then it represents a solution of the Bethe ansatz equation associated to $(s_2s_1s_2s_1s_2\cdot \theta,\omega_2),\bm z,\bm l_6$. 
	
	To solve the Bethe ansatz equation associated to $(\la,\omega_2),\bm z,\bm l_i$, we take $\theta$ in the following way,
	\[
	\theta=\la,\quad i=0; \qquad \theta=s_2\cdot\la=(\la_1+\la_2+1,-\la_2-2),\quad i=1; 
	\]
	\[
	\theta=s_2s_1\cdot\la=(2\la_1+\la_2+2,-3\la_1-\la_2-5),\quad i=2;
	\]
	\[
	\theta=s_2s_1s_2\cdot\la=(2\la_1+\la_2+2,-3\la_1-2\la_2-6),\quad i=4;\]
	\[ 
	\theta=s_2s_1s_2s_1\cdot\la=(\la_1+\la_2+1,-3\la_1-2\la_2-6),\quad i=5; 
	\]
	\[
	 \theta=s_2s_1s_2s_1s_2\cdot\la=(\la_1,-3\la_1-\la_2-5),\quad i=6.
	\]
	 By considering the Wronskian equalities \eqref{3}, we solve for $\bm y^{(2)}$, $\bm y^{(2)(1)}$, $\bm y^{(2)(1)(2)}$, $\bm y^{(2)(1)(2)(1)}$, and $\bm y^{(2)(1)(2)(1)(2)}$ explicitly in terms of $\theta$. 
	Then we write $\theta$ in terms of $\la$ and check that the resulting pairs of polynomials are generic with respect to $\bLa,\bm z, \bm l_i$ under the condition of admissibility, see Section 3 of \cite{LMV1} for more detail. 
	
	The case of $\bm l_3$ is solved directly.
	
	We obtain the following statement.
	
	\begin{thm}\label{thm G2 generic}
		Let $i\in \{0,1,\dots,6\}$ and $\la\in\cP^+$. Suppose $\bm l_i$ is admissible, then $\bm {y^{l_i}}$ in Appendix \ref{sec:solutions} represents a solution to the Bethe ansatz equation associated to $\bs\La=(\la,\omega_2)$, $\bs z=(0,1)$, and $\bm l_i$.
	\end{thm}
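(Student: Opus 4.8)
The plan is to realize every solution except the one for $\bm l_3$ as the last pair of a chain of reproductions starting from the trivial pair $(1,1)$, following the method of Section 3 of \cite{LMV1}, and to handle $\bm l_3$ by a direct computation. The fact that makes this work is that the reproduction procedure of Theorem \ref{thm:reproducation} requires only the second factor $\omega_2$ to be dominant integral; the first slot may carry an arbitrary, possibly non-dominant, weight $\theta$. Thus for each target $\bm l_i$ with $i\neq 3$ we are free to start the chain from a pair $(\theta,\omega_2)$ with a cleverly chosen $\theta$, run the reproductions, and read off the component corresponding to $\la$.

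First I would record the base case: for $\bm l_0=(0,0)$ there are no Bethe variables, so \eqref{eq:bae.a}--\eqref{eq:bae.b} hold vacuously and $\bm y=(1,1)$ represents the unique solution associated to $(\theta,\omega_2),\bm z,\bm l_0$ for any weight $\theta$. Next I would apply the reproduction procedure in the directions $(2),(1),(2),(1),(2)$. At each step the Wronskian identities \eqref{3} reduce to a first-order linear equation for the single unknown reproduced polynomial: using $\Wr(g_1,g_2)=g_1g_2'-g_1'g_2$, the defining equation in direction $j$ becomes $\big(x^{c_j}\tilde y_j/y_j\big)'=\mc T_j\,p_j/y_j^{\,2}$, where $c_j=\langle\theta+\rho,\calpha_j\rangle$ and $p_j$ is the relevant power of the other polynomial ($p_1=y_2$, $p_2=y_1^3$). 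Since $\bm z=(0,1)$, the functions $\mc T_1,\mc T_2$ are products of powers of $x$ and $x-1$ with exponents read off from $\theta$ and $\omega_2$, so each integration is elementary and its polynomial part is the desired $\tilde y_j$; the vanishing of the residues at the roots of $y_j$ needed for $\tilde y_j$ to be a polynomial is exactly the content of the reproduction theorem. Taking, for the targets $\bm l_1,\bm l_2,\bm l_4,\bm l_5,\bm l_6$ respectively, the weights $\theta=s_2\cdot\la,\ s_2s_1\cdot\la,\ s_2s_1s_2\cdot\la,\ s_2s_1s_2s_1\cdot\la,\ s_2s_1s_2s_1s_2\cdot\la$ — chosen so that the accumulated shifted Weyl action returns the first-slot weight to $\la$ — produces the pairs $\bm y^{l_1},\dots,\bm y^{l_6}$ of Appendix \ref{sec:solutions}, now expressed in $\la_1,\la_2$.

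The remaining case $\bm l_3=(1,2)$ corresponds to the summand $V_{(\la_1,\la_2)}$ of \eqref{eq:dec of B}, that is to the zero weight of $V_{\omega_2}$, which is the only weight of $V_{\omega_2}$ not lying in the Weyl orbit of $\omega_2$. Because the iterated reproductions move only inside that orbit, $\bm l_3$ is unreachable by a chain, and I would instead substitute the ansatz $y_1=x-t^{(1)}_1$, $y_2=(x-t^{(2)}_1)(x-t^{(2)}_2)$ directly into \eqref{eq:bae.a}--\eqref{eq:bae.b} and solve the resulting small algebraic system by hand, matching the formula of Appendix \ref{sec:solutions}.

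The final step, uniform across all $i$, is to verify that each candidate pair is generic with respect to $\bLa=(\la,\omega_2),\bm z,\bm l_i$ under the sole hypothesis that $\bm l_i$ is admissible, i.e. that $V_{\la+\omega_2-\alpha(\bm l_i)}$ occurs in \eqref{eq:dec of B}. I expect this, rather than the integrations, to be the main obstacle: admissibility translates into explicit inequalities on $\la_1,\la_2$, and one must show that precisely these inequalities force the roots of $y_1,y_2$ to be simple, mutually distinct, and disjoint from $0$, $1$, and the roots of $\mc T_1,\mc T_2$. This is where the representation-theoretic condition must be matched against the analytic structure of the explicit solutions, and the boundary of the admissibility region is exactly where genericity can degenerate; organizing the bookkeeping that rules this out, and thereby invoking the equivalence in Theorem \ref{thm:reproducation} to conclude that each generic $\bm y^{l_i}$ represents a solution, is carried out following Section 3 of \cite{LMV1}.
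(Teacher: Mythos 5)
Your proposal follows essentially the same route the paper itself lays out in Section \ref{sec strategy}: reproduction chains in the directions $(2),(1),(2),(1),(2)$ starting from $\bm y=(1,1)$, with exactly the same choices $\theta=s_2\cdot\la,\ s_2s_1\cdot\la,\ s_2s_1s_2\cdot\la,\ s_2s_1s_2s_1\cdot\la,\ s_2s_1s_2s_1s_2\cdot\la$ for $\bm l_1,\bm l_2,\bm l_4,\bm l_5,\bm l_6$, the case $\bm l_3$ (the zero weight of $V_{\omega_2}$, which lies outside the Weyl orbit of $\omega_2$) handled by direct substitution, and genericity under admissibility checked following Section 3 of \cite{LMV1}. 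The paper's formal proof then simply records that the resulting explicit formulas are verified directly by computer, which corresponds in substance to your final verification step.
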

	\begin{proof}
		The statement can be checked directly with the help of computer.
	\end{proof}
	
	\begin{rem}
		In fact, if $\bm l_i$ is admissible, then $\bm {y^{l_i}}$ in Appendix \ref{sec:solutions} represents the unique solution to the Bethe ansatz equation associated to $\bs\La=(\la,\omega_2)$, $\bs z=(0,1)$, and $\bm l_i$. This follows from  Theorem \ref{non-zero} together with Theorem 5.2 in \cite{V1} since the multiplicity spaces are one-dimensional. 
	\end{rem}
	
	\subsection{Completeness of Bethe ansatz}\label{sec G generic}
	The main result of the section is Theorem \ref{thm G completeness}.

	Recall that the value of the weight function $\psi(z_1,z_2,\bm t)$ at a solution of the Bethe ansatz equation is called the Bethe vector.
	We have the following result, which is usually referred to as the completeness of Bethe ansatz.
	\begin{thm}\label{thm3.14}
		The set of Bethe vectors $\psi(z_1,z_2,\bm t)$, where $\bm t$ runs over the solutions to the Bethe ansatz equations with admissible $\bm l$, forms a basis of $(V_\lambda\otimes V_{\omega_2})^\sing$.
	\end{thm}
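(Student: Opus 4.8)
The plan is to identify each Bethe vector with the highest weight vector of an irreducible summand of $V_\la\otimes V_{\omega_2}$ through its weight, and then to conclude by a dimension count. The starting observation is that by the decomposition \eqref{eq:dec of B}, in which all multiplicities equal $1$, the singular space splits as $(V_\la\otimes V_{\omega_2})^\sing=\bigoplus_\mu (V_\la\otimes V_{\omega_2})^\sing_\mu$, where $\mu$ ranges over the distinct highest weights of the summands and each summand contributes a one-dimensional space of singular vectors of weight $\mu$. By the definition of admissibility, these highest weights are exactly the weights $\La-\alpha(\bm l_i)$ attached to the admissible pairs listed in \eqref{adm l}; in particular $\dim(V_\la\otimes V_{\omega_2})^\sing$ equals the number of admissible pairs.

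Next I would produce one Bethe vector per admissible pair and locate it in the correct weight space. For each admissible $\bm l_i$, Theorem \ref{thm G2 generic} together with the remark following it supplies the (unique) solution $\bm t$ of the Bethe ansatz equation associated to $(\la,\omega_2)$, $(0,1)$, $\bm l_i$. By Theorem \ref{thm:bveigen} the associated Bethe vector $\psi(z_1,z_2,\bm t)$ is singular of weight $\La-\alpha(\bm l_i)$, so it lies in the one-dimensional space $(V_\la\otimes V_{\omega_2})^\sing_{\La-\alpha(\bm l_i)}$, and by Theorem \ref{non-zero} it is nonzero; hence it spans that space.

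Finally, since $\alpha_1$ and $\alpha_2$ are linearly independent, distinct admissible pairs $\bm l_i$ give distinct weights $\La-\alpha(\bm l_i)$, so the Bethe vectors sit in pairwise distinct weight subspaces and are therefore linearly independent. As their number matches $\dim(V_\la\otimes V_{\omega_2})^\sing$, they form a basis. The step I expect to require the most care is not the linear algebra but the bookkeeping that establishes the exact bijection between admissible pairs and the summands of \eqref{eq:dec of B} — in particular handling the degenerate cases (such as $\la_2=0$, where both the summand $V_{(\la_1,\la_2)}$ and the pair $\bm l_3$ drop out, and the cases with $\la_1=0$ or small $\la_2$ where other summands become non-dominant and the corresponding pairs fail to be admissible); once this matching is pinned down, the remaining assertions are immediate from Theorems \ref{thm:bveigen} and \ref{non-zero}.
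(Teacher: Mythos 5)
Your proposal is correct and follows essentially the same route as the paper's own (much terser) proof: multiplicity one in the decomposition \eqref{eq:dec of B}, one solution per admissible $\bm l$ from Theorem \ref{thm G2 generic}, singularity and the weight from Theorem \ref{thm:bveigen}, nonvanishing from Theorem \ref{non-zero}, and a dimension count. The ``bookkeeping'' you flag is in fact automatic, since admissibility of $\bm l$ is defined precisely by $V_{\la+\omega_2-\alpha(\bm l)}\subset V_\la\otimes V_{\omega_2}$, so the bijection between admissible pairs and summands holds by definition.
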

	
	\begin{proof}
		All multiplicities in the decomposition of $V_\lambda\otimes V_{\omega_2}$ are 1. By Theorem \ref{thm G2 generic} for each admissible $\bm l$ we have a solution of the Bethe ansatz equation. As the Bethe vectors $\psi(z_1,z_2,\bm t)$ are nonzero and singular, the theorem follows.
	\end{proof}
	
	From Theorem \ref{thm3.14} we obtain that the Bethe ansatz is complete in a more general setting.
	\begin{thm}\label{thm G completeness}
		Let $\la\in\cP^+$ and $k\in \Z_{\gge 0}$. For a generic $(k+1)$-tuple of distinct complex numbers $\bm z=(z_0,z_1,\dots,z_k)$, the Gaudin Hamiltonians $\cH_0,\cH_1,\dots,\cH_k$ acting on $(V_\la\otimes V_{\omega_2}^{\otimes k})^\sing$ are diagonalizable and have simple joint spectrum. Moreover, for generic $\bm z$ there exists a set of solutions $\{\bm t_i,~i\in I\}$ of the Bethe ansatz equations such that the corresponding Bethe vectors $\{\psi(\bm z,\bm t_i),~i\in I\}$ form a basis of $(V_\la\otimes V_{\omega_2}^{\otimes k})^\sing$.
	\end{thm}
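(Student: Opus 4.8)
The plan is to induct on $k$, the base cases $k=0$ (where $(V_\la)^\sing$ is one dimensional) and $k=1$ (Theorem~\ref{thm3.14}) being already settled, and to pass from $k-1$ to $k$ by the degeneration technique of \cite{MV1}. Write $M'=V_\la\otimes V_{\omega_2}^{\otimes(k-1)}$ with generic parameters $z_0,\dots,z_{k-1}$, so that $V_\la\otimes V_{\omega_2}^{\otimes k}=M'\otimes V_{\omega_2}(z_k)$, and study the commuting operators $\cH_0(\bm z),\dots,\cH_k(\bm z)$ on the weight-$\mu$ singular vectors as $z_k\to\infty$. The guiding principle is that both \emph{simplicity of the joint spectrum} and the existence of a basis of a prescribed number of nonzero, linearly independent Bethe vectors are Zariski-open conditions on $\bm z$; it therefore suffices to verify the corresponding statements for the full Bethe algebra $\mc B$ at the degenerate configuration and then transport them to generic finite $\bm z$.

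In the limit $z_k\to\infty$ the last factor becomes $V_{\omega_2}(\infty)$, carrying the trivial $\mc B$-action. Using the multiplicity-free branching \eqref{eq:dec of B} (with $\la$ replaced by an arbitrary dominant $\nu$) together with the projection-to-lowest-weight isomorphism of Lemma~\ref{nonzero weight}, I would identify, as $\mc B$-modules,
\[
\big(M'\otimes V_{\omega_2}(\infty)\big)^{\sing}_{\mu}\;\cong\;\bigoplus_{\nu\,:\,V_\mu\subset V_\nu\otimes V_{\omega_2}}(M')^{\sing}_{\nu},
\]
with $\mc B$ acting through its action on $M'$. On each summand the inductive hypothesis applies: $\mc B$ acts with simple joint spectrum and a basis of Bethe vectors. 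Distinct summands are separated because a singular vector of weight $\nu$ in $M'$ produces, via Lemma~\ref{exponents}, a differential operator $\mc D_v$ whose exponents at $\infty$ are determined by $\nu$; hence the $\mc B$-eigenvalues attached to different $\nu$ cannot coincide. Consequently $\mc B$ has simple joint spectrum on $\big(M'\otimes V_{\omega_2}(\infty)\big)^{\sing}_{\mu}$, and the number of joint eigenlines equals $\sum_\nu\dim(M')^{\sing}_\nu=\dim(M'\otimes V_{\omega_2})^{\sing}_\mu$.

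By openness, $\mc B$ then has simple joint spectrum on $(V_\la\otimes V_{\omega_2}^{\otimes k})^{\sing}_\mu$ for generic finite $\bm z$, so $(V_\la\otimes V_{\omega_2}^{\otimes k})^{\sing}$ is a direct sum of one-dimensional $\mc B$-eigenspaces. The Bethe vectors $\psi(\bm z,\bm t)$ are nonzero singular joint eigenvectors (Theorems~\ref{thm:bveigen} and \ref{non-zero}), and by Theorem~\ref{thm G2 generic} and the reproduction procedure they are produced, for generic $\bm z$, in exactly the number $\dim(\cdots)^{\sing}$ dictated by the limit; lying in distinct eigenlines they are linearly independent and hence form a basis. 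Since $\cH_0,\dots,\cH_k$ form a commuting family inside the now diagonalizable $\mc B$, they are simultaneously diagonalizable, and their joint spectrum is simple for generic $\bm z$ because the residues of the associated functions $h_2(x)$ separate the finitely many eigenlines away from a proper subvariety of configurations.

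The main obstacle is the analytic control of the degeneration $z_k\to\infty$ underlying the open-condition argument: one must show that, after a suitable normalization as in \cite{MV1}, the Bethe vectors have well-defined nonzero limits, that no two of them collide, and that their total number is exactly preserved in passing to the limit, matching $\dim(\cdots)^{\sing}$. This bookkeeping---preservation of the count of Bethe vectors and the absence of eigenvalue collisions between different $\nu$-blocks---is precisely the content of the method of \cite{MV1}, and the only G$_2$-specific inputs it requires are the multiplicity-free decomposition \eqref{eq:dec of B} and the exponent computation of Lemma~\ref{exponents}.
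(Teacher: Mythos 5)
Your proposal follows essentially the same route as the paper, whose proof consists of citing the method of Theorem 6.1 of \cite{MV1} (see also Theorem 4.5 of \cite{LMV1}): induction on $k$, degeneration $z_k\to\infty$ with the trivial $\mc B$-action on $V_{\omega_2}(\infty)$ and Lemma \ref{nonzero weight}, the multiplicity-free decomposition \eqref{eq:dec of B} with the two-point completeness of Theorem \ref{thm3.14}, and openness of simple spectrum to return to generic finite $\bm z$. The only real difference is that you separate the $\nu$-blocks in the limit using the exponents of Lemma \ref{exponents}, i.e.\ eigenvalues of the full algebra $\mc B$, whereas the \cite{MV1} bookkeeping separates them with the limiting operator $\lim_{z_k\to\infty} z_k\cH_k$, acting on the block $(M')^{\sing}_{\nu}$ inside the $\mu$-weight component by the scalar $\tfrac12\bigl(c(\mu)-c(\nu)-c(\omega_2)\bigr)$ — the form of the argument one needs anyway to conclude simple joint spectrum for the quadratic Hamiltonians themselves (your closing appeal to residues of $h_2(x)$ only works once such a separating configuration is exhibited), so you should phrase the block separation through the Hamiltonians rather than through $\mc B$.
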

	\begin{proof}
		It is similar to the proof of Theorem 6.1 in \cite{MV1}, see also Theorem 4.5 in \cite{LMV1}.
	\end{proof}

	\section{Self-self-dual Grassmannian}\label{sec gr}
	Let $N$, $d\in \Z_{>0}$ be such that $N\lle d$. A sequence of integers $\la = (\la_1,\dots,\la_N)$ such that $\la_1\gge\la_2\gge\dots\gge\la_N\gge 0$ is called
	\emph{a partition with at most $N$ parts}. Set $|\la|=\sum_{i=1}^N\la_i$. Then it is said that $\la$ is a partition of $|\la|$.
	\subsection{Schubert cells}\label{sec schubert}
	Let $\C_d[x]$ be the space of polynomials in $x$ with complex coefficients of degree less than $d$. We have $\dim \C_d[x] = d$. Let $\mathrm{Gr}(N, d)$ be the Grassmannian of all $N$-dimensional subspaces in $\C_d[x]$. The Grassmannian $\Gr(N,d)$ is a
	smooth projective complex variety of dimension $N(d-N)$. 
	
	For $X\in\Gr(N,d)$, let $\Wr(X)$ be the unique monic polynomial of the form $\Wr(u_1,\dots,u_N)$, $u_i\in X$, $i=1,\dots,N$.

	Let $\R_d[x]\subset\C_d[x]$ be the set of polynomials in $x$ with real coefficients of degree less than $d$.
	Let $\Gr^\R(N,d)\subset \Gr(N,d)$ be the set of subspaces which have a basis consisting of polynomials with real coefficients. For $X\in\Gr(N,d)$ we have 	$X\in\Gr^\R(N,d)$ if and only if $\dim_\R(X\cap \R_d[x])=N$. We call such points $X$ {\it real}.

	For a full flag $\mc F=\{0\subset F_1\subset F_2\subset \dots\subset F_d=\C_{d}[x]\}$ and a partition $\la=(\la_1,\dots,\la_N)$ such that $\la_1\lle d-N$, the Schubert cell $\Omega_{\la}(\mc F)\subset\Gr(N,d)$ is given by
	\[
	\Omega_{\la}(\mc F)=\{X\in\Gr(N,d)|\dim(X\cap F_{d-j-\la_{N-j}})=N-j,~\dim(X\cap F_{d-j-\la_{N-j}-1})=N-j-1\}.
	\]   We have $\mathrm{codim}~\Omega_{\la}(\mc F)=|\la|$.
	
	The Schubert cell decomposition associated to a full flag $\mc F$ is given by
	\[
	\Gr(N,d)=\bigsqcup_{\la,\ \la_1\lle d-N}\Omega_{\la}(\mc F).
	\]
	The Schubert cycle  $\overline{\Omega}_{\la}(\mc F)$ is the closure of a Schubert cell $\Omega_{\la}(\mc F)$ in the Grassmannian $\Gr(N,d)$. Schubert cycles are algebraic sets with very rich geometry and topology.
	It is well known that Schubert cycle  $\overline{\Omega}_{\la}(\mc F)$  is described by the formula
	\[
	\overline{\Omega}_{\la}(\mc F)=\bigsqcup_{\substack{\la\subseteq\mu,\\ \mu_1\lle d-N}}\Omega_{\mu}(\mc F).
	\]
	Here the union is over partitions $\mu$ with at most $N$ parts such that $\mu_i\gge \la_i$, $i=1,\dots, N$.
	
	Let $\mc F(\infty)$ be the full flag in $\C_d[x]$ given by
	\[
	\mc F(\infty)=\{0\subset \C_1[x]\subset\C_2[x]\subset\dots\subset\C_d[x]\}.
	\]
	
	The subspace $X$ is a point of $\Omega_{\la}(\mc F(\infty))$ if and only if for every $i=1,\dots,N$, it
	contains a polynomial of degree $d-i-\la_{N+1-i}$. 
	
	For $z\in\C$, consider the full flag
	\[
	\mc F(z)=\{0\subset (x-z)^{d-1}\C_1[x]\subset(x-z)^{d-2}\C_2[x]\subset\dots\subset\C_d[x]\}.
	\]
	
	The subspace $X$ is a point of $\Omega_{\la}(\mc F(z))$ if and only if for every $i=1,\dots,N$, it
	contains a polynomial with a root at $z$ of order $\la_i+N-i$.
	
	A point $z\in\C$ is called a {\it base point} for a subspace $X\subset \C_d[x]$ if $g(z)=0$ for every $g\in X$.
	
	Let $\bm z=(z_1,\dots,z_n)\in {\mathring{\bP}}_n$. Let $\bLa=(\la^{(1)},\dots,\la^{(n)})$ be a sequence of partitions with at most $N$ parts. Set $|\bLa|=\sum_{s=1}^n|\la^{(s)}|$.

	Assuming $|\bLa|=N(d-N)$, denote by $\Omega_{\bLa,\bs z}$ the intersection of the Schubert cells:
	\beq\label{Omega}
	\Omega_{\bLa,\bs z}= \bigcap_{s=1}^n\Omega_{\la^{(s)}}(\mc F(z_s)).
	\eeq
	
	Note that due to our assumption, $\Omega_{\bLa,\bs z}$ is a finite subset of $\Gr(N,d)$. Moreover $\Omega_{\bLa,\bs z}$ is non-empty if and only if $\dim(V_\bLa)^{\sln_N}>0$.
	
	Define the {\it stratum} $\Omega_{\bLa}$ by the formula
	\beq\label{strata}
	\Omega_{\bLa}:=\bigcup_{\bm z\in {\mathring{\bP}}_n}\Omega_{\bLa,\bm z}\subset \Gr(N,d).
	\eeq
	
	\subsection{Self-dual spaces}\label{sec self-dual}Let $\bLa=( \la^{(1)},\dots,\la^{(n)})$ be a tuple of partitions with at most $N$ parts such that $|\bLa|=N(d-N)$ and let $\bs z=(z_1,\dots,z_n)\in \mathring{\bP}_n$.
	
	Define a sequence of polynomials $\bs T=(T_1,\dots,T_N)$ by
	\[
	T_i(x)=\prod_{s=1}^n(x-z_s)^{\la_{i}^{(s)}-\la_{i+1}^{(s)}},\quad i=1,\dots,N,
	\]
	where $\la_{N+1}^{(s)}=0$. Here and in what follows we use the convention that $x-z_s$ is considered as the constant function $1$ if $z_s=\infty$. We say that $\bs T$ is {\it associated with} $\bLa,\bm z$.
	
	Let $X\in \Omega_{\bLa,\bm z}$ and $g_1,\dots,g_i\in X$. Define the \emph{divided Wronskian $\Wr^\dag$ with respect to $\bLa,\bm z$} by
	\[
	\Wr^\dag(g_1,\dots,g_i)=\Wr(g_1,\dots,g_i)\prod_{j=1}^i T_{N+1-j}^{j-i-1},\quad i=1,\dots,N.
	\]
	Note that $\Wr^\dag(g_1,\dots,g_i)$ is a polynomial in $x$.
	
	Let $\Gamma=\{u_1,\dots,u_N\}$ be a basis of $X\in \Omega_{\bLa,\bm z}$, define a sequence of polynomials
	\beq\label{eq y}
	y_{N-i}=\Wr^\dag(u_1,\dots,u_i),\quad i=1,\dots,N-1.
	\eeq
	Denote $(y_1,\dots,y_{N-1})$ by $\bs y_\Gamma$. We say that $\bs y_\Gamma$ is {\it constructed from the basis} $\Gamma$.
	
	Given $X\in \Gr(N,d)$, define the \emph{dual space} $X^\dag$ of $X$ by
	$$ X^\dag=\{ \Wr^\dag (g_1,\dots,g_{N-1})\ | \ g_i\in X,\ i=1,\dots,N-1\}. $$
	
	Note that $X^\dag$ has no base points.	
	
	Given a space of polynomials $X$ and a rational function $g$ in $x$, denote by $g\cdot X$ the space of rational functions of the form $g\cdot f$ with $f\in X$.
	
	A space of polynomials $X$ is called a {\it pure self-dual space} if $X=X^\dag$. A space of polynomials $X$ is called \emph{self-dual} if $X = g\cdot X^\dag$ for some polynomial $g\in\C[x]$. In particular, if $X\in \Omega_{\bLa,\bm z}$ is self-dual, then $g=T_N$ (that is $X = T_N\cdot X^\dag$) and $T_{N-i}=T_i$, $i=1,\dots,N-1$, see for example Lemma 4.1 in \cite{LMV2}. 
	
	If $X$ is self-dual then $g\cdot X$ is also self-dual for any nonzero $g\in\C[x]$.
	
	The self-dual spaces are characterized by the following lemma.
	\begin{lem}\label{lem y sd}
	Let $X\in \Omega_{\bLa,\bm z}$ and let $\bm T$ be associated with $\bLa,\bm z$. Suppose $T_i=T_{N-i}$ for $i=1,\dots,N-1$. Then $X$ is self-dual if and only if there exists a basis $\Gamma=\{u_1,\dots,u_N\}$ of $X$ such that $\bm y_\Gamma=(y_1,\dots,y_{N-1})$ given by \eqref{eq y} satisfies $y_{i}=y_{N-i}$ for $i=1,\dots,N-1$.
	\end{lem}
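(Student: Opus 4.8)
The plan is to pass from the combinatorial condition $y_i=y_{N-i}$ to a symmetry of the scalar differential operator whose kernel is $X$, and to read self-duality off that symmetry. First I would record the dictionary between complete flags in $X$ and tuples $\bs y_\Gamma$. For a basis $\Gamma=\{u_1,\dots,u_N\}$ the partial Wronskians $W_k=\Wr(u_1,\dots,u_k)$ depend only on the flag $\langle u_1\rangle\subset\langle u_1,u_2\rangle\subset\cdots$, and the monic order-$N$ operator $\mc D_X$ with $\Ker\mc D_X=X$ factors as
\[
\mc D_X=\prod_{k=N}^{1}\Big(\pa_x-\ln'\!\big(W_k/W_{k-1}\big)\Big),
\]
where $W_k/W_{k-1}=(y_{N-k}/y_{N-k+1})\,T_N T_{N-1}\cdots T_{N+1-k}$ with the convention $y_0=y_N=1$. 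Thus a flag, its tuple $\bs y_\Gamma$, and the factored operator determine one another; in particular $X=\Ker\mc D_{\bs y_\Gamma}$, so a tuple together with $\bLa,\bm z$ reconstructs the space. This is the $\sln_N$ specialization of the constructions of \cite{MV4,BM}.

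The key structural input is the behavior of this dictionary under duality. I would show that passing from $X$ to $X^\dag$ replaces $\mc D_X$ by the formal adjoint of a fixed conjugate $\phi\,\mc D_X\,\phi^{-1}$, with $\phi$ a suitable product of the $T_i$, and that this reverses the order of the factorization. Concretely, the dual Wronskians $\widetilde W_m=W_{N-m}/W_N$ are proportional to $y_m$ up to a monomial in the $T_i$, so taking the divided Wronskian of $X^\dag$ relative to the reversed sequence $T_i^\dag=T_{N-i}$ yields $\bs y_{\Gamma^\dag}=(y_{N-1},\dots,y_1)$: \emph{duality reverses the tuple}. Under the hypothesis $T_i=T_{N-i}$ the dual data $\bLa^\dag,\bm z$ and the multiplier $T_N$ are exactly those of \cite[Lemma 4.1]{LMV2}, so $T_N\cdot X^\dag$ lies in the same $\Omega_{\bLa,\bm z}$ and is reconstructed from the reversed tuple.

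Granting this, the ``if'' direction is immediate: if some $\Gamma$ has $y_i=y_{N-i}$, then the reversed tuple of $T_N\cdot X^\dag$ equals $\bs y_\Gamma$, and reconstruction forces $X=T_N\cdot X^\dag$, so $X$ is self-dual. The ``only if'' direction is where the real work lies: a single space carries many flags, giving the whole $\mathfrak S_N$-population of tuples, so I cannot invoke uniqueness and must \emph{produce} a flag with a palindromic tuple. Self-duality $X=T_N X^\dag$ makes $\mc D_X$ self-adjoint up to the fixed conjugation $\phi$; equivalently, via the top Wronskian pairing $\Lambda^{N-1}X\otimes X\to\Lambda^N X$ together with $X=T_N X^\dag$, it endows $X$ with a nondegenerate bilinear form. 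Choosing a complete flag in $X$ isotropic for this form (such flags exist over $\C$) and feeding it through the reversal identity of the previous step should show its tuple is fixed by $\bs y\mapsto(y_{N-1},\dots,y_1)$, i.e. $y_i=y_{N-i}$. The main obstacle is precisely this final matching: verifying that an isotropic (self-dual) flag for the Wronskian form corresponds exactly to a palindromic divided-Wronskian tuple, and checking that the resulting divided Wronskians are genuinely polynomial, so that the flag is realized by an honest basis of $X$. I expect to settle this by pairing the self-adjoint factorization of $\mc D_X$ against the reversal identity and controlling polynomiality through the exponents prescribed by $\bLa,\bm z$.
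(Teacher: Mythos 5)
Your proposal is correct and is essentially the paper's own argument: the paper handles the ``only if'' direction by citing \cite{MV2}, whose proof is exactly your construction of a flag self-dual with respect to the form $\kappa$ (hence fixed under the duality involution, whence a palindromic tuple), and the ``if'' direction by citing Lemmas 5.2--5.4 of \cite{LMV2}, which are precisely your tuple-reversal identity for $X^\dag$ combined with reconstruction of the space as the kernel of the factorized operator built from the tuple and $\bm T$. The final ``obstacle'' you flag is smaller than you fear: polynomiality of $\Wr^\dag$ holds for arbitrary elements of $X\in\Omega_{\bLa,\bm z}$ (as noted in Section~\ref{sec self-dual}), and the matching of $\kappa$-self-dual flags with palindromic tuples follows from your own observation that the dual basis vectors $\Wr^\dag(u_1,\dots,\widehat{u_j},\dots,u_N)$ are $\kappa$-orthogonal to all $u_m$ with $m\ne j$, up to scalars that a rescaling of the basis removes.
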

	\begin{proof}
		The only if part has been proved in \cite{MV2}.	Since $T_i=T_{N-i}$ for $i=1,\dots,N-1$, the if part follows from Lemmas 5.2, 5.3, 5.4 of \cite{LMV2}.
	\end{proof}
		
	By Corollary 6.5 of \cite{MV2}, a pure self-dual space of polynomials $X$ possesses a nondegenerate bilinear form $\kappa$ given by
	\[
	\kappa(u_1,u_2)=\Wr^\dag(u_1,v_1,\dots,v_{N-1}),\quad \text{if }u_2=\Wr^\dag(v_1,\dots,v_{N-1}),
	\]
	where $u_i,v_j\in X$, $i=1,2$, $j=1,\dots,N-1$. This form is symmetric if $N$ is odd and skew-symmetric if $N$ is even.
	
	\subsection{Self-self-dual spaces}\label{sec ssd gr}
	From now on, we set $N=7$. We simply write $\Gr_d$ for $\Gr(7,d)$.
	
	Following Definition 4.1 of \cite{BM}, a pure self-dual space of polynomials $X\in\Omega_{\bLa,\bs z}$ of dimension 7 is called \emph{pure self-self-dual} if for a generic monic polynomial $f\in X$ there exists a three-dimensional isotropic  (with respect to the bilinear form $\kappa$) subspace $U\subset X$ such that
	$$
	f\perp U \quad  \text{and} \quad \Wr(U)=T_1^2T_2f^2.
	$$
	
	A self-dual space of polynomials $X\in\Omega_{\bLa,\bs z}$ of dimension 7 is called \emph{self-self-dual} if $X/T_7$ is pure self-self-dual.
	
	Note that if $X$ is self-self-dual then $g\cdot X$ is also self-self-dual for any nonzero polynomial $g\in\C[x]$.
	If $X\in \Omega_{\bLa,\bm z}$ is a self-self-dual space, then $T_1=T_3=T_4=T_6$ and $T_2=T_5$, see Lemma 4.8 in \cite{BM}.

	Here is another description of pure self-self-dual spaces. 
	\begin{thm}[\cite{BM}] 
	Suppose $X\in \Omega_{\bLa,\bm z}$ is a pure self-dual space and $T_1=T_3$. 
	Then $X$ is pure self-self-dual if and only if 
		\[	
		\{u^2~|~u\in X \}=\{\Wr^\dagger(u_1,u_2,u_3)~|~u_i\in X ,\, \kappa(u_i,u_j)=0\}.
		\]\qed
	\end{thm}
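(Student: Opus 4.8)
The final statement to prove is the theorem from \cite{BM} characterizing pure self-self-dual spaces among pure self-dual spaces with $T_1=T_3$: namely, that $X$ is pure self-self-dual if and only if $\{u^2 \mid u\in X\} = \{\Wr^\dagger(u_1,u_2,u_3)\mid u_i\in X,\ \kappa(u_i,u_j)=0\}$.

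\textbf{Approach.}
The plan is to translate the defining condition of pure self-self-duality---the existence, for a generic monic $f\in X$, of a three-dimensional $\kappa$-isotropic subspace $U$ with $f\perp U$ and $\Wr(U)=T_1^2T_2 f^2$---into the set-equality statement, exploiting the identity $\Wr^\dagger(u_1,u_2,u_3)=\Wr(u_1,u_2,u_3)\,T_7^{-1}T_6^{-1}T_5^{-1}$ for $N=7$. Since $X$ is pure self-dual we have $T_1=T_6$, $T_2=T_5$, $T_7=1$, so $\Wr^\dagger(u_1,u_2,u_3)=\Wr(u_1,u_2,u_3)\,T_1^{-1}T_2^{-1}$ on isotropic triples. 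Thus the condition $\Wr(U)=T_1^2T_2 f^2$ is equivalent to $\Wr^\dagger(u_1,u_2,u_3)=T_1 f^2$ when $U=\langle u_1,u_2,u_3\rangle$. First I would establish the degree and divided-Wronskian bookkeeping to confirm that $u^2$ and $\Wr^\dagger(u_1,u_2,u_3)$ for isotropic triples land in the same space (namely $T_1\cdot X$, up to the self-dual normalization), so that the two sets in the statement are at least comparable as subsets of a common space of polynomials.

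\textbf{Key steps.}
For the forward direction, assume $X$ is pure self-self-dual. Given a generic monic $f\in X$, the defining property yields an isotropic $U=\langle u_1,u_2,u_3\rangle$ with $\Wr^\dagger(u_1,u_2,u_3)=T_1 f^2$; this exhibits every generic $f^2$ (up to the factor $T_1$) as a divided Wronskian of an isotropic triple, giving one inclusion on a dense set, which I would upgrade to the full set equality by a Zariski-density/continuity argument since both sides are closed conditions. For the reverse inclusion I would take an arbitrary isotropic triple $u_1,u_2,u_3$ and, using the set equality, write $\Wr^\dagger(u_1,u_2,u_3)=T_1 g^2$ for some $g\in X$; setting $U=\langle u_1,u_2,u_3\rangle$ and checking $g\perp U$ via the bilinear form $\kappa$ recovers the pure self-self-dual structure. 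The perpendicularity $f\perp U$ should follow from the defining formula for $\kappa$ in Section~\ref{sec self-dual}, since $\kappa(f,u_i)=\Wr^\dagger(f,v_1^{(i)},\dots,v_6^{(i)})$ vanishes precisely when $f$ squared arises as the divided Wronskian of a complementary isotropic piece; I would make this explicit by relating the two-dimensional isotropic complement data to the factorization $u^2=\Wr^\dagger(\cdots)$.

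\textbf{Main obstacle.}
The hard part will be the reverse direction: from the set equality one must reconstruct, for generic $f$, a genuine three-dimensional isotropic subspace $U$ satisfying both $f\perp U$ \emph{and} the Wronskian identity simultaneously, rather than merely producing isotropic triples abstractly. The difficulty is that the set equality only guarantees that $u^2$ lies in the image of the divided-Wronskian map on isotropic triples, but one must control which triple realizes a given $f^2$ and verify the orthogonality $f\perp U$ is automatic. I expect this to hinge on a dimension count showing the isotropic triples realizing $f^2$ form exactly the fiber predicted by the self-self-dual structure, together with the nondegeneracy of $\kappa$ (symmetric here since $N=7$ is odd) to pin down $U$ as the $\kappa$-orthogonal complement of $f$ inside a suitable six-dimensional isotropic flag. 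Ruling out degenerate or non-generic configurations, and confirming that genericity of $f$ propagates through the construction, is where the real care is needed.
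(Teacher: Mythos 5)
Your proposal has a concrete bookkeeping error and two genuine gaps; note also that the paper itself gives no proof of this statement (it is quoted from \cite{BM} with a \qed), so the attempt must stand on its own. First, the normalization: for $N=7$ and $i=3$ the definition of the divided Wronskian gives $\Wr^\dagger(u_1,u_2,u_3)=\Wr(u_1,u_2,u_3)\prod_{j=1}^{3}T_{8-j}^{\,j-4}=\Wr(u_1,u_2,u_3)\,T_7^{-3}T_6^{-2}T_5^{-1}$, not $T_7^{-1}T_6^{-1}T_5^{-1}$ as you wrote. Since pure self-duality forces $T_7=1$, $T_6=T_1$, $T_5=T_2$ (and $T_4=T_3=T_1$ under the hypothesis $T_1=T_3$), the correct translation is $\Wr(U)=T_1^2T_2f^2 \iff \Wr^\dagger(u_1,u_2,u_3)=f^2$, with no stray factor of $T_1$. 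Under your normalization the asserted set equality could not even be true as stated (squares on one side, $T_1\cdot$squares on the other) -- the fact that the theorem equates the two sets on the nose should have signaled the slip.

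More seriously, both directions of your argument are incomplete where it matters. In the forward direction, the definition of pure self-self-duality supplies, for each generic monic $f$, \emph{one} isotropic subspace $U$ with $\Wr^\dagger(U)=f^2$; after a closure argument this yields at best the inclusion $\{u^2\mid u\in X\}\subseteq\overline{\{\Wr^\dagger(u_1,u_2,u_3)\mid \kappa(u_i,u_j)=0\}}$. The reverse inclusion -- that the divided Wronskian of \emph{every} isotropic triple is a perfect square of an element of $X$ -- does not follow from any Zariski-density or continuity argument, because the hypothesis says nothing about arbitrary isotropic triples; this is the harder half of the equivalence and needs its own proof (in \cite{BM} it comes out of the population/flag-variety machinery; a dimension count comparing the six-dimensional variety of isotropic $3$-planes in the $7$-dimensional space $(X,\kappa)$ with the six-dimensional projectivized family of squares is a plausible substitute, but you would have to establish irreducibility, finiteness of the Wronski-type map, and closedness of the image, none of which is sketched). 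In the reverse direction you correctly isolate the orthogonality $f\perp U$ as the crux, but you offer only the expectation that it is ``automatic'': nothing in the set equality forces the triple realizing $f^2$ to be $\kappa$-orthogonal to $f$, and deducing $f\in U^\perp$ (a four-dimensional space containing the isotropic $U$) from $\Wr^\dagger(U)=f^2$ requires a genuine lemma relating $\kappa$ to Wronskian identities, which you do not supply. The only step that is safely routine is linear independence of the realizing triple (a dependent triple has vanishing Wronskian, while $f^2\neq 0$).
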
	
	
    The self-self-dual spaces are characterized by the following lemma,  cf. Lemma \ref{lem y sd}.
		
	\begin{lem}\label{lem y ssd}
		Let $X\in \Omega_{\bLa,\bm z}\subset \Gr_d$ and let $\bm T$ be associated with $\bLa,\bm z$. Suppose $T_1=T_{3}=T_4=T_6$ and $T_2=T_5$. Then $X$ is self-self-dual if and only if there exists a basis $\Gamma=\{u_1,\dots,u_7\}$ of $X$ such that $\bm y_\Gamma=(y_1,\dots,y_{6})$  given by \eqref{eq y} satisfies $y_1^2=y_6^2=y_3=y_4$, $y_2=y_5$.
	\end{lem}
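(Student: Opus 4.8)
The plan is to observe that the stated conditions decompose into the self-dual relations $y_i=y_{7-i}$ together with one genuinely new ``self-self'' relation, and to match these to the two ingredients in the definition of a self-self-dual space. First I would note that the hypotheses $T_1=T_3=T_4=T_6$ and $T_2=T_5$ give $T_i=T_{7-i}$ for every $i$, so Lemma \ref{lem y sd} is available. Once the divided Wronskians $y_1$ and $y_6$ are normalized to be monic, the conditions $y_1^2=y_6^2$, $y_2=y_5$, $y_3=y_4$ coincide with the self-dual relations $y_1=y_6$, $y_2=y_5$, $y_3=y_4$; the only extra information carried by the chain $y_1^2=y_6^2=y_3=y_4$ is the single equality $y_3=y_1^2$ (equivalently $y_4=y_6^2$). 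Thus the lemma reduces to the assertion that, given self-duality, $X$ is self-self-dual precisely when a flag-adapted basis can be chosen with $y_3=y_1^2$.

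The next step is to make the dictionary between the divided-Wronskian flag and the defining data of a pure self-self-dual space explicit. For a basis $\Gamma=\{u_1,\dots,u_7\}$ adapted to the self-dual flag, inverting \eqref{eq y} gives $\Wr(u_1)=y_6\,T_7$ and $\Wr(u_1,u_2,u_3)=y_4\,T_7^3T_6^2T_5$. Passing to $X/T_7$, which removes the base points so that $T_7$ becomes trivial, these reproduce exactly the factorization of the operator $\mc D_{\bm y}$ in \eqref{eq dy}, with $T_1,T_2$ of the Grassmannian playing the role of $\mc T_1,\mc T_2$. Consequently the Wronskian identity $\Wr(U)=T_1^2T_2f^2$ in the definition of pure self-self-dual, taken with $U=\langle u_1,u_2,u_3\rangle$ and $f\propto u_1$, becomes precisely $y_4=y_6^2$; the self-dual pairing $\kappa$ forces $U$ to be isotropic and $f\perp U$ automatically once the flag is self-dual. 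This identifies the geometric and algebraic conditions on a single basis.

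With this dictionary I would argue the two directions in parallel with Lemma \ref{lem y sd}. For the ``only if'' direction, the pure self-self-dual structure of $X/T_7$ supplied by \cite{BM} produces the isotropic triple and the Wronskian identity, and taking $\Gamma$ adapted to the corresponding flag yields a basis satisfying $y_3=y_1^2$ on top of the self-dual relations. For the ``if'' direction, self-duality of $X$ follows from Lemma \ref{lem y sd}, so $X/T_7=X^\dag$ is pure self-dual; the relation $y_3=y_1^2$ then feeds the set equality $\{u^2\mid u\in X\}=\{\Wr^\dagger(u_1,u_2,u_3)\mid\kappa(u_i,u_j)=0\}$ of the \cite{BM} characterization, and the verification is completed with the Wronskian identities of \cite{LMV2} (Lemmas 5.2--5.4), exactly as in the self-dual case.

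The hard part will be the passage from the single-basis relation $y_3=y_1^2$ to the full condition, which quantifies over all generic monic $f\in X$: one must show that the isotropic subspace and the identity $\Wr(U)=T_1^2T_2f^2$ persist as $f$ varies, not merely for $f\propto u_1$. I expect this to follow from the fact that the whole G$_2$-population, and hence $\Ker\mc D_{\bm y}$, is determined by the pair recovered from $\bm y_\Gamma$, so that the set equality of \cite{BM} is forced by the single relation together with self-duality. The remaining technical points are the bookkeeping of the base-point factor $T_7$ in passing between $X$ and $X/T_7$ and the sign ambiguity $y_1=\pm y_6$ in the normalization of the divided Wronskians.
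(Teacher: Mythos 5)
Your proposal follows essentially the same route as the paper: the ``only if'' direction is delegated to \cite{BM}, and the ``if'' direction reduces via Lemma \ref{lem y sd} to self-duality plus the single extra relation $y_3=y_1^2$, which is then propagated from the one flag-adapted basis to generic $f$ through the G$_2$-population originated at $\bm y=(y_1,y_2)$ --- exactly the adaptation of Section 5 of \cite{BM} (Lemmas 5.2--5.3 and Theorem 5.4) that the paper carries out. The one step you leave as expected rather than executed --- exhibiting the descendants $\tl y_1=u_2$ and $\tl y_2=\Wr^\dag(u_1,u_3)$ so the population is defined, and correcting the flag-to-basis constants via Lemmas 6.14--6.15 of \cite{MV2} --- is precisely the detail the paper supplies, so your ``hard part'' resolves the way you anticipate.
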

	\begin{proof}
	The only if part has been proved in \cite{BM}. 
	
	Consider the if part. Without loss of generality, we assume that $X$ has no base points. We follow the strategy in Section 5 of \cite{BM}. It follows from Lemma \ref{lem y sd} that $X$ is self-dual. By assumption, there exist polynomials $\tl y_1,\tl y_2$ such that
	\[
	\Wr(y_1,\tl y_1)=T_1y_2,\qquad \Wr(y_2,\tl y_2)=T_2y_1^3.
	\]
	For instance, since $y_1=u_1$ we can take $\tl y_1=u_2$ and $\tl y_2=\Wr^\dag(u_1,u_3)$. Set $\bm y=(y_1,y_2)$. Hence we can define descendants $(\tilde y_1,y_2)$, $(y_1,\tilde y_2)$ of $\bm y$, and the
	G$_2$-population originated at $\bm y$. Similarly, we can prove the same statement as Lemma 5.2 in \cite{BM} and the first statement of Lemma 5.3 in \cite{BM} with the modified terminology. Now the lemma can be proved in the same way as Theorem 5.4 of \cite{BM} with the following change in the first paragraph of page 108: It follows that there exist constants $c_1,c_2,c_3,c_4$ such that the C$_3$-triple $(\tl y_1,\tl y_2, \tl y_3)$
	corresponds to the flag $\tl {\mc F}$, which is related to the basis
    \[
    \{\tl u_1=u_1+c_1u_2,\,\tl u_2=u_2+c_2u_3,\,\tl u_3=u_3+c_3u_4+c_4u_5,\,\tl u_4,\,\tl u_5,\,\tl u_6,\,\tl u_7\},
    \]
	see Lemmas 6.14-6.15 in \cite{MV2}. In particular, $\Wr^\dag(\tl u_1,\tl u_2,\tl u_3)=\tl y_3^2$.
	
	This change does not alter the rest of the proof since the constant $c_4$ is not used there.
	\end{proof}

	Assume the pair $\bm y=(y_1,y_2)$ of polynomials represents a solution to the Bethe ansatz equation \eqref{eq:bae.a}, \eqref{eq:bae.b}, where all weights are dominant integral. Recall the polynomials $\mc T_1, \mc T_2$ in \eqref{eq poly T} and the differential operator $\mc D_{\bm y}$ in \eqref{eq dy}.
	
	\begin{lem}[\cite{BM}]\label{lem ker ssd}
		 The kernel of the differential operator $\mc D_{\bm y}$ is a pure self-self-dual space of polynomials.
	\end{lem}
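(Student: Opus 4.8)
The plan is to deduce the statement from the characterization of self-self-dual spaces in Lemma \ref{lem y ssd} by exhibiting a basis of $X:=\Ker\mc D_{\bm y}$ adapted to the factorization \eqref{eq dy}. Recall from the discussion preceding the lemma that $X$ is a $7$-dimensional space of polynomials. Since each factor in \eqref{eq dy} has the form $\pa_x-\ln'(W_i/W_{i-1})$, the operator is written in the Frobenius factorization form
\[
\mc D_{\bm y}=\bigl(\pa_x-\ln'(W_7/W_6)\bigr)\bigl(\pa_x-\ln'(W_6/W_5)\bigr)\cdots\bigl(\pa_x-\ln'(W_1/W_0)\bigr),\qquad W_0=1,
\]
associated with a complete flag in $X$; the partial Wronskians $W_i=\Wr(u_1,\dots,u_i)$ of the corresponding adapted basis $\Gamma=\{u_1,\dots,u_7\}$ are read off, up to nonzero constants, by matching logarithmic derivatives with \eqref{eq dy}:
\[
W_1=y_1,\quad W_2=\mc T_1 y_2,\quad W_3=\mc T_1^2\mc T_2\,y_1^2,\quad W_4=\mc T_1^4\mc T_2^2\,y_1^2,
\]
\[
W_5=\mc T_1^7\mc T_2^3\,y_2,\quad W_6=\mc T_1^{10}\mc T_2^5\,y_1,\quad W_7=\mc T_1^{14}\mc T_2^7.
\]

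Next I would pin down the sequence $\bm T$ associated with $X$. By Proposition \ref{prop diff oper} the exponents of $\mc D_{\bm y}$ at each $z_s$ are those of Lemma \ref{exponents} shifted by $\mathrm{ord}_{z_s}(\mc T_1^2\mc T_2)=2\la_1^{(s)}+\la_2^{(s)}$; in particular the least exponent is $0$, so $X$ has no base points and lies in a Schubert intersection $\Omega_{\bLa',\bm z}$. Computing the consecutive exponent gaps shows that the associated $\bm T=(T_1,\dots,T_7)$ satisfies $T_1=T_3=T_4=T_6=\mc T_1$, $T_2=T_5=\mc T_2$ and $T_7=1$. Thus the hypotheses $T_1=T_3=T_4=T_6$, $T_2=T_5$ of Lemma \ref{lem y ssd} hold.

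With the $W_i$ and the $T_i$ in hand, the remaining step is to evaluate the divided Wronskians of \eqref{eq y}. Writing $\bm y_\Gamma=(Y_1,\dots,Y_6)$ with $Y_{7-i}=\Wr^\dag(u_1,\dots,u_i)=W_i\prod_{j=1}^iT_{8-j}^{\,j-i-1}$, substitution of the values above makes all powers of $\mc T_1$ and $\mc T_2$ cancel and yields
\[
\bm y_\Gamma=(y_1,\,y_2,\,y_1^2,\,y_1^2,\,y_2,\,y_1).
\]
Hence $Y_1^2=Y_6^2=Y_3=Y_4=y_1^2$ and $Y_2=Y_5=y_2$, which are exactly the relations required by Lemma \ref{lem y ssd}. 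Therefore $X$ is self-self-dual, and since $T_7=1$ and $X$ has no base points we have $X/T_7=X$, so $X$ is pure self-self-dual.

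I expect the main obstacle to be the justification that \eqref{eq dy} really is the Frobenius factorization coming from a flag of \emph{polynomial} subspaces of $X$, i.e. that the ratios $W_i/W_{i-1}$ are genuine quotients of partial Wronskians of a basis of $X$ rather than merely formal logarithmic derivatives. This is precisely where the reproduction procedure enters: one must invoke that $X$ is the span of the first coordinates of the G$_2$-population originated at $\bm y$ and that iterated reproduction in the directions recorded by \eqref{eq dy} produces the adapted basis with the stated partial Wronskians. Once this structural input is granted, the identification of $\bm T$ via Lemma \ref{exponents} and the cancellation of the $\mc T$-powers in the divided Wronskians are routine.
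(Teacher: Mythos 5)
Your bookkeeping is correct, but note that the paper gives no internal proof of this lemma at all: it is quoted from \cite{BM}, where it is proved by identifying $\Ker\mc D_{\bm y}$ with the span of the first coordinates of the G$_2$-population and verifying the isotropic-subspace condition in the definition of pure self-self-dual spaces directly. Your route instead runs the statement through the divided-Wronskian criterion of Lemma \ref{lem y ssd}, and the data you compute is exactly what the paper records in the Remark immediately after the lemma: matching logarithmic derivatives in \eqref{eq dy} gives $W_1=y_1$, $W_2=\mc T_1y_2$, $W_3=\mc T_1^2\mc T_2y_1^2$, $W_4=\mc T_1^4\mc T_2^2y_1^2$, $W_5=\mc T_1^7\mc T_2^3y_2$, $W_6=\mc T_1^{10}\mc T_2^5y_1$, $W_7=\mc T_1^{14}\mc T_2^7$; the exponent shift by $\operatorname{ord}_{z_s}(\mc T_1^2\mc T_2)=2\la_1^{(s)}+\la_2^{(s)}$ yields minimal exponent $0$ and the partition $\la^{(s)}_A$, hence $T_1=T_3=T_4=T_6=\mc T_1$, $T_2=T_5=\mc T_2$, $T_7=1$; and the cancellations in $\Wr^\dag(u_1,\dots,u_i)=W_i\prod_{j=1}^iT_{8-j}^{\,j-i-1}$ do produce $\bm y_\Gamma=(y_1,y_2,y_1^2,y_1^2,y_2,y_1)$, which satisfies the relations of Lemma \ref{lem y ssd}. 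This is non-circular within the paper, since the if-part of Lemma \ref{lem y ssd} is proved independently (by the population techniques of \cite{BM}). What each approach buys: \cite{BM} proves the statement from scratch, while your argument is shorter but ultimately still consumes the population input twice --- once through the if-part of Lemma \ref{lem y ssd}, and once through the polynomiality of the kernel.

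Two calibration points. First, the step you flag as the main obstacle is actually the easy, classical one: for any factorization $\mc D=(\pa_x-\ln' b_7)\cdots(\pa_x-\ln' b_1)$ one solves the first-order cascade ($u_1=b_1$, then $u_{i+1}$ from a first-order inhomogeneous equation) to get a flag of solutions with $\Wr(u_1,\dots,u_i)=b_1\cdots b_i$ up to constants; no population structure is needed for that. The genuinely nontrivial input is precisely the statement quoted in the paper just before the lemma --- that $\Ker\mc D_{\bm y}$ consists of polynomials --- since otherwise the cascade only produces multivalued functions and the Schubert-cell framework of Lemma \ref{lem y ssd} does not apply. Second, your identification of the exponents goes through Proposition \ref{prop diff oper} and Lemma \ref{exponents}, which presuppose the Bethe-vector machinery (in particular $\La-\alpha(\bm l)$ dominant integral, so that Theorem \ref{thm:bveigen} applies); this hypothesis is in force in the subsection where $\mc D_{\bm y}$ is introduced, but you should state it explicitly, or else read the exponents at $z_s$ directly off the factorization using the genericity of $\bm y$ together with $\Wr(X)=\mc T_1^{14}\mc T_2^7$ (which also rules out base points and ramification at the roots of $y_1,y_2$, and justifies $X/T_7=X$, hence purity). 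With these points made explicit, your argument is a complete and faithful reconstruction.
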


    \begin{rem}
    	Let $X$ be the kernel of $\mc D_{\bm y}$. The corresponding polynomials $T_1,\dots, T_7$ are given by $\mc T_1=T_1=T_3=T_4=T_6$, $\mc T_2=T_2=T_5$, and $T_7=1$. Moreover, there exists a basis $\Gamma=\{u_1,\dots,u_7\}$ of $X$ such that $\bs y_{\Gamma}=(y_1,y_2,y_1^2, y_1^2,y_2,y_1)$.
    \end{rem}
	 
	For a self-self-dual space the pair $(y_1,y_2)$ in Lemma \ref{lem y ssd} can be chosen generic.
	\begin{lem}\label{lem y}
	Suppose $X\in \Omega_{\bLa,\bm z}$ is a self-self-dual space. Then for any $z\in\C$ there exists a basis $\Gamma=\{u_1,\dots,u_7\}$ of $X$ such that $\bm y_\Gamma=(y_1,\dots,y_6)$ given by \eqref{eq y} satisfies $y_1^2=y_6^2=y_3=y_4$, $y_2=y_5$, and $y_i(z)\ne 0$ for $i=1,2$. 
	\end{lem}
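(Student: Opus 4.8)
The plan is to realize all self-self-dual bases of $X$ as the members of the G$_2$-population attached to $X$, and then to move within this population by the reproduction procedure, using the free parameter available at each reproduction step to push the two leading polynomials off the point $z$. First I would reduce to the case that $X$ has no base points: writing $X=g\cdot X'$ with $g$ the greatest common divisor of the polynomials in $X$ and $X'$ base-point free, $X'$ is again self-self-dual and the divided Wronskians $\bm y_\Gamma$ of \eqref{eq y} are unchanged by the common factor $g$, so it suffices to treat $X'$. By Lemma \ref{lem y ssd} there is a basis with $\bm y_\Gamma=(y_1,y_2,y_1^2,y_1^2,y_2,y_1)$, and (by the construction in its proof, where $u_1=y_1$, $u_2=\tilde y_1$, etc.) such bases are exactly the flags of the G$_2$-population originated at $\bm y=(y_1,y_2)$, all computed with respect to the \emph{fixed} data $\bLa,\bm z$. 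Since the population is an irreducible variety on whose generic locus the pairs are generic, I may assume that the starting pair $(y_1,y_2)$ is generic. It then remains to produce a member of the population whose two leading polynomials are nonzero at $z$.

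The main tool is a single reproduction step together with its free parameter. Reproducing $(y_1,y_2)$ in the first direction replaces $y_1$ by $\tilde y_1=\tilde y_1^{\,0}+c\,y_1$, where $\tilde y_1^{\,0}$ solves $\Wr(y_1,\tilde y_1^{\,0})=T_1 y_2$ and $c\in\C$ is free; for all but finitely many $c$ the new pair $(\tilde y_1,y_2)$ is again generic, hence again a member of the population, and again yields a self-self-dual basis of the same $X$. If $y_1(z)\ne 0$, then $\tilde y_1(z)=\tilde y_1^{\,0}(z)+c\,y_1(z)$ is a nonconstant affine function of $c$, so it is nonzero for generic $c$. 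If instead $y_1(z)=0$, then $\tilde y_1(z)=\tilde y_1^{\,0}(z)$ is independent of $c$, and I would show it is nonzero directly: evaluating $\Wr(y_1,\tilde y_1^{\,0})=T_1 y_2$ at $z$ gives $-y_1'(z)\,\tilde y_1^{\,0}(z)=T_1(z)\,y_2(z)$, and genericity of $(y_1,y_2)$ forces the right side to be nonzero, because $y_1(z)=0$ implies $z$ is not a root or singularity of $T_1$ (condition (ii)) and, as $y_1,y_2$ have no common roots (condition (iii)), $y_2(z)\ne 0$; since $y_1$ has only simple roots (condition (i)) we get $y_1'(z)\ne 0$, whence $\tilde y_1^{\,0}(z)\ne 0$. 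Either way one reaches a generic member with nonvanishing first coordinate at $z$.

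After this step the first coordinate is preserved by reproduction in the second direction, which replaces $y_2$ by $\tilde y_2=\tilde y_2^{\,0}+c\,y_2$ with $\Wr(y_2,\tilde y_2^{\,0})=T_2 y_1^3$. The identical argument applies: if $y_2(z)\ne 0$ one adjusts $c$; if $y_2(z)=0$, then by genericity $z$ is not a singularity of $T_2$, so $T_2(z)\,y_1(z)^3\ne 0$ (using the already-arranged $y_1(z)\ne 0$), and evaluating the Wronskian relation at $z$ gives $\tilde y_2^{\,0}(z)\ne 0$. The resulting generic member of the population therefore has both coordinates nonzero at $z$, and the associated basis $\Gamma$ satisfies $\bm y_\Gamma=(y_1',y_2',(y_1')^2,(y_1')^2,y_2',y_1')$ with $y_1'(z),y_2'(z)\ne 0$, which is the assertion of the lemma.

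I expect the main obstacle to be the bookkeeping that keeps every step inside the class of self-self-dual bases of the \emph{fixed} space $X$ with the \emph{fixed} divided Wronskian of $\bLa,\bm z$: one must check that the reproduction moves change only the flag (equivalently, preserve the operator $\mc D_{\bm y}$ and hence $X$, its singular points, and its exponents) while leaving $T_1,T_2$ fixed, so that the shifted weights in Theorem \ref{thm:reproducation} record only the change of degrees and not a change of data. A cleaner way to organize the conclusion, avoiding the explicit two-step construction, is geometric: the self-self-dual bases form the irreducible G$_2$-population $P$, and the computations above show that $\{y_1(z)=0\}$ and $\{y_2(z)=0\}$ are each proper closed subsets of $P$ (their complements are nonempty already along the reproduction $\bP^1$'s); irreducibility of $P$ then yields a common point of the two complements.
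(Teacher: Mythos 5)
Your proposal is correct and is in substance the paper's own argument: the paper proves this lemma by citing the proof of Theorem 5.7 of \cite{BM} together with Theorems 8.2 and 8.3 of \cite{MV2}, which supply exactly the structure you invoke --- the bases with the stated divided-Wronskian pattern correspond to members of the G$_2$-population attached to $X$, an irreducible (flag-variety) family on which non-vanishing at $z$ is achieved by moving along the reproduction pencils. Your explicit two-step computation with the free parameter $c$ (including the evaluation of $\Wr(y_1,\tilde y_1)=T_1y_2$ and $\Wr(y_2,\tilde y_2)=T_2y_1^3$ at $z$ using genericity conditions (i)--(iii)) is precisely the mechanism underlying those cited results, so nothing essential is missing.
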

	\begin{proof}
	The lemma follows from the proof of Theorem 5.7 in \cite{BM}, see also Theorems 8.2 and 8.3 of \cite{MV2}.
	\end{proof}
		
		\subsection{Self-self-dual Grassmannian and the strata}\label{sec ssd structure}

	Let $\mb S\Gr_d$ denote the set of all self-self-dual spaces in $\Gr_d$. We call $\mb S\Gr_d$ the \emph{self-self-dual Grassmannian}.
	The self-self-dual Grassmannian $\mb S\Gr_d$ is an algebraic subset of $\Gr_d$. 
	
	Let  $\Omega_{\bLa,\bm z}$, $\Omega_{\bLa}$ be as in \eqref{Omega}, \eqref{strata}. Denote by $\mb S\Omega_{\bLa,\bm z}$ the set of all self-self-dual spaces in $\Omega_{\bLa,\bm z}$ and by $\mb S\Omega_{\bLa}$ the set  of all self-self-dual spaces in $\Omega_{\bLa}$: 
	\begin{align*}
	\mb S\Omega_{\bLa,\bm z}=\Omega_{\bLa,\bm z}\bigcap \mb S\Gr_d \quad \text{and} \quad \mb S\Omega_{\bLa}=\Omega_{\bLa}\bigcap \mb S\Gr_d.
	\end{align*}
	For $\bLa$ such that $|\bLa|=7(d-7)$, we call the set $\mb S\Omega_{\bLa}$ a {\it $\g$-stratum} of the self-self-dual Grassmannian.
	A stratum $\mb S\Omega_{\bLa}$ does not depend on the order of the set of partitions $\bLa$. Each $\mb S\Omega_{\bLa}$ is a constructible subset of the Grassmannian $\Gr_d$ in Zariski topology.

	Let $\mu$ be a dominant integral $\g
	$-weight and $k\in\Z_{\gge 0}$. Define a
	partition $\mu_{A,k}$ with at most $7$ parts by the rule: $(\mu_{A,k})_7=k$ and
	\[
	(\mu_{A,k})_i-(\mu_{A,k})_{i+1}=\begin{cases}
		\langle \mu,\calpha_1\rangle,\quad &\text{if }i=1,3,4,6,\\
		\langle \mu,\calpha_{2}\rangle,\quad &\text{if }i=2,5.
	\end{cases}
	\]
	We call $\mu_{A,k}$ the partition \emph{associated with weight $\mu$ and integer $k$}.
	
	Let $\bLa=(\la^{(1)},\dots,\la^{(n)})$ be a sequence of dominant integral $\g$-weights and let $\bm k=(k_1,\dots,k_n)$ be an $n$-tuple of nonnegative integers. Then denote $\bLa_{A,\bm k}=(\la_{A,k_1}^{(1)},\dots,\la_{A,k_n}^{(n)})$ the sequence of partitions associated with $\la^{(s)}$ and $k_s$, $s=1,\dots,n$. We have
	\beq\label{La size}
	|\bLa_{A,\bs k}|=7\sum_{s=1}^n (k_s+ 2\langle \la^{(s)},\calpha_1\rangle+\langle \la^{(s)},\calpha_2\rangle).
	\eeq
	
	We use the notation $\mu_{A}=\mu_{A,0}$ and $\bLa_{A}=\bLa_{A,(0,\dots,0)}$.
	
	The following lemma is clear.
	\begin{lem}\label{lem sym weight new}
		Suppose $\bm\Xi$ is a sequence of partitions with at most $N$ parts such that $|\bm{\Xi}|=7(d-7)$ and $\mb S\Omega_{\bm\Xi}$ is nonempty, then $\bm\Xi$ has the form 
		$\bm\Xi=\bLa_{A,\bm k}$ for 
		a unique sequence of dominant integral $\g$-weights $\bLa=(\la^{(1)},\dots,\la^{(n)})$ and a unique $n$-tuple $\bm k$ of nonnegative integers. \qed
	\end{lem}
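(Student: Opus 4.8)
The plan is to extract the claimed normal form of each partition $\xi^{(s)}$ directly from the self-self-dual constraints on its parts, and then to match these parts term-by-term with the definition of the partition $\mu_{A,k}$ associated to a weight $\mu$ and integer $k$. Since $\mb S\Omega_{\bm\Xi}$ is nonempty, I would first fix some $\bm z=(z_1,\dots,z_n)\in\mathring{\bP}_n$ together with a self-self-dual space $X\in\mb S\Omega_{\bm\Xi,\bm z}$. Because self-self-duality is covariant under the $\mathrm{PGL}_2$-action on $\bP^1$ (the Wronskian construction and the form $\kappa$ transform equivariantly, and ramification points move by M\"obius transformations while their partitions are unchanged), I may assume without loss of generality that every $z_s$ lies in $\C$.

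The core step uses the necessary condition quoted before the lemma (Lemma 4.8 of \cite{BM}): for the self-self-dual space $X$ the associated polynomials satisfy $T_1=T_3=T_4=T_6$ and $T_2=T_5$. As the $z_s$ are now distinct finite points, equating the orders of vanishing at each $z_s$ converts these polynomial identities into the system $\xi_1^{(s)}-\xi_2^{(s)}=\xi_3^{(s)}-\xi_4^{(s)}=\xi_4^{(s)}-\xi_5^{(s)}=\xi_6^{(s)}-\xi_7^{(s)}$ and $\xi_2^{(s)}-\xi_3^{(s)}=\xi_5^{(s)}-\xi_6^{(s)}$, valid for each $s=1,\dots,n$. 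Writing $a_s=\xi_1^{(s)}-\xi_2^{(s)}$, $b_s=\xi_2^{(s)}-\xi_3^{(s)}$ and $k_s=\xi_7^{(s)}$ — all in $\Z_{\gge0}$ since $\xi^{(s)}$ is a partition — I would solve the recursion to obtain the difference pattern $(a_s,b_s,a_s,a_s,b_s,a_s)$ with bottom part $k_s$. This is exactly the pattern in the definition of $\mu_{A,k}$, with $\langle\mu,\calpha_1\rangle$ in positions $1,3,4,6$ and $\langle\mu,\calpha_2\rangle$ in positions $2,5$; so taking $\la^{(s)}$ to be the (automatically dominant integral) weight with $\langle\la^{(s)},\calpha_1\rangle=a_s$ and $\langle\la^{(s)},\calpha_2\rangle=b_s$ gives $\xi^{(s)}=\la^{(s)}_{A,k_s}$, hence $\bm\Xi=\bLa_{A,\bm k}$ with $\bLa=(\la^{(1)},\dots,\la^{(n)})$ and $\bm k=(k_1,\dots,k_n)$.

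Uniqueness is then immediate: the numbers $a_s,b_s$ are recovered from $\xi^{(s)}$ as its first two consecutive differences and $k_s$ as its last part, so each pair $(\la^{(s)},k_s)$ — and thus $\bLa$ and $\bm k$ — is determined by $\bm\Xi$. The only point that requires genuine care is the possibility $z_s=\infty$: under the convention making $x-z_s$ the constant $1$, the polynomials $\bm T$ are blind to the exponents at $\infty$, so the identities $T_i=T_j$ give no information there, and this is precisely why the reduction to all finite $z_s$ via $\mathrm{PGL}_2$-invariance in the first step is needed. Everything else is a routine linear computation, which is why the lemma is stated as clear.
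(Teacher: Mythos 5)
Your proposal is correct and is essentially the intended argument: the paper offers no proof (the lemma is declared clear), and its substance is exactly your translation of the identities $T_1=T_3=T_4=T_6$, $T_2=T_5$ (Lemma 4.8 of \cite{BM}) into the difference pattern $(a_s,b_s,a_s,a_s,b_s,a_s)$ with bottom part $k_s$ at each $z_s$, which inverts the definition of $\la^{(s)}_{A,k_s}$, with dominance free from monotonicity of the partition and uniqueness from reading off the first two consecutive differences and $\xi^{(s)}_7$.

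One caveat on your treatment of $z_s=\infty$: the full $\mathrm{PGL}_2$-equivariance you invoke is asserted, not verified, and it is strictly stronger than anything the paper states --- Remark \ref{translations} records invariance only under affine substitutions $x\mapsto ax+b$, which fix $\infty$. The claim is in fact true: under $f(x)\mapsto (cx+d)^{\,d-1}f\bigl(\tfrac{ax+b}{cx+d}\bigr)$ the Wronskian and divided Wronskians transform by nonzero constants and powers of $(cx+d)$, so the conditions $X=g\cdot X^{\dag}$, isotropy of $U$ with respect to $\kappa$, and $\Wr(U)=T_1^2T_2f^2$ are all preserved; but note that \emph{pure} self-(self-)duality is not preserved when the partition at $\infty$ has nonzero bottom part (the image acquires a base point), so it is the base-point-insensitive notion of self-self-duality, which is what defines $\mb S\Gr_d$, that carries the equivariance. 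If you prefer not to prove the M\"obius covariance, the point at $\infty$ can instead be handled in place: the partition there is encoded in the degree sequence of $X$, and the same G$_2$ pattern follows by degree counts in the self-dual and self-self-dual relations (in \cite{BM} the singular point at infinity is treated on the same footing as the finite ones). Either way, your argument stands.
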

	Therefore the $\g$-strata are effectively parameterized by sequences of dominant integral $\g$-weights and tuples of nonnegative integers. 
	In what follows we write $\mb S\Omega_{\bLa,\bm k}$ for $\mb S\Omega_{\bLa_{A,\bm k}}$,  $\mb S\Omega_{\bLa,\bm z}$ for  $\mb S\Omega_{\bLa_{A},\bm z}$, $\Omega_{\bLa,\bm k}$ for $\Omega_{\bLa_{A,\bm k}}$, etc. 
	
	\subsection{Self-self-dual spaces and the Gaudin model}
	Recall that the objective of the Gaudin model is to study the common eigenvectors and eigenvalues of the Bethe algebras $\mc B$ acting on $\g[t]$-modules, see Section \ref{sec:Bethe alg}. The following theorem connects self-self-dual spaces of polynomials with given ramification conditions and the Gaudin model associated to $\g$.
	
	\begin{thm}\label{bi rep ssgr}
		Let $\bLa=(\la^{(1)},\dots,\la^{(n)})$ be a sequence of dominant integral $\g$-weights, $\bm z\in \mathring{\bP}_n$, and $\bm T$ the sequence of polynomials associated with $\bLa_A,\bm z$. Then for any $\mc B$-eigenvector $v\in (V_{\bLa,\bm z})^{\g}$, we have $${\rm{Ker}}\ \big((T_1^2T_2)\cdot\mc D_v\cdot (T_1^2T_2)^{-1}\big)\in \mb S\Omega_{\bLa,\bs z}.$$
		
		Moreover, if $|\bLa_A|=7(d-7)$, then this defines a bijection between the joint eigenvalues of $\mc B$ on $(V_{\bLa,\bm z})^{\g}$ and $\mb S\Omega_{\bLa,\bs z}\subset\Gr_d$.
	\end{thm}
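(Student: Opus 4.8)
The plan is to realize the map of the theorem as the composition of two known correspondences and to read off the ramification data from the exponents. Since conjugation acts on kernels by $\Ker(g\cdot\mc D_v\cdot g^{-1})=g\cdot\Ker\mc D_v$, I set $\tl{\mc D}_v=(T_1^2T_2)\cdot \mc D_v\cdot(T_1^2T_2)^{-1}$, so that $\Ker\tl{\mc D}_v=(T_1^2T_2)\cdot\Ker\mc D_v$, and I first determine its singular data. A vector $v\in(V_{\bLa,\bm z})^{\g}$ is the same as a singular vector of weight $0$, so Lemma~\ref{exponents} applies with $\la=0$: the exponents of $\mc D_v$ at $\infty$ are $-6,-5,\dots,0$, and at $z_s$ they are $-2\la_1^{(s)}-\la_2^{(s)},\dots,2\la_1^{(s)}+\la_2^{(s)}+6$. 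Because each $k_s=0$ we have $T_1^2T_2=\mc T_1^2\mc T_2$, a polynomial of degree $D=\sum_s(2\la_1^{(s)}+\la_2^{(s)})$ vanishing to order $2\la_1^{(s)}+\la_2^{(s)}$ at $z_s$. Shifting the exponents by these orders, I would check that the exponents of $\tl{\mc D}_v$ at $z_s$ become exactly $\{(\la^{(s)}_A)_i+7-i:i=1,\dots,7\}$ and that its polynomial solutions acquire degrees $D,D+1,\dots,D+6$; this simultaneously forces $d=D+7$ (so $|\bLa_A|=7(d-7)$ is automatic) and shows $\Ker\tl{\mc D}_v\in\Omega_{\bLa_A,\bm z}$, matching the required Schubert conditions at each $z_s$ and at $\infty$.

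The two remaining points of well-definedness---that $\Ker\tl{\mc D}_v$ consists of polynomials and that it is self-self-dual---I would get from the oper picture. By \cite{R} the joint eigenvalue carried by $v$ is the scalar form, in the $7$-dimensional representation $\pi$, of a $G_2$-oper on $\bP^1$ with regular singularities at $z_1,\dots,z_n,\infty$ and trivial monodromy, and our normalization of $B_2,\dots,B_7$ via Remark~\ref{rem B} makes $\mc D_v$ exactly this operator. Trivial monodromy together with the integral exponents above rules out logarithmic solutions, so after conjugation the solutions are genuine polynomials. Reducing the oper structure through $\pi$, as in Section~\ref{sec oper}, endows $\Ker\tl{\mc D}_v$ with the Wronskian symmetries of a $G_2$-structure: concretely it yields a basis $\Gamma$ whose associated $\bm y_\Gamma$ satisfies $y_1^2=y_6^2=y_3=y_4$ and $y_2=y_5$, which by Lemma~\ref{lem y ssd} is exactly self-self-duality (here $T_7=1$, so self-self-dual coincides with pure self-self-dual). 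Thus $\Ker\tl{\mc D}_v\in\mb S\Omega_{\bLa,\bm z}$. I would sanity-check the normalization on Bethe eigenvectors, where Proposition~\ref{prop diff oper} gives $\tl{\mc D}_{\psi(\bm z,\bm t)}=\mc D_{\bm y}$ and Lemma~\ref{lem ker ssd} already produces a pure self-self-dual kernel, and against the explicit solutions of Appendix~\ref{sec:solutions}.

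For the bijection, injectivity is immediate: a space $X\in\mb S\Omega_{\bLa,\bm z}$ recovers $\Ker\mc D_v=(T_1^2T_2)^{-1}X$ and hence the unique monic order-$7$ operator with that kernel, whose coefficients are the eigenvalues $h_2(x),\dots,h_7(x)$ of $B_2(x),\dots,B_7(x)$; since the coefficients of $B_2(x)$ and $B_6(x)$ alone generate $\mc B$ by Lemma~\ref{B2 and B6}, the space determines the entire joint eigenvalue.

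Surjectivity is the step I expect to be the real obstacle. A cardinality count is not available, because the spectrum is not known to be simple, so instead I would run the dictionary of the second paragraph backwards: Section~\ref{sec oper} turns a given $X\in\mb S\Omega_{\bLa,\bm z}$ into a $G_2$-oper with the prescribed singular data and trivial monodromy, and the surjectivity (realization) part of \cite{R} guarantees that every such oper occurs in the spectrum of $\mc B$ on $(V_{\bLa,\bm z})^{\g}$. Composing the two bijections---spectrum $\leftrightarrow$ opers from \cite{R} and opers $\leftrightarrow$ self-self-dual spaces from Section~\ref{sec oper}---then yields the theorem, with the explicit two-point solutions of Section~\ref{sec strategy} and Appendix~\ref{sec:solutions} pinning down the correspondence unambiguously on the building block $V_\la\otimes V_{\omega_2}$. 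The delicate points are verifying that the oper-to-space reduction of Section~\ref{sec oper} is compatible with our specific generators and the conjugation factor $T_1^2T_2$, and that \cite{R} indeed yields realization of every admissible oper rather than merely an embedding of the spectrum into opers.
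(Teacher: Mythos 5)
Your architecture is close to the paper's, but the load-bearing step of your argument is precisely the point the paper is structured to avoid, and you leave it unproved. Everything in your second paragraph (polynomiality and self-self-duality of $\Ker\,\mc D_v$ for an \emph{arbitrary} eigenvector) and your surjectivity argument both rest on the assertion that ``our normalization of $B_2,\dots,B_7$ via Remark~\ref{rem B} makes $\mc D_v$ exactly'' the scalar form $L_{[\nabla]}$ of the $^t\g$-oper attached to $v$ by \cite{R}. This compatibility between the abstract eigenvalue--oper correspondence of \cite{R} and the explicit \cite{MRR} generators $B_i(x)$ is established in the paper only for Bethe vectors (Proposition~\ref{prop diff oper}, whose proof goes through the explicit computation in Section 5 of \cite{MRR}); you flag it yourself as ``delicate'' at the end but never discharge it, so as written your proof of the first statement for non-Bethe eigenvectors is circular with respect to the very identification at issue. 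The paper's actual route around this is different and is the genuinely new content of the proof: starting from the Bethe-vector case (Proposition~\ref{prop diff oper} plus Lemma~\ref{lem ker ssd}), it uses completeness of the Bethe ansatz for $\bigotimes_s V_{\omega_2}(\tl z_s)$ at generic points (Theorem~\ref{thm G completeness}), the fact that every finite-dimensional irreducible $\g$-module embeds in a tensor power of $V_{\omega_2}$, and the closedness of $\mb S\Gr_d$ to conclude by a limiting argument that $\Ker\,(T_1^2T_2\cdot\mc D_v\cdot T_1^{-2}T_2^{-1})\in\mb S\Gr_d$ for every eigenvector, with Lemma~\ref{exponents} (which you do use correctly, with $\la=0$) pinning down the stratum $\mb S\Omega_{\bLa,\bm z}$. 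No identification of $\mc D_v$ with an oper is needed for this.

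Your second error is the dismissal of the cardinality argument: you write that ``a cardinality count is not available, because the spectrum is not known to be simple,'' but the bijection in the theorem is between joint \emph{eigenvalues} and spaces, not eigenvectors, so simplicity is irrelevant. Since $|\bLa_A|=7(d-7)$ forces $\Omega_{\bLa_A,\bm z}$, hence $\mb S\Omega_{\bLa,\bm z}$, to be finite, the paper concludes exactly by counting: your injectivity argument via Lemma~\ref{B2 and B6} (which is correct and is what the paper intends) gives an injection of the set of eigenvalues into $\mb S\Omega_{\bLa,\bm z}$, while Corollary 3.3 of \cite{R} and Theorem~\ref{thm bij oper sd} give $\#\{\text{eigenvalues}\}=\#\Opg(\bP^1)_{\bLa,\bm z}=\#\mb S\Omega_{\bLa,\bm z}$, so the injection is a bijection. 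This sidesteps entirely the need to check that running the dictionary backwards (space $\to$ oper $\to$ eigenvalue) inverts the map $v\mapsto\Ker\,(T_1^2T_2\cdot\mc D_v\cdot T_1^{-2}T_2^{-1})$, which in your element-by-element version again requires the unproved compatibility. In short: your exponent computation and injectivity step are sound, but you should replace the direct oper identification by the paper's completeness-plus-closure argument for the first statement, and replace your ``backwards dictionary'' by the finite counting argument for surjectivity.
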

	\begin{proof}
		Theorem \ref{bi rep ssgr} is deduced from \cite{R} in Section \ref{sec proof G}.
	\end{proof}
	
	\begin{rem}
	It follows from Corollary 3.4 of \cite{R} that the Bethe algebra $\mc B$ has simple spectrum on $(V_{\bLa,\bm z})^\g$ for generic $\bm z$. In particular, the map $v\to {\rm{Ker}}\ \big((T_1^2T_2)\cdot\mc D_v\cdot (T_1^2T_2)^{-1}\big)$ also gives a bijection between $\mc B$-eigenvectors in $(V_{\bLa,\bm z})^\g$ and $\mb S\Omega_{\bLa,\bs z}\subset\Gr_d$ when $\bm z$ is generic. We expect this is true for all $\bm z\in\mathring{\bP}_n$.	
	\end{rem}
	
	We also have the following lemma from \cite{R}.
	
	\begin{lem}\label{lem BC completeness} Let $\bm z$  be a generic point in ${\mathring{\bP}}_n$. Then the action of the Bethe algebra $\mc B$ on $(V_{\bLa,\bm z})^{\g}$ is diagonalizable and have simple spectrum. In particular, this statement holds for any sequence $\bm z\in \R{\mathring{\bP}}_n$.\qed
	\end{lem}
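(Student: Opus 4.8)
The plan is to treat the two assertions separately. The simplicity of the joint spectrum of $\mc B$ on $(V_{\bLa,\bm z})^\g$ for generic $\bm z\in\mathring{\bP}_n$ is precisely Corollary 3.4 of \cite{R}, as already recorded in the remark following Theorem \ref{bi rep ssgr}; since a commuting family acting with simple joint spectrum is automatically semisimple, this simultaneously yields diagonalizability and settles the generic case. What remains is to establish both properties for \emph{every} $\bm z\in\R\mathring{\bP}_n$, as a fixed real configuration need not lie in the Zariski-open locus on which Corollary 3.4 applies.

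For the real case I would first introduce a positive-definite Hermitian structure. Equip each irreducible factor $V_{\la^{(s)}}$ with its contravariant Shapovalov form, built from the Chevalley anti-involution $\theta$ satisfying $\theta(e_i)=f_i$, $\theta(f_i)=e_i$, $\theta(h)=h$ and normalized so that the associated Hermitian form is positive-definite; the tensor product then carries a positive-definite Hermitian form whose restriction to $(V_{\bLa,\bm z})^\g$ is nondegenerate. The key point is that, when all $z_s$ are real, the generators $S_{ij}$ of $\mc B$ act by operators that are self-adjoint for this form. At the quadratic level this is the classical fact that the Gaudin Hamiltonians $\mc H_i(\bm z)$ are self-adjoint, because $\Omega$ is invariant under $\theta\otimes\theta$ and the differences $z_i-z_j$ are real; the same invariance, applied to the higher series $S_2(x)$ and $S_6(x)$ assembled from $G(x)$, propagates self-adjointness to all of $\mc B$. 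By the spectral theorem the resulting commuting self-adjoint operators are simultaneously diagonalizable with real eigenvalues, which proves diagonalizability for all real $\bm z$.

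Diagonalizability does not by itself force the eigenspaces to be one-dimensional, and the remaining, most delicate step is to prove simplicity at each real point. For a diagonalizable commuting family, simplicity of the spectrum is equivalent to the module being cyclic, so it suffices to show that $(V_{\bLa,\bm z})^\g$ is a cyclic $\mc B$-module for every $\bm z\in\R\mathring{\bP}_n$. This cyclicity is the technical core of \cite{R}: it is obtained by realizing $\mc B$ through the Feigin--Frenkel center at the critical level, identifying its joint spectrum with a space of opers, and degenerating the real configuration to a fully separated limit in which cyclicity is transparent, the positive-definiteness of the form ensuring that no eigenline collapses as one deforms back to the given point. I expect this cyclicity at \emph{all} real configurations, rather than merely generic ones, to be the main obstacle; granting it, diagonalizability together with cyclicity yields simple spectrum for every $\bm z\in\R\mathring{\bP}_n$, completing the proof.
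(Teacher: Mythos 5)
Your proposal is correct and takes essentially the same route as the paper, which proves this lemma purely by citation to \cite{R}: the generic case is Corollary 3.4 there, and the case of arbitrary $\bm z\in\R{\mathring{\bP}}_n$ is Rybnikov's main theorem, whose proof is exactly the argument you sketch (self-adjointness of $\mc B$ with respect to the positive-definite tensor Shapovalov form giving diagonalizability, plus cyclicity of the $\mc B$-module at every real configuration, established by degenerating within the compactified family of parameters). Since you, like the paper, ultimately defer that cyclicity core to \cite{R}, the two arguments coincide in substance, with your write-up merely making the internals of \cite{R} explicit.
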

	
	From Theorem \ref{bi rep ssgr}, we observe the following relation between Gaudin models associated to $\g$ and to $\gl_7$. Let $\bLa=(\la^{(1)},\dots,\la^{(n)},\la)$ be a sequence of dominant integral $\g$-weights and $\bm z=(z_1,\dots,z_n,\infty)\in\mathring{\bP}_{n+1}$. Assume that $|\bLa_A|=7(d-7)$. Set $k=d-7-2(2\la_1+\la_2)$. Consider the $\g[t]$-module $\bs V=\bigotimes_{s=1}^n V_{\la^{(s)}}(z_s)$ and the $\gl_7[t]$-module $\bs V_A=\bigotimes_{s=1}^n V_{\la_A^{(s)}}(z_s)$. Assume  $(\bs V)^\sing_{\la}\neq 0$ and that $\mc B$ acting on it is diagonalizable. Hence $\mc B$ has simple spectrum on $(\bs V)^\sing_{\la}$ by Corollary 3.4 of \cite{R}.
	
	Let $\mc B_A$ be the Bethe subalgebra of $\mc U(\gl_7[t])$, see \cite{MTV2}. Then the eigenvectors of $\mc B_A$ in $(\bs V_A)^{\sing}_{\la_{A,k}}$ (the subspace of $\gl_7$-singular vectors in $\bs V_A$ of weight $\la_{A,k}$) correspond to spaces of polynomials in $\Omega_{\bLa,\bs z}$. Each $\mathcal B$-eigenvector $v$ in $(\bs V)^\sing_\la$ corresponds to a self-self-dual space in $\mb S\Omega_{\bLa,\bs z}\subset\Omega_{\bLa,\bs z}$ and therefore to a $\mathcal B_A$-eigenvector $v_A$ in $(\bs V_A)^{\sing}_{\la_{A,k}}$. Moreover, the differential operator $T_2T_1^2\cdot\mathcal D_v \cdot T_2^{-1}T_1^{-2}$ coincides with the corresponding $\gl_7$-operator $\mc D^A_{v_A}$, see (5.1) in \cite{MTV2}. In particular, the eigenvalues of $\mc B$ on $v$ can be identified with eigenvalues of $\mc B_A$ on $v_A$. 
	
	Thus the $\mc B_A$-submodule of $(\bs V_A)^{\sing}_{\la_{A,k}}$ spanned by vectors associated to self-self-dual spaces is canonically identified with the $\mathcal B$-module $(\bs V)^\sing_\la$.
	
	This correspondence is not related to the embedding $\pi$ and we have no explanation for this phenomenon on the level of Lie algebras. This is similar to the situation in types B and C, see Theorem A.10 in \cite{LMV2}.
	
	It would be interesting to extend this correspondence to general (non-diagonalizable) case. It is also interesting to find a criterion for a $\mc B_A$-eigenvector to correspond to a self-self-dual space.
	
	\subsection{Properties of the strata}\label{sec prop strata} We describe simple properties of the strata  $\mb S\Omega_{\bLa,\bm k}$.
	
	Given $\bLa,\bs k,\bs z$, define $\tilde{\bLa},\tilde {\bm k}, \tilde {\bs z}$ by removing all zero components, 
	that is the ones with both $\la^{(s)}=0$ and $k_s=0$. 
	Then $\mb S\Omega_{\tl{\bLa},\tilde {\bs k},\tl{\bm z}}=\mb S\Omega_{\bLa,\bs k,\bm z}$ and  $\mb S\Omega_{\tl{\bLa},\tilde {\bs k}}=\mb S\Omega_{\bLa,\bs k}$. 	Also, $|\bLa_{A,\bm k}|$ is divisible by $7$.

	We say that $(\bLa,\bm k)$ is $d$-\emph{nontrivial} if and only if $(V_\bLa)^{\g} \ne 0$, $|\la_{A,k_s}^{(s)}|>0$, $s=1,\dots,n$, and $|\bLa_{A,\bm k}|=7(d-7)$. 
	
	Theorem \ref{bi rep ssgr} implies that if $(\bLa,\bm k)$ is $d$-nontrivial then the corresponding stratum $\mb S\Omega_{\bs \La,\bm k}\subset \mb S\Gr_d$ is nonempty and, conversely, that if $(\bLa,\bm k)$ is a nonempty stratum of $\mb S\Gr_d$ then the pair $(\tilde{\bLa},\tilde {\bm k})$  obtained by removing all zero components, is $d$-nontrivial. 

	\medskip  
	
	Note that $|\bLa_{A,\bm k}|=|\bLa_{A}|+7|\bs k|$, where $|\bs k|=k_1+\dots +k_n$. In particular, if $(\bLa,\bs 0)$ is $d$-nontrivial then $(\bLa,\bs k)$ is $(d+|\bs k|)$-nontrivial.
	Further, there exists a bijection between $\Omega_{\bLa,\bm z}$ in $\Gr_d$ and $\Omega_{\bLa,\bm k,\bm z}$ in $\Gr_{d+|\bs k|}$ given by
	\beq\label{mult}
	\Omega_{\bLa,\bm z}\rightarrow \Omega_{\bLa,\bm k,\bm z},\quad X \mapsto \prod_{s=1}^n(x-z_s)^{k_s}\cdot X.
	\eeq
	Moreover, \eqref{mult} restricts to a bijection between $\mb S\Omega_{\bLa,\bm z}\subset\mb S\Gr_d$ and $\mb S\Omega_{\bLa,\bm k,\bm z}\subset \mb S\Gr_{d+|\bs k|}$. 
	
	\medskip
	
	Let $\mu^{(1)},\dots,\mu^{(a)}$ be all distinct partitions in $\bLa_{A,\bm k}$. Let $n_i$ be the number of occurrences of ${\mu}^{(i)}$ in $\bLa_{A,\bm k}$, then $\sum_{i=1}^an_i=n$. Denote $\bm n=(n_1,\dots,n_a)$. We write $\bLa_{A,\bm k}$ in the following order: $\la_{A,k_i}^{(i)}=\mu^{(j)}$ for $\sum_{s=1}^{j-1}n_s+1\lle i\lle \sum_{s=1}^{j}n_s$, $j=1,\dots,a$.
	
	 Let $\mathfrak S_{\bm n;n_i}$ be the subgroup of the symmetric group $\mathfrak S_n$ permuting $\{n_1+\cdots+n_{i-1}+1,\dots,n_1+\cdots+n_{i}\}$, $i=1,\dots,a$. Then the group $\mathfrak S_{\bm n}=\mathfrak S_{\bm n;n_1}\times \mathfrak S_{\bm n;n_2}\times \dots\times \mathfrak S_{\bm n;n_a}$ acts freely on ${\mathring{\bP}}_n$ and on $\R{\mathring{\bP}}_n$. Denote by ${\mathring{\bP}}_n/\mathfrak S_{\bm n}$ and $\R{\mathring{\bP}}_n/\mathfrak S_{\bm n}$ the sets of orbits. 
	\begin{prop}\label{prop bij BC}
		Suppose $(\bLa,\bm k)$ is $d$-nontrivial. The set $\mb S\Omega_{\bLa,\bm k}$ is a ramified covering of ${\mathring{\bP}}_n/\mathfrak S_{\bm n}$.   Moreover, the degree of the covering is equal to $\dim(V_\bLa)^{\g}$.  In particular, $\dim \mb S\Omega_{\bLa,\bm k}=n$. Over $\R{\mathring{\bP}}_n/\mathfrak S_{\bm n}$, this covering is unramified of the same degree, moreover all points in fibers are real. 
	\end{prop}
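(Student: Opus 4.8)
The plan is to exhibit $\mb S\Omega_{\bLa,\bm k}$ as the total space of a finite map to ${\mathring{\bP}}_n/\mathfrak S_{\bm n}$ recording the ramification points, and to read off its fibers from the bijection of Theorem \ref{bi rep ssgr} combined with the simplicity of the $\mc B$-spectrum in Lemma \ref{lem BC completeness}. First I would construct the map. Each $X\in\mb S\Omega_{\bLa,\bm k}$ lies in $\mb S\Omega_{\bLa,\bm k,\bm z}$ for a tuple $\bm z\in{\mathring{\bP}}_n$ that is intrinsic to $X$: its entries are precisely the ramification points of $X$, and the partition attached to each point is recovered from $X$ alone. Since the partitions $\mu^{(i)}$ in $\bLa_{A,\bm k}$ may repeat, only the unordered datum, that is the class of $\bm z$ in ${\mathring{\bP}}_n/\mathfrak S_{\bm n}$, is well defined, yielding an algebraic map
\[
\pi\colon \mb S\Omega_{\bLa,\bm k}\longrightarrow {\mathring{\bP}}_n/\mathfrak S_{\bm n}
\]
whose fiber over the class of $\bm z$ is $\mb S\Omega_{\bLa,\bm k,\bm z}$. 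Because $|\bLa_{A,\bm k}|=7(d-7)$, the set $\Omega_{\bLa,\bm k,\bm z}$ is finite, hence so is every fiber.

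Next I would compute the fiber cardinality. The bijection \eqref{mult}, which multiplies a space by the fixed polynomial $\prod_{s}(x-z_s)^{k_s}$, identifies $\mb S\Omega_{\bLa,\bm k,\bm z}$ with $\mb S\Omega_{\bLa,\bm z}\subset\Gr_{d-|\bm k|}$ and thus preserves fiber sizes, reducing us to $\bm k=\bm 0$. Since $|\bLa_A|=7\big((d-|\bm k|)-7\big)$, Theorem \ref{bi rep ssgr} applies and puts $\mb S\Omega_{\bLa,\bm z}$ in bijection with the set of joint eigenvalues of $\mc B$ on $(V_{\bLa,\bm z})^{\g}$. Therefore
\[
\#\,\mb S\Omega_{\bLa,\bm k,\bm z}=\#\{\text{joint }\mc B\text{-eigenvalues on }(V_{\bLa,\bm z})^{\g}\}\ \lle\ \dim (V_{\bLa})^{\g},
\]
with equality exactly when $\mc B$ acts with simple spectrum. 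As $(V_\bLa)^{\g}\neq 0$ by $d$-nontriviality, every fiber is nonempty and $\pi$ is surjective.

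To obtain the covering statement, recall from Lemma \ref{lem BC completeness} that $\mc B$ acts diagonalizably with simple spectrum for generic $\bm z$, and also for every $\bm z\in\R{\mathring{\bP}}_n$. Over the Zariski-open dense locus of generic $\bm z$ the fiber has the maximal constant cardinality $\dim(V_\bLa)^{\g}$, so $\pi$ is a ramified covering of that degree; on the complementary locus, where the spectrum degenerates, the number of joint eigenvalues drops and $\pi$ ramifies. Since the base ${\mathring{\bP}}_n/\mathfrak S_{\bm n}$ is irreducible of dimension $n$ and all fibers are finite and nonempty, $\dim\mb S\Omega_{\bLa,\bm k}=n$. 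As the real locus $\R{\mathring{\bP}}_n/\mathfrak S_{\bm n}$ also consists of simple-spectrum points, the covering is unramified there.

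It remains to prove that over real parameters the fiber points are real spaces of polynomials. For $\bm z\in\R{\mathring{\bP}}_n$ the Bethe algebra acts on $V_{\bLa,\bm z}$ by operators that are self-adjoint with respect to the positive-definite tensor Shapovalov Hermitian form; hence all joint eigenvalues are real, and each eigenvalue yields a differential operator $\mc D_v$ with real rational coefficients. Conjugating by the real polynomial $T_1^2T_2$ preserves reality, so $\Ker\big((T_1^2T_2)\,\mc D_v\,(T_1^2T_2)^{-1}\big)$ is spanned by polynomials with real coefficients and is a real point of $\Gr_d$. I expect this reality step to be the main obstacle: the remainder is bookkeeping around Theorem \ref{bi rep ssgr} and the cited simplicity of spectrum, whereas reality genuinely requires the self-adjointness of the entire Bethe algebra, not merely of the quadratic Gaudin Hamiltonians, for real evaluation parameters.
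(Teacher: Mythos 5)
Your construction of the covering map, the identification of the fibers with $\mb S\Omega_{\bLa,\bm k,\bm z}$, the reduction to $\bm k=\bm 0$ via \eqref{mult}, and the degree count via Theorem \ref{bi rep ssgr} together with the generic and real simple-spectrum statements of Lemma \ref{lem BC completeness} reproduce exactly the skeleton the paper intends: its entire proof is the one-line citation of Theorem \ref{bi rep ssgr}, Lemma \ref{lem BC completeness}, and Theorem 1.1 of \cite{MTV1}. The one place where you diverge is the reality of the fiber points, and it is precisely the place you flagged as the main obstacle. You propose to deduce reality from self-adjointness of the full Bethe algebra of $\g=\mathrm{G}_2$ with respect to a positive-definite tensor Shapovalov Hermitian form at real $\bm z$, so that all joint eigenvalues are real, $\mc D_v$ has real coefficients, and its (conjugation-invariant, finite-dimensional) kernel of polynomials acquires a real basis. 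That chain of implications is fine once the self-adjointness is granted, but the self-adjointness of \emph{all} generators $S_{2\,i}$, $S_{6\,j}$ (not just the quadratic Hamiltonians) is neither proved in this paper nor available off the shelf in its bibliography in the form you need; supplying it would be genuinely new work, so as written your proof has a gap at exactly this step.

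The paper avoids the issue entirely with a type-A input: any $X\in\mb S\Omega_{\bLa,\bm k,\bm z}$ is in particular a point of $\Gr(7,d)$ with $\Wr(X)=\prod_{s=1}^n(x-z_s)^{7(2\la^{(s)}_1+\la^{(s)}_2+k_s)}$, which has only real roots when $\bm z\in\R\mathring{\bP}_n$ (the convention $x-z_s\equiv 1$ for $z_s=\infty$ handles the point at infinity automatically). Theorem 1.1 of \cite{MTV1} states that a space of polynomials whose Wronskian has only real roots is real, so reality of the fibers follows verbatim, with no $\mathrm{G}_2$-specific Hermitian analysis. Note the tradeoff: your route, if the self-adjointness were established, would prove more (reality of the $\mc B$-eigenvalues themselves, not just of the spaces), which is morally how \cite{MTV1} was proved in type A; the paper's route buys the needed statement for free by forgetting the $\mathrm{G}_2$-structure and viewing the self-self-dual space inside the ordinary Grassmannian. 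To repair your write-up with minimal change, replace the Shapovalov paragraph by the Wronskian argument above, or else explicitly add the unproven self-adjointness as a cited or proved input.
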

	\begin{proof}
		The proposition follows from the combination of Theorem \ref{bi rep ssgr}, Lemma \ref{lem BC completeness}, and Theorem 1.1 of \cite{MTV1}.
	\end{proof}

	We find the strata $\mb S\Omega_{\bLa,\bm k}\subset\mb S\Gr_d$ of the largest dimension. 
	\begin{lem}\label{thm dim sgr}
		The $d$-nontrivial strata $\mb S\Omega_{\bs \La,\bm k}\subset\mb S\Gr_d$ with the largest dimension have $(\la^{(s)},k_s)$ equal to either $(\omega_2,0)$ or $(0,1)$, $s=1,\dots,d-7$. Each such stratum is nonempty if the number of $s\in\{1,\dots,d-7\}$ with $(\la^{(s)},k_s)=(\omega_2,0)$ is not $1$, and then it has dimension $d-7$. In particular, there is at least one nonempty stratum of this dimension, and if $d>8$ then more than one.
	\end{lem}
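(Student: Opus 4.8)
The plan is to split the statement into a dimension count that fixes the combinatorial shape of the top-dimensional strata, and a nonemptiness analysis that selects which of these shapes are realized. By Proposition \ref{prop bij BC}, a $d$-nontrivial pair $(\bLa,\bm k)$ gives a stratum $\mb S\Omega_{\bLa,\bm k}$ of dimension equal to the number $n$ of its components, all of which are nonzero by $d$-nontriviality; so maximizing the dimension means maximizing $n$. First I would introduce $w_s=k_s+2\langle\la^{(s)},\calpha_1\rangle+\langle\la^{(s)},\calpha_2\rangle$. A direct computation of parts shows $|\la_{A,k_s}^{(s)}|=7w_s$, so the size formula \eqref{La size} together with $|\bLa_{A,\bm k}|=7(d-7)$ reads $\sum_{s=1}^n w_s=d-7$, while the condition $|\la_{A,k_s}^{(s)}|>0$ from the definition of $d$-nontriviality is exactly $w_s\gge 1$. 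Hence $n\lle d-7$, with equality if and only if every $w_s=1$.

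The equation $w_s=1$ has precisely two solutions in nonnegative integers $\big(k_s,\langle\la^{(s)},\calpha_1\rangle,\langle\la^{(s)},\calpha_2\rangle\big)$, namely $(1,0,0)$ and $(0,0,1)$, that is $(\la^{(s)},k_s)=(0,1)$ or $(\la^{(s)},k_s)=(\omega_2,0)$. This proves that the top-dimensional configurations are exactly those described, with $n=d-7$ components of these two types, hence of dimension $d-7$. It remains to decide nonemptiness, for which I would use the equivalence furnished by Theorem \ref{bi rep ssgr}: a stratum is nonempty if and only if the pair is $d$-nontrivial. For a top-dimensional configuration the size and support conditions hold automatically, so only $(V_\bLa)^{\g}\ne 0$ must be checked. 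Writing $m$ for the number of components equal to $(\omega_2,0)$, the remaining $d-7-m$ components carry the zero weight, so $V_\bLa\cong V_{\omega_2}^{\otimes m}$ as a $\g$-module, and the question becomes whether $(V_{\omega_2}^{\otimes m})^{\g}\ne 0$.

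I would settle this multiplicity from the decomposition \eqref{eq:dec of B}. For $m=0$ the module $V_\bLa$ is trivial, so $(V_\bLa)^{\g}=V_\bLa\ne 0$. For $m=1$, $V_{\omega_2}$ is a nontrivial irreducible, so $(V_{\omega_2})^{\g}=0$. Setting $\la=\omega_2$ in \eqref{eq:dec of B} gives $V_{\omega_2}^{\otimes 2}=V_{2\omega_2}\oplus V_{\omega_1}\oplus V_{\omega_2}\oplus V_0$, so both $V_0$ and $V_{\omega_2}$ occur in $V_{\omega_2}^{\otimes 2}$. The occurrence of $V_0$ yields $(V_{\omega_2}^{\otimes m})^{\g}\supseteq (V_{\omega_2}^{\otimes(m-2)})^{\g}$ for $m\gge 2$; together with the directly checked base cases $m=2$ (which contains $V_0$) and $m=3$, where $V_{\omega_2}^{\otimes 3}=V_{\omega_2}^{\otimes 2}\otimes V_{\omega_2}\supseteq V_{\omega_2}\otimes V_{\omega_2}\supseteq V_0$, induction in steps of two gives $(V_{\omega_2}^{\otimes m})^{\g}\ne 0$ for all $m\gge 2$. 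Hence a top-dimensional stratum is nonempty precisely when $m\ne 1$, as claimed.

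The counting statements then follow. The configuration with all components $(0,1)$ has $m=0$ and is nonempty for every $d\gge 8$, giving at least one top-dimensional stratum; if $d>8$ then $d-7\gge 2$ and the configuration with two $(\omega_2,0)$ components and $d-9$ copies of $(0,1)$ has $m=2\ne 1$ and is a distinct nonempty stratum, so there is more than one (indeed $m$ ranges over $\{0,\dots,d-7\}$ with only $m=1$ excluded, yielding $d-7$ of them). The one genuinely substantive step is the nonemptiness criterion: the dimension bound alone does not guarantee that a maximal configuration is occupied, and ruling out the degenerate alternative rests entirely on the multiplicity computation and its exceptional vanishing at $m=1$.
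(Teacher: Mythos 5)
Your proposal is correct and follows essentially the same route as the paper, whose one-line proof invokes exactly your two ingredients: the size formula \eqref{La size} (forcing $n\lle d-7$ with equality precisely for components of weight-size one, i.e.\ $(\omega_2,0)$ or $(0,1)$) and the fact that $V_0$ and $V_{\omega_2}$ are both submodules of $V_{\omega_2}\otimes V_{\omega_2}$, which is precisely what powers your two-step induction on $m$ (the $V_{\omega_2}$ occurrence handling the odd base case $m=3$, the $V_0$ occurrence the even case and the inductive step). You have merely made explicit the details the paper compresses, including the exceptional vanishing $(V_{\omega_2})^{\g}=0$ at $m=1$.
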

	\begin{proof}
		The lemma follows from formula \eqref{La size} taking into account the fact that $V_0$ and $V_{\omega_2}$ are both submodules of  $V_{\omega_2}\otimes V_{\omega_2}$, see \eqref{eq:dec of B}. 
	\end{proof}
	
	\subsection{The $\g$-stratification of the self-self-dual Grassmannian}
	The following theorem follows directly from Theorem 3.5 of \cite{LMV2} and Theorem \ref{bi rep ssgr}. 
	\begin{thm}\label{thm sgr dec}
		We have
		$$
		\mb S\Gr_d=\bigsqcup_{d\text{-nontrivial }(\bLa,\bm k)}\mb S\Omega_{\bLa,\bm k}.
		$$
		\qed
	\end{thm}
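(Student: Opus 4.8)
The plan is to obtain the decomposition by intersecting $\mb S\Gr_d$ with the general Schubert stratification of the ambient Grassmannian, and then to discard the empty strata and relabel the survivors using results already in hand. First I would invoke Theorem 3.5 of \cite{LMV2}, which furnishes the disjoint union decomposition $\Gr_d=\bigsqcup_{\bm\Xi}\Omega_{\bm\Xi}$, where $\bm\Xi$ runs over unordered sequences of nonzero partitions (with at most $7$ parts) satisfying $|\bm\Xi|=7(d-7)$, and each $X\in\Gr_d$ lies in exactly one stratum, determined by its ramification data. Since $\mb S\Gr_d\subset\Gr_d$, intersecting this decomposition with $\mb S\Gr_d$ yields
\[
\mb S\Gr_d=\bigsqcup_{\bm\Xi}\big(\Omega_{\bm\Xi}\cap\mb S\Gr_d\big)=\bigsqcup_{\bm\Xi}\mb S\Omega_{\bm\Xi},
\]
and the disjointness is inherited verbatim from the ambient decomposition.

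Next I would cull the empty strata and reindex the rest. By Lemma \ref{lem sym weight new}, whenever $\mb S\Omega_{\bm\Xi}$ is nonempty the labeling sequence must have the form $\bm\Xi=\bLa_{A,\bm k}$ for a \emph{unique} sequence of dominant integral $\g$-weights $\bLa$ and a unique tuple $\bm k$ of nonnegative integers; since by definition $\mb S\Omega_{\bLa,\bm k}=\mb S\Omega_{\bLa_{A,\bm k}}$, each nonempty stratum appearing above is precisely one of the $\mb S\Omega_{\bLa,\bm k}$. It then remains to say which pairs $(\bLa,\bm k)$ actually occur, and this is exactly where Theorem \ref{bi rep ssgr} enters: the bijection between $\mb S\Omega_{\bLa,\bm z}$ and the joint $\mc B$-eigenvalues on $(V_{\bLa,\bm z})^{\g}$ shows that $\mb S\Omega_{\bLa,\bm k}$ is nonempty exactly when $(V_\bLa)^{\g}\ne 0$ holds together with the size and nonvanishing conditions, that is, precisely when $(\bLa,\bm k)$ (after deleting the trivial components, to match the nonzero-partition convention on the $\bm\Xi$ side) is $d$-nontrivial. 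Reindexing the surviving terms by $d$-nontrivial pairs produces the stated formula.

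Because the decomposition is assembled entirely from prior results, I expect no substantive obstacle: the genuine point to verify is merely that the relabeling $\bm\Xi\leftrightarrow(\bLa,\bm k)$ is a bijection on the set of nonempty strata, with no stratum counted twice and none omitted. The "no double counting" half is exactly the uniqueness asserted in Lemma \ref{lem sym weight new}, while the "no omission" half is the equivalence (nonempty $\Leftrightarrow$ $d$-nontrivial) supplied by Theorem \ref{bi rep ssgr} and the remarks following it. The only care needed is the bookkeeping alignment between the nonzero-component convention on the $\bm\Xi$ side and the deletion of trivial components $(\la^{(s)},k_s)=(0,0)$ on the $(\bLa,\bm k)$ side, after which the argument is routine.
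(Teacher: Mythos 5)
Your proof is correct and follows the same route as the paper, which deduces the decomposition directly from Theorem 3.5 of \cite{LMV2} together with Theorem \ref{bi rep ssgr} (with Lemma \ref{lem sym weight new} and the $d$-nontriviality discussion in Section \ref{sec prop strata} handling exactly the relabeling and nonemptiness bookkeeping you describe). Your write-up simply spells out the details the paper leaves implicit.
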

	
	For a $d$-nontrivial $(\bLa,\bm k)$, we call the closure of $\mb S\Omega_{\bLa,\bm k}$ inside $\mb S\Gr_d$, a {\it $\g$-cycle}.
	The $\g$-cycles $\overline{\mb S\Omega}_{\bLa,\bm k}$ are algebraic sets in $\mb S\Gr_d$ and therefore in $\Gr_d$. We 
	describe  $\g$-cycles as unions of $\g$-strata.
	
	Define a partial order $\gge$ on the set of pairs $\{(\bLa,\bm k)\}$ as follows.
	Let $\bLa=(\la^{(1)},\dots,\la^{(n)})$, $\bs\Xi=(\xi^{(1)},\dots,\xi^{(m)})$ be two sequences of dominant integral $\g$-weights. Let $\bm k=(k_1,\dots,k_n)$, $\bm r=(r_1,\dots,r_m)$ be two tuples of nonnegative integers. We say that $(\bLa,\bm k)\gge (\bm\Xi,\bm r)$ if and only if there exists a partition $\{I_1,\dots,I_m\}$ of $\{1,2,\dots,n\}$ such that
	\[
	{ \mathrm{Hom}}_{\g}(V_{\xi^{(i)}},\bigotimes_{j\in I_i} V_{\la^{(j)}})\ne 0,\qquad |\xi^{(i)}_{A,r_i}|=\sum_{j\in I_i}|\la_{A,k_j}^{(j)}|,
	\]for $i=1,\dots,m$.
	
	If $(\bLa,\bm k)\gge (\bm\Xi,\bm r)$ are $d$-nontrivial, we call $\mb S\Omega_{\bm\Xi,\bm r}$ a \emph{degeneration} of $\mb S\Omega_{\bLa,\bm k}$. In the case of $m=n-1$, we call $\mb S\Omega_{\bm\Xi,\bm r}$ a \emph{simple degeneration} of $\mb S\Omega_{\bLa,\bm k}$. 
	
	\begin{thm}\label{thm simple deg G} 
		If $\mb S\Omega_{\bm\Xi,\bm r}$ is a degeneration of $\mb S\Omega_{\bLa,\bm k}$, then $\mb S\Omega_{\bm\Xi,\bm r}$ is contained in the $\g$-cycle $\overline{\mb S\Omega}_{\bLa,\bm k}$.
	\end{thm}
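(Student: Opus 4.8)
The plan is to use the bijection of Theorem~\ref{bi rep ssgr} to convert the geometric assertion into a statement about limits of joint eigenvalues of the Bethe algebra $\mc B$ as evaluation points collide, and then to realize every point of $\mb S\Omega_{\bm\Xi,\bm r}$ as a genuine limit of points of $\mb S\Omega_{\bLa,\bm k}$. Since inclusion of closures is transitive, I would first reduce to a \emph{simple} degeneration. Given $(\bLa,\bm k)\gge(\bm\Xi,\bm r)$ with its partition $\{I_1,\dots,I_m\}$, each block $I_i$ is merged into $\xi^{(i)}$; because $\Hom_\g(V_{\xi^{(i)}},\bigotimes_{j\in I_i}V_{\la^{(j)}})\ne 0$, one may factor this merge into a chain of binary merges through intermediate dominant weights $\eta$ with $V_\eta\subseteq V_\mu\otimes V_\nu$, the nonnegative integers adding up so that the size condition $|\xi^{(i)}_{A,r_i}|=\sum_{j\in I_i}|\la^{(j)}_{A,k_j}|$ holds at each step. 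Chaining the resulting closure inclusions reduces the theorem to the case $m=n-1$, i.e.\ to merging two factors $\la^{(1)},\la^{(2)}$ into a single $\xi$ with $V_\xi\subseteq V_{\la^{(1)}}\otimes V_{\la^{(2)}}$, the remaining factors unchanged.

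For this simple degeneration, fix $X\in\mb S\Omega_{\bm\Xi,\bm r}$ with ramification points $\bm w=(w_1,w_3,\dots,w_n)$, where $w_1$ carries $(\xi,r_1)$. I would split $w_1$: pick a real-analytic family $\bm z^{(\epsilon)}=(z_1(\epsilon),z_2(\epsilon),w_3,\dots,w_n)\in\R\mathring{\bP}_n$ with $z_1(\epsilon),z_2(\epsilon)\to w_1$, assigning $\la^{(1)},\la^{(2)}$ to $z_1(\epsilon),z_2(\epsilon)$. By Lemma~\ref{lem BC completeness}, $\mc B$ acts with simple spectrum on $(V_{\bLa,\bm z^{(\epsilon)}})^\g$, so by Theorem~\ref{bi rep ssgr} the fiber of the covering of Proposition~\ref{prop bij BC} over $\bm z^{(\epsilon)}$ consists of $\dim(V_\bLa)^\g$ points of $\mb S\Omega_{\bLa,\bm k}$. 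The essential input is the behavior of this fiber as $\epsilon\to 0$: the action of the generating series $B_i(x)$ on $V_{\la^{(1)}}(z_1(\epsilon))\otimes V_{\la^{(2)}}(z_2(\epsilon))$ has a well-defined limit as a rational function of $x$, and the limiting $\mc B$-spectrum on the invariants splits along the isotypic decomposition $V_{\la^{(1)}}\otimes V_{\la^{(2)}}=\bigoplus_\xi V_\xi^{\oplus m_\xi}$; this is the fusion mechanism of \cite{MV1,LMV1}. As $m_\xi=\dim\Hom_\g(V_\xi,V_{\la^{(1)}}\otimes V_{\la^{(2)}})\ge1$ by hypothesis, the summand $(V_\xi(w_1)\otimes\cdots)^\g$ contributes, and the joint eigenvalue attached to $X$ appears among the limits of the eigenvalues on $(V_{\bLa,\bm z^{(\epsilon)}})^\g$. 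Choosing the eigenvalue branch $v(\epsilon)$ converging to that of $X$ gives a family $X(\epsilon)\in\mb S\Omega_{\bLa,\bm k}$.

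It remains to show $X(\epsilon)\to X$ in $\Gr_d$. Here I would not argue by continuity of kernels of the associated differential operators, which genuinely fails when two singular points merge: the exponents at $w_1$ recombine and the kernel may acquire or shed a base point at $w_1$. Instead I would use that $\mb S\Gr_d$ is closed in $\Gr_d$ together with finiteness of the Wronski map. The Wronskians $\Wr(X(\epsilon))=(x-z_1(\epsilon))^{|\la^{(1)}_{A,k_1}|}(x-z_2(\epsilon))^{|\la^{(2)}_{A,k_2}|}\prod_{s\ge3}(x-w_s)^{|\la^{(s)}_{A,k_s}|}$ converge, by the size condition, to $\Wr(X)=(x-w_1)^{|\xi_{A,r_1}|}\prod_{s\ge3}(x-w_s)^{|\la^{(s)}_{A,k_s}|}$, so by compactness every limit point of $X(\epsilon)$ is a self-self-dual space with ramification $\bm w$, hence a point of some $\mb S\Omega_{\bm\Xi',\bm r'}$ obtained by an $A$-degeneration at $w_1$. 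Because distinct such points carry distinct joint eigenvalues of the limiting Bethe action (Theorem~\ref{bi rep ssgr}), and the eigenvalues of $X(\epsilon)$ converge to those of $X$ by construction, the unique limit is $X$ itself.

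Since $\bm w$ and $X\in\mb S\Omega_{\bm\Xi,\bm r}$ were arbitrary, this gives $\mb S\Omega_{\bm\Xi,\bm r}\subseteq\overline{\mb S\Omega}_{\bLa,\bm k}$. As a consistency check and an independent route to surjectivity onto the whole stratum, the branching identity $\dim(V_\bLa)^\g=\sum_{\bm\Xi'}\big(\prod_i m_{\xi^{(i)}}\big)\dim(V_{\bm\Xi'})^\g$ matches the generic covering degree with the total contribution of the boundary strata, confirming that no point of $\mb S\Omega_{\bm\Xi,\bm r}$ is omitted. The hard part will be precisely the collision step: establishing that the Bethe spectrum has a well-defined limit distributing over the isotypic summands and that the self-self-dual space degenerates to the one prescribed by the $\Hom_\g$-condition rather than to some other $A$-degeneration. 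I would handle this as in \cite{MV1,LMV1} for the fusion of the spectrum and as in the type B, C analysis of \cite{LMV2} for compatibility with self-self-duality.
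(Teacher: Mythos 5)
Your proposal is correct and follows essentially the same route as the paper: the paper's proof is the one-line reduction ``assuming Theorem \ref{bi rep ssgr}, argue as in Theorem 3.6 of \cite{LMV2}'', and that argument is exactly what you spell out --- reduction to simple degenerations, collision of two evaluation points, fusion of the Bethe spectrum over the isotypic decomposition $V_{\la^{(1)}}\otimes V_{\la^{(2)}}=\bigoplus_\xi V_\xi^{\oplus m_\xi}$, and identification of the limit space via closedness of $\mb S\Gr_d$, convergence of the Wronskians (where the size condition $|\xi_{A,r_1}|=|\la^{(1)}_{A,k_1}|+|\la^{(2)}_{A,k_2}|$ is precisely what makes the limiting exponents match), and injectivity of the space-to-eigenvalue correspondence from Theorem \ref{bi rep ssgr}. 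The technical caveats you flag (failure of continuity of kernels under collision, choice of a converging eigenvalue branch) are treated at the same level of detail as in the cited arguments of \cite{LMV2}, \cite{MV1}, \cite{LMV1}, so there is no gap relative to the paper's own proof.
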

	\begin{proof}
	Assuming Theorem \ref{bi rep ssgr}, Theorem \ref{thm simple deg G} is proved in a similar way as Theorem 3.6 of \cite{LMV2}.	\end{proof}
	
	\begin{thm}\label{thm G strata}
		For $d$-nontrivial $(\bLa,\bm k)$, we have
		$$
		\overline{\mb S\Omega}_{\bLa,\bm k}=\bigsqcup_{\substack{(\bm\Xi,\bm r)\lle (\bLa,\bm k),\\ d\text{-nontrivial }(\bm\Xi,\bm r)}}\mb S\Omega_{\bm\Xi,\bm r}.
		$$
	\end{thm}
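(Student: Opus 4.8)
The plan is to prove the two inclusions separately, exploiting that the strata form a finite partition of $\mb S\Gr_d$. By Theorem \ref{thm sgr dec} there are finitely many $d$-nontrivial pairs and their strata partition $\mb S\Gr_d$; since $\overline{\mb S\Omega}_{\bLa,\bm k}$ is closed, it is a union of entire strata, say $\overline{\mb S\Omega}_{\bLa,\bm k}=\bigsqcup_{(\bm\Xi,\bm r)\in S}\mb S\Omega_{\bm\Xi,\bm r}$ for some set $S$ of $d$-nontrivial pairs with $(\bLa,\bm k)\in S$. The inclusion $\supseteq$ is exactly Theorem \ref{thm simple deg G}, which places every degeneration into $S$. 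Thus the whole content is the reverse inclusion: every pair in $S$ is a degeneration of $(\bLa,\bm k)$.

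First I would record two combinatorial facts about the order $\gge$, to be checked directly: (i) $\gge$ is the transitive closure of the simple degeneration relation, so every $d$-nontrivial $(\bm\Xi,\bm r)\lle(\bLa,\bm k)$ arises from $(\bLa,\bm k)$ by a chain of simple degenerations; and (ii) $\gge$ is transitive. Fact (i) holds because a partition of $\{1,\dots,n\}$ into $m$ blocks refines to the discrete partition one merge at a time, and the condition $\Hom_{\g}(V_{\xi^{(i)}},\bigotimes_{j\in I_i}V_{\la^{(j)}})\neq 0$ propagates through intermediate dominant weights by associativity of the tensor product, the degree equalities adding up accordingly.

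I would then prove $\subseteq$ by induction on $n$, using $\dim\mb S\Omega_{\bm\Xi,\bm r}=m$ from Proposition \ref{prop bij BC}; the base case $n=1$ is immediate as no points can collide. The open stratum has dimension $n$, so its frontier $\overline{\mb S\Omega}_{\bLa,\bm k}\setminus\mb S\Omega_{\bLa,\bm k}$ is a union of strata of dimension $\le n-1$. For a codimension-one frontier stratum $\mb S\Omega_{\bm\Xi,\bm r}$ (so $m=n-1$) take a generic $X\in\mb S\Omega_{\bm\Xi,\bm r}$ over distinct $\bm w\in\mathring{\bP}_{n-1}$ and a family $X^{(t)}\in\mb S\Omega_{\bLa,\bm k}$ over $\bm z^{(t)}$ with $X^{(t)}\to X$; the marked points are the ramification points, and since $\Gr_d$ is compact one may assume $\bm z^{(t)}\to\bm z^{(0)}$. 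As the dimension drops by one, exactly two indices $a,b$ collide at some $w_i$ and the other blocks are singletons. Conservation of the local Wronskian degree at $w_i$ forces $|\xi^{(i)}_{A,r_i}|=|\la^{(a)}_{A,k_a}|+|\la^{(b)}_{A,k_b}|$. Granting the fusion constraint below, this exhibits $\mb S\Omega_{\bm\Xi,\bm r}$ as a simple degeneration. Once every codimension-one frontier stratum is a simple degeneration, the induction closes: $\overline{\mb S\Omega}_{\bLa,\bm k}=\mb S\Omega_{\bLa,\bm k}\cup\bigcup\overline{\mb S\Omega}_{\bm\Xi,\bm r}$ over these simple degenerations, each of which equals the union of its own degenerations by the inductive hypothesis, and facts (i) and (ii) assemble these into exactly $\bigsqcup_{(\bm\Theta,\bm s)\lle(\bLa,\bm k)}\mb S\Omega_{\bm\Theta,\bm s}$.

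The main obstacle is the fusion constraint: proving that the limiting self-self-dual space lies in a stratum whose collision-point weight $\xi^{(i)}$ is a genuine $\g$-constituent, $\Hom_{\g}(V_{\xi^{(i)}},V_{\la^{(a)}}\otimes V_{\la^{(b)}})\neq 0$, rather than merely a $\gl_7$-constituent of $V_{\la^{(a)}_A}\otimes V_{\la^{(b)}_A}$. This is precisely where the $\g$-structure, not the ambient $\gl_7$-Grassmannian, must be used: via Theorem \ref{bi rep ssgr} the family $X^{(t)}$ corresponds to $\mc B$-eigenvalues on $(V_{\la^{(a)}}(z_a^{(t)})\otimes V_{\la^{(b)}}(z_b^{(t)})\otimes\cdots)^{\g}$, and as $z_a^{(t)},z_b^{(t)}\to w_i$ the fusion of the two evaluation modules produces a limiting $\mc B$-eigenvalue supported on a $\g$-summand $V_{\xi^{(i)}}$ of $V_{\la^{(a)}}\otimes V_{\la^{(b)}}$. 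Making this limit rigorous, controlling the degeneration of the eigenvector together with its eigenvalue and identifying the surviving $\g$-isotypic component, is the crux; I expect it to parallel Theorem 3.7 of \cite{LMV2} while requiring the $\g$-specific input of Theorem \ref{bi rep ssgr} in place of the type-A Wronskian map.
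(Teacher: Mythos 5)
Your inclusion $\supseteq$ is indeed exactly Theorem \ref{thm simple deg G}, as in the paper. The problem is the inclusion $\subseteq$: what you call ``the crux'' --- that as $z_a^{(t)},z_b^{(t)}\to w_i$ the limiting $\mc B$-eigenvalue is supported on a genuine $\g$-summand $V_{\xi^{(i)}}\subset V_{\la^{(a)}}\otimes V_{\la^{(b)}}$, not merely a $\gl_7$-constituent --- is not a detail to be deferred; it is the entire mathematical content of the hard direction, and you offer only the expectation that it parallels the type-A argument. The paper's own (one-line) deduction of the theorem from Theorem \ref{thm simple deg G} hides precisely this limiting argument, which is imported from the proof of Theorem 3.6 of \cite{LMV2} via Theorem \ref{bi rep ssgr}: one takes an arbitrary sequence $X^{(t)}\in \mb S\Omega_{\bLa,\bm k,\bm z^{(t)}}$ with $X^{(t)}\to X$, passes to a subsequence with $\bm z^{(t)}$ convergent in the compact $(\bP^1)^n$, reads off a partition $\{I_1,\dots,I_m\}$ with blocks of \emph{arbitrary} size from the collisions, gets the degree equalities from continuity of the Wronskian, and gets the $\Hom$ condition from convergence of the Bethe eigenvalues to an eigenvalue on the fused module $\bigotimes_i\bigl(\bigotimes_{j\in I_i}V_{\la^{(j)}}\bigr)(w_i)$. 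Since you have not carried out this step, your proposal reduces the theorem to the statement that constitutes its difficulty.

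Moreover, the scaffolding you build around the crux has two independent flaws. First, circularity: you begin by asserting that $\overline{\mb S\Omega}_{\bLa,\bm k}$, being closed, ``is a union of entire strata.'' For an arbitrary constructible partition this is false --- a closed set need not be saturated for the partition --- and the frontier property is exactly part of what Theorem \ref{thm G strata} asserts, so it cannot be an input. Second, purity: even granting that the frontier is a union of strata, your induction treats only codimension-one frontier strata and then closes by writing $\overline{\mb S\Omega}_{\bLa,\bm k}$ as the stratum together with the closures of its simple degenerations. The frontier of a constructible set need not be pure of codimension one, and limits in which three or more of the $z_s$ collide simultaneously land directly in strata with $m\lle n-2$, which your argument never reaches except through the identity being proved. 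The direct limiting argument sketched above handles blocks of arbitrary size in one pass and makes the induction, the codimension-one analysis, and your facts (i)--(ii) unnecessary (those facts are essentially correct, though in (i) you must also verify that the intermediate pairs are $d$-nontrivial, i.e., that the intermediate weights $\eta$ can be chosen with $(V_{\bm\Xi'})^{\g}\ne 0$, by choosing them along a chain realizing a fixed invariant vector).
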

	\begin{proof}
		Theorem \ref{thm G strata} follows from Theorem \ref{thm simple deg G}.
	\end{proof}
	\medskip
	
	Theorems \ref{thm sgr dec} and \ref{thm G strata} imply that the subsets $\mb S\Omega_{\bLa,\bm k}$ with $d$-nontrivial $(\bLa,\bm k)$ give a stratification of $\mb S\Gr_d$, similar to the $\gl_{N}$-stratification of $\Gr(N,d)$ and $\g_N$-stratification of $\sGr(N,d)$ described in \cite{LMV2}. We call it the {\it $\g$-stratification of $\mb S\Gr_d$}. 
	
	An example of $\g$-stratification of $\mb S\Gr_{11}$ is given in Appendix \ref{app ex}.
	
	\begin{rem}\label{translations}
		The group of affine translations acts on $\C_d[x]$ by changes of variable. Namely, for  $a\in\C^\times,b\in\C$, we have a map sending
		$f(x)\mapsto f(ax+b)$ for all $f(x)\in\C_d[x]$. This group action preserves the self-self-dual Grassmannian $\mb S\Gr_d$ and the $\g$-strata $\mb S\Omega_{\bLa}$.
	\end{rem}
	
	\subsection{The $\g$-stratification of $\mb S\Gr_d$ and the Wronski map}
	Let $\bLa=(\la^{(1)},\dots,\la^{(n)})$ be a sequence of dominant integral $\g$-weights and let $\bm k=(k_1,\dots,k_n)$ be an $n$-tuple of nonnegative integers. Let $\bs z=(z_1,\dots,z_n)\in \mathring{\bP}_n$.
	
	Recall that $\la^{(s)}_i=\langle \la^{(s)},\calpha_i\rangle$, $i=1,2$. If $X\in\mb S\Omega_{\bLa,\bm k,\bm z}$, one has
	\[
	\Wr(X)=\prod_{s=1}^n(x-z_s)^{7(2\la^{(s)}_1+\la^{(s)}_{2}+k_s)}.
	\]
	
	Define the \emph{reduced Wronski map} $\overline{\Wr}$ as follows. The map
	\[
	\overline{\Wr}:\mb S\Gr_d\to \Gr(1,d-6)
	\]is sending $X\in \mb S\Gr_d$ to $\C(\Wr(X))^{1/7}$. The reduced Wronski map is a finite map. 
	
	Let $\bs m=(m^{(1)},\dots,m^{(n)})$ be an unordered sequence of positive integers such that $|\bs m|=d-7$. Then we have the following proposition.
	
	\begin{prop}\label{B preimage}
		The preimage of $\Omega_{\bs m}$ in $\Gr(1,d-6)$ under the reduced Wronski map is a union of all strata $\mb S\Omega_{\bLa,\bm k}$ in $\mb S\Gr_d$ such that $|\la^{(s)}_{A,k_s}|=7m^{(s)}$, $s=1,\dots,n$. \qed
	\end{prop}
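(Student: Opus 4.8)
The plan is to reduce the claim to the explicit formula for the Wronskian of a self-self-dual space, displayed just before the definition of $\overline{\Wr}$, combined with the disjoint stratification of Theorem \ref{thm sgr dec}. The starting point is the elementary identity $|\la^{(s)}_{A,k_s}|=7(k_s+2\la^{(s)}_1+\la^{(s)}_2)$, which follows at once from the defining differences of the partition $\mu_{A,k}$ and is consistent with \eqref{La size}. Consequently the condition $|\la^{(s)}_{A,k_s}|=7m^{(s)}$ is equivalent to $m^{(s)}=k_s+2\la^{(s)}_1+\la^{(s)}_2$, and for $X\in\mb S\Omega_{\bLa,\bm k,\bm z}$ the Wronskian formula reads $\Wr(X)=\prod_{s=1}^n(x-z_s)^{7m^{(s)}}$, so that $\overline{\Wr}(X)=\C\prod_{s=1}^n(x-z_s)^{m^{(s)}}$.

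First I would establish that the stated union is contained in the preimage. Let $X\in\mb S\Omega_{\bLa,\bm k}$ with $|\la^{(s)}_{A,k_s}|=7m^{(s)}$ for all $s$, and choose $\bm z\in\mathring{\bP}_n$ with $X\in\mb S\Omega_{\bLa,\bm k,\bm z}$. By the computation above, $\overline{\Wr}(X)$ is the line spanned by $\prod_{s=1}^n(x-z_s)^{m^{(s)}}$. Since the $z_s$ are distinct and every $m^{(s)}>0$ (by $d$-nontriviality), this polynomial has root-multiplicity type exactly $\bs m$, whence $\overline{\Wr}(X)\in\Omega_{\bs m,\bm z}\subset\Omega_{\bs m}$.

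Next I would prove the reverse inclusion. Let $X\in\mb S\Gr_d$ with $\overline{\Wr}(X)\in\Omega_{\bs m}$. By Theorem \ref{thm sgr dec}, $X$ lies in a unique stratum $\mb S\Omega_{\bLa,\bm k}$ with $(\bLa,\bm k)$ $d$-nontrivial; applying the first step to this stratum gives $\overline{\Wr}(X)\in\Omega_{\bs m'}$, where the $s$-th component of $\bs m'$ equals $|\la^{(s)}_{A,k_s}|/7$. A point of $\Gr(1,d-6)$ is a single line $\C p$, and its unordered collection of root multiplicities (counting a root at infinity of order $(d-7)-\deg p$) is an invariant of the line; hence the strata $\Omega_{\bs m'}$ of $\Gr(1,d-6)$ with $|\bs m'|=d-7$ are pairwise disjoint. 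As $\overline{\Wr}(X)$ lies in both $\Omega_{\bs m}$ and $\Omega_{\bs m'}$, we get $\bs m'=\bs m$ as unordered sequences, i.e. $|\la^{(s)}_{A,k_s}|=7m^{(s)}$ after reordering, so $X$ belongs to one of the asserted strata.

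The argument is essentially a bookkeeping of vanishing orders, and I do not expect a genuine difficulty. The only steps demanding care are the exactness of the vanishing order of $\Wr(X)$ at each $z_s$ (which holds because $X$ lies in the Schubert cells $\Omega_{\la^{(s)}_{A,k_s}}(\mc F(z_s))$ rather than in their closures, and is exactly what the displayed Wronskian formula records), and the correct treatment of a possible root at infinity when some $z_s=\infty$, so that the unordered multiset $\bs m'$ really coincides with the root-multiplicity type read off from $\overline{\Wr}(X)$.
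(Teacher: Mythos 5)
Your proof is correct and is precisely the bookkeeping the paper intends: Proposition \ref{B preimage} is stated with an immediate \qed because it follows directly from the displayed formula $\Wr(X)=\prod_{s=1}^n(x-z_s)^{7(2\la^{(s)}_1+\la^{(s)}_2+k_s)}$ together with the disjoint stratification of Theorem \ref{thm sgr dec} and the fact that the root-multiplicity multiset (with a root at infinity counted via degree drop) separates the strata $\Omega_{\bs m}$ of $\Gr(1,d-6)$. Your explicit handling of the root at infinity and of the unordered matching of $\bs m$ supplies exactly the details the paper leaves implicit.
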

	
	Let  $\bLa=(\la^{(1)},\dots,\la^{(n)})$ be a sequence of dominant integral $\g$-weights and $\bm k=(k_1,\dots,k_n)$ a sequence of nonnegative integers.
	Let $a$ be the number of distinct pairs in the set $\{(\la^{(s)},k_s),\ s=1,\dots,n\}$. We can assume that $(\la^{(1)},k_1),\dots,(\la^{(a)},k_a)$ are all distinct, and let $n_1,\dots,n_a$ be their multiplicities, $n_1+\dots+n_a=n$. Define the {\it symmetry coefficient} of $(\bLa,\bm k)$ as the product of multinomial coefficients:
	\[
	b(\bLa,\bm k) = \prod_m \frac{\left(\sum_{s=1,\dots,a, \ |\la^{(s)}_{A,k_s}|=m} n_s\right)! }  {\prod_{s=1,\dots,a,\ |\la^{(s)}_{A,k_s}|=m} (n_s)!}.
	\]
	
	Consider a sequence of integers $\bs m=(m^{(1)},\dots,m^{(n)})$,
	where $7m^{(s)}=|\la^{(s)}_{A,k_s}|$. Consider the stratum $\Omega_{\bs m}$ in $\Gr(1,d-6)$, corresponding to polynomials with $n$ distinct roots of multiplicities $m^{(1)},\dots,m^{(n)}$. 
	
	\begin{prop}
		\label{prop cov}
		Let $(\bLa,\bs k)$ be $d$-nontrivial. Then the reduced Wronski map 
		$\overline{\Wr}|_{\mb S\Omega_{\bLa,\bs k}} : \mb S\Omega_{\bLa,\bs k}\to \Omega_{\bs m}$ is a ramified covering
		of degree $b(\bLa,\bm k) \dim (V_{\bLa})^{\g}$.  
	\end{prop}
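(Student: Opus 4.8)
The plan is to realize $\overline{\Wr}|_{\mb S\Omega_{\bLa,\bm k}}$ as the composition of the ramified covering of Proposition \ref{prop bij BC} with an unramified covering of configuration spaces, and then to multiply degrees. First I would identify the target stratum. A point of $\Omega_{\bs m}\subset\Gr(1,d-6)$ is a line $\C g$ where $g$ has $n$ distinct roots of multiplicities $m^{(1)},\dots,m^{(n)}$; sending an ordered configuration $\bm z=(z_1,\dots,z_n)$ to $\C\prod_{s=1}^n(x-z_s)^{m^{(s)}}$ identifies $\Omega_{\bs m}$ with $\mathring{\bP}_n/\mathfrak S_{\bs m}$, where $\mathfrak S_{\bs m}\subseteq\mathfrak S_n$ permutes the indices $s$ having equal multiplicity $m^{(s)}$. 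Here $x-z_s$ is read as $1$ when $z_s=\infty$, in agreement with the formula $\Wr(X)=\prod_{s=1}^n(x-z_s)^{7m^{(s)}}$ for $X\in\mb S\Omega_{\bLa,\bm k,\bm z}$.

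Next I would exhibit the factorization. By Proposition \ref{prop bij BC}, the map $p\colon\mb S\Omega_{\bLa,\bm k}\to\mathring{\bP}_n/\mathfrak S_{\bm n}$ recording the labelled ramification configuration is a ramified covering of degree $\dim(V_{\bLa})^{\g}$, where $\mathfrak S_{\bm n}$ permutes indices with equal pairs $(\la^{(s)},k_s)$, equivalently equal partitions $\la^{(s)}_{A,k_s}$. Since $m^{(s)}=\tfrac{1}{7}|\la^{(s)}_{A,k_s}|=k_s+2\la_1^{(s)}+\la_2^{(s)}$ by \eqref{La size}, equal pairs force equal multiplicities, whence $\mathfrak S_{\bm n}\subseteq\mathfrak S_{\bs m}$ and there is a natural projection $q\colon\mathring{\bP}_n/\mathfrak S_{\bm n}\to\mathring{\bP}_n/\mathfrak S_{\bs m}\cong\Omega_{\bs m}$. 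Using the Wronskian formula one checks that the point of $\mb S\Omega_{\bLa,\bm k}$ over the class of $\bm z$ is sent by $\overline{\Wr}$ to $\C\prod_s(x-z_s)^{m^{(s)}}$, i.e. to the image of $\bm z$ in $\mathring{\bP}_n/\mathfrak S_{\bs m}$; thus $\overline{\Wr}|_{\mb S\Omega_{\bLa,\bm k}}=q\circ p$. That $\overline{\Wr}$ indeed maps $\mb S\Omega_{\bLa,\bm k}$ into $\Omega_{\bs m}$ is Proposition \ref{B preimage}.

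The degree then multiplies. Every subgroup of $\mathfrak S_n$ acts freely on $\mathring{\bP}_n$, so $q$ is an unramified covering of degree $[\mathfrak S_{\bs m}:\mathfrak S_{\bm n}]$. Grouping indices by the common value of $|\la^{(s)}_{A,k_s}|$, in each such block $\mathfrak S_{\bs m}$ is the full symmetric group while $\mathfrak S_{\bm n}$ is the product of the symmetric groups on the sub-blocks of equal pairs $(\la^{(s)},k_s)$, so the index is the product of multinomial coefficients
\[
[\mathfrak S_{\bs m}:\mathfrak S_{\bm n}]=\prod_m\frac{\big(\sum_{s,\,|\la^{(s)}_{A,k_s}|=m}n_s\big)!}{\prod_{s,\,|\la^{(s)}_{A,k_s}|=m}(n_s)!}=b(\bLa,\bm k).
\]
Composing the ramified covering $p$ of degree $\dim(V_{\bLa})^{\g}$ with the unramified covering $q$ of degree $b(\bLa,\bm k)$ shows that $\overline{\Wr}|_{\mb S\Omega_{\bLa,\bm k}}=q\circ p$ is a ramified covering of degree $b(\bLa,\bm k)\dim(V_{\bLa})^{\g}$.

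The routine geometry (the freeness of the two actions, and the fact that $\Omega_{\bs m}\cong\mathring{\bP}_n/\mathfrak S_{\bs m}$ is an isomorphism of varieties rather than a mere set bijection) causes no real trouble. I expect the main obstacle to be bookkeeping: verifying the commutativity $\overline{\Wr}|_{\mb S\Omega_{\bLa,\bm k}}=q\circ p$, which is what makes the reduced Wronski map equal to forgetting the weight and integer labels on the ramification points, and matching the combinatorial index $[\mathfrak S_{\bs m}:\mathfrak S_{\bm n}]$ with the symmetry coefficient $b(\bLa,\bm k)$ exactly as defined through the re-indexing by distinct pairs. Both rest on the identity $m^{(s)}=\tfrac{1}{7}|\la^{(s)}_{A,k_s}|$ and on $p$ recording precisely the labelled ramification configuration.
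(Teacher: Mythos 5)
Your proposal is correct and follows essentially the same route as the paper: the paper's proof is a one-line citation of Theorem \ref{bi rep ssgr}, Lemma \ref{lem BC completeness}, and Proposition \ref{B preimage}, whose combined content is exactly your factorization $\overline{\Wr}|_{\mb S\Omega_{\bLa,\bm k}}=q\circ p$ through the covering of Proposition \ref{prop bij BC} (itself proved from the first two citations), with the symmetry coefficient $b(\bLa,\bm k)$ arising as the index $[\mathfrak S_{\bs m}:\mathfrak S_{\bm n}]$ counting assignments of roots to labelled singular points. Your writeup simply makes explicit the bookkeeping the paper leaves to the reader.
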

	
	\begin{proof}
		The statement follows from Theorem \ref{bi rep ssgr}, Lemma \ref{lem BC completeness}, and Proposition \ref{B preimage}.
	\end{proof}
	
	In other words, the $\g$-stratification of $\mb S\Gr_d$ given by Theorems \ref{thm sgr dec} and \ref{thm G strata}, is adjacent to the swallowtail $\gl_1$-stratification of $\Gr(1,d-6)$ and the reduced Wronski map. 
	
	\section{Proof of Theorem \ref{bi rep ssgr}}\label{sec oper}
	In this section, we recall notation for $^t\g$-opers, and then discuss the relation between $^t\g$-opers with prescribed regular singularities and self-self-dual spaces. Then we prove Theorem \ref{bi rep ssgr} by using the bijection between joint eigenvalues of the Bethe algebra $\mc B$ and $^t\g$-opers described in \cite{R}. 
	\subsection{Lie algebra $^t\g$}
	Let $\g$ and $\h$ be as in Section \ref{sec sla}. Let $^t\g=\g({}^tA)$ be the Langlands dual Lie algebra of $\g$, then in our case $^t\g\cong\g$. 
A system of simple roots of $^t\g$ is $\calpha_1,\calpha_2$ with the corresponding coroots $\alpha_1,\alpha_2$. A weight $\la\in\h^*$ of $\g$ can be identified with a coweight of $^t\g$. Note that, under the identification of $^t\g\cong\g$, $\calpha_1$ becomes the short root of $^t\g$ while $\calpha_2$ the long one.
	
	Let $E_1,E_2$, $\alpha_1,\alpha_2$, $F_1,F_2$ be the Chevalley generators of $^t\g$. We fix the isomorphism $\iota:{}^t\g\to \g$ sending $E_i\mapsto e_{3-i}$, $F_i\mapsto f_{3-i}$ and $\alpha_i\mapsto\calpha_{3-i}$, $i=1,2$.
	
	One has the Cartan decomposition $^t\g={}^t\n_-\oplus{}^t\h\oplus{}^t\n_+$. Introduce also the positive and negative Borel subalgebras $^t\mathfrak b={}^t\h\oplus{}^t\n_+$ and $^t\mathfrak b_-= {}^t\h\oplus {}^t\n_-$.
	
	Let $\mathscr G$ be a simple Lie group of type G$_2$, $\mathscr B$ a Borel subgroup, and $\mathscr N=[\mathscr B,\mathscr B]$ its unipotent radical, with the corresponding Lie algebras ${}^t\g\supset{}^t\mathfrak b\supset{}^t\n_+$. The group $\mathscr G$ acts on $^t\g$ by adjoint action.
	
	 Let $p_{-1}=F_1+F_2$ be the regular nilpotent element. The set $p_{-1}+{}^t\mathfrak b=\{p_{-1}+b~|~b\in{}^t\mathfrak b\}$ is invariant under the adjoint action of $\mathscr N$. Consider the quotient space $(p_{-1}+{}^t\mathfrak b)/ \mathscr N$ and denote the $\mathscr N$-orbit of $g\in p_{-1}+{}^t\mathfrak b$ by $[g]_{^t\g}$.
	
	The action of $W$ on $\h=(^t\h)^*$ is given by $s_i(\cmu)=\cmu-\langle \alpha_i,\cmu\rangle \calpha_i$ for $\cmu\in\h$. We use the notation
	\[ \sigma\cdot\cla=\sigma(\cla+\crho)-\crho,\quad \sigma\in W,\quad \cla\in\h,\]
	for the shifted action of the Weyl group on $\h$. Here $\crho=5\calpha_1+3\calpha_2$.
	
	For a vector space $X$ we denote by $\cM(X)$ the space of $X$-valued meromorphic functions on $\mathbb P^1$. For a group $R$ we denote by $R(\cM)$ the group of $R$-valued meromorphic functions on $\bP^1$.
	
	\medskip

	Consider a $\g$-weight $\la$ as a $^t\g$-coweight, then $\la\in {}^t\h$. Moreover, note that
	\[\la=(2\la_1+\la_2)\alpha_1+(3\la_1+2\la_2)\alpha_2,\]
	
	We use the $7$-dimensional irreducible representation $\pi\circ \iota:  {}^t\g\to \End(\C^7)$
	to write elements of $^t\g$ as matrices. In particular, $\la$ is given by
	\[
	\mathrm{diag}(2\la_1+\la_2,\ \la_1+\la_2,\ \la_1,\ 0,\ -\la_1,\ -\la_1-\la_2,\ -2\la_1-\la_2).
	\]
	
	\subsection{Miura $^t\g$-opers}\label{sec Miura}
	We recall the basic facts about $^t\g$-opers, see \cite{F2} for more detail.
	
	Fix a global coordinate $x$ on $\C\subset\bP^1$. Consider the following subset of differential operators
	\[
	\text{op}_{^t\g}(\bP^1)=\{\pa_x+p_{-1}+\bs v~|~ \bs v\in\cM(^t\mathfrak b)\}.
	\]This set is stable under the gauge action of the unipotent subgroup $\mathscr N(\cM)\subset \mathscr G(\cM)$. The space of ${^t\g}$-{\it opers} is defined as the quotient space $\Opg(\bP^1):=\opg(\bP^1)/ \mathscr N(\cM)$. We denote by $[\nabla]$ the class of $\nabla\in \opg(\bP^1)$ in $\Opg(\bP^1)$.
	
	We say that $\nabla=\pa_x+p_{-1}+\bs v\in\opg(\bP^1)$ is {\it regular} at $z\in \C$ if $\bs v$ has no pole at $z$. A $^t\g$-oper $[\nabla]$ is said to be {\it regular} at $z\in \C$ if there exists $f\in\mathscr N(\cM)$ such that $f^{-1}\cdot\nabla\cdot f$ is regular at $z$.
	
	Let $\nabla=\pa_x+p_{-1}+\bs v$ be a representative of a $^t\g$-oper $[\nabla]$. Consider $\nabla$ as a $\mathscr G$-connection on the trivial principal bundle $p:\mathscr G\times \bP^1\to \bP^1$. The connection has singularities at the set $\mathrm{Sing}\subset \C$ where the function $\bs v$ has poles (and maybe at infinity). Parallel translations with respect to the connection define the monodromy representation $\pi_1(\C\setminus\mathrm{Sing})\to \mathscr G$. Its image is called the {\it monodromy group} of $\nabla$. If the monodromy group of one of the representatives of $[\nabla]$ is $\{I\}\subset\mathscr G$, we say that $[\nabla]$ is a {\it monodromy-free} $^t\g$-oper.
	
	A {\it Miura $^t\g$-oper} is a differential operator of the form $\nabla=\pa_x+p_{-1}+\bs v$, where $\bs v\in\cM(^t\h)$. 
	
	A $^t\g$-oper $[\nabla]$ has {\it regular singularity} at $z\in\C$, if there exists a representative $\nabla$ of $[\nabla]$ such that
	\[
	(x-z)^{\rho}\cdot\nabla\cdot (x-z)^{-\rho}=\pa_x+\frac{p_{-1}+\bs w}{x-z},
	\]where $\bs w\in\cM(^t\mathfrak b)$ is regular at $z$. The {\it residue of $[\nabla]$ at $z$} is $[p_{-1}+\bs w(z)]_{^t\g}$. We denote the residue of $[\nabla]$ at $z$ by $\mathrm{res}_{z}[\nabla]$.
	
	Similarly, a $^t\g$-oper $[\nabla]$ has {\it regular singularity} at $\infty\in\bP^1$, if there exists a representative $\nabla$ of $[\nabla]$ such that
	\[
	x^{\rho}\cdot\nabla\cdot x^{-\rho}=\pa_x+\frac{p_{-1}+\bs {\tilde w} }{x},
	\]where $\bs{\tilde w}\in\cM(^t\mathfrak b)$ is regular at $\infty$. The {\it residue of $[\nabla]$ at $\infty$} is $-[p_{-1}+\bs {\tilde w}(\infty)]_{^t\g}$. We denote the residue of $[\nabla]$ at $\infty$ by $\mathrm{res}_{\infty}[\nabla]$.
	
	\begin{lem}\label{lem res id}
    Any orbit $[p_{-1}+b]_{^t\g}$, $b\in {}^t\mathfrak{b}$, contains a representative of the form $p_{-1}-\rho-\la$ for $\la\in\h^*$. For any $\la,\mu\in\h^*$, we have $[p_{-1}-\rho-\la]_{^t\g}=[p_{-1}-\rho-\mu]_{^t\g}$ if and only if there exists $\sigma\in W$ such that $\la=\sigma\cdot \mu$. \qed
	\end{lem}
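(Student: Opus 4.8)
The plan is to identify the $\mathscr N$-orbits in $p_{-1}+{}^t\mathfrak b$ with the ordinary $W$-orbits in ${}^t\h$, and then to convert that ordinary action into the shifted action on $\h^*$ via the substitution $\chi=-\rho-\la$. Let $\mathrm{Ch}\colon{}^t\g\to{}^t\g/\!\!/\mathscr G$ be the adjoint quotient; by Chevalley's restriction theorem ${}^t\g/\!\!/\mathscr G\cong{}^t\h/W$ with $W$ acting in the usual unshifted way, and the identification $\h^*\cong{}^t\h$ of $\g$-weights with ${}^t\g$-coweights is $W$-equivariant. I would first isolate the two facts that drive everything:
(A) the composition $\Phi\colon p_{-1}+{}^t\mathfrak b\hookrightarrow{}^t\g\xrightarrow{\mathrm{Ch}}{}^t\h/W$ is constant on $\mathscr N$-orbits and induces a \emph{bijection} $(p_{-1}+{}^t\mathfrak b)/\mathscr N\xrightarrow{\sim}{}^t\h/W$; and
(B) for every $\chi\in{}^t\h$ one has $\Phi(p_{-1}+\chi)=[\chi]_W$, the $W$-orbit of $\chi$.

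Fact (A) is the classical theorem of Kostant on the principal slice, equivalently the canonical form of opers (see \cite{F2}): $\mathscr N$ acts freely on $p_{-1}+{}^t\mathfrak b$, every orbit contains a unique point of the principal slice $p_{-1}+\mathfrak a$ (with $\mathfrak a$ the centralizer of a principal nilpotent completing $p_{-1}$ to an $\mathfrak{sl}_2$-triple), and $\mathrm{Ch}$ restricts to an isomorphism of this slice onto ${}^t\h/W$. I would simply invoke it. For fact (B) I would use the principal gradation. Let $\gamma\colon\C^\times\to\mathscr G$ be the cocharacter for which $\mathrm{Ad}(\gamma(s))$ scales the degree-$k$ component of the principal gradation of ${}^t\g$ by $s^{k}$; since $p_{-1}$ has degree $-1$ and ${}^t\h={}^t\g_0$, we get $\mathrm{Ad}(\gamma(s))(p_{-1}+\chi)=s^{-1}p_{-1}+\chi$. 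As $\mathrm{Ch}$ is $\mathscr G$-invariant and continuous, letting $s\to\infty$ gives $\mathrm{Ch}(p_{-1}+\chi)=\mathrm{Ch}(\chi)=[\chi]_W$, proving (B).

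With (A) and (B) in hand the lemma is formal. Since $\la\mapsto-\rho-\la$ is a bijection $\h^*\to{}^t\h$, every class in ${}^t\h/W$ equals $[-\rho-\la]_W$ for some $\la\in\h^*$; by (A) and (B) the $\mathscr N$-orbit $\Phi^{-1}([-\rho-\la]_W)$ is exactly $[p_{-1}-\rho-\la]_{^t\g}$, and as the class ranges over ${}^t\h/W$ these exhaust all orbits, giving the first assertion. For the second, injectivity of $\Phi$ on orbits in (A) shows $[p_{-1}-\rho-\la]_{^t\g}=[p_{-1}-\rho-\mu]_{^t\g}$ iff $[-\rho-\la]_W=[-\rho-\mu]_W$, i.e. $-\rho-\mu=\sigma(-\rho-\la)$ for some $\sigma\in W$; by $W$-equivariance and linearity of the identification this rearranges to $\rho+\mu=\sigma(\rho+\la)$, that is $\mu=\sigma\cdot\la$ for the shifted action, which is the claim (interchanging $\la,\mu$ is immaterial as $W$ is a group).

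The only load-bearing input is fact (A); the rest is formal. If one preferred a self-contained type-G$_2$ argument in place of citing Kostant, I would instead work inside the faithful representation $\pi\circ\iota$, where $-\rho-\la$ is diagonal and $p_{-1}-\rho-\la$ is an explicit companion-type $7\times7$ matrix: one would compute its characteristic polynomial, verify that the nonconstant coefficients generate $\C[{}^t\h]^{W}$ (the G$_2$ invariants of degrees $2$ and $6$) and hence determine $\la$ up to shifted $W$, and check that the $\mathscr N$-action reduces an arbitrary $p_{-1}+b$ to this diagonal-plus-$p_{-1}$ form. That direct route avoids external citations but is more laborious, and I expect matching the invariants explicitly to be its main computational obstacle; in the conceptual route the single nontrivial step is establishing (A).
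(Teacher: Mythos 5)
Your proof is correct: fact (A) is exactly Kostant's principal-slice theorem (equivalently the canonical form of opers, as in \cite{F2}), fact (B) via the principal-gradation limit $\mathrm{Ad}(\gamma(s))(p_{-1}+\chi)=s^{-1}p_{-1}+\chi$ is a clean and valid way to see $\mathrm{Ch}(p_{-1}+\chi)=[\chi]_W$, and the translation $\chi=-\rho-\la$ converting the ordinary $W$-action into the shifted action is carried out correctly. The paper states this lemma without proof, as a standard fact from the theory of opers, and your argument is precisely the standard one it implicitly relies on, so the two approaches coincide.
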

	
 By an abuse of notation we will write $[\la]_{ W}$ for $[p_{-1}-\rho-\la]_{^t\g}$. In particular, if $[\nabla]$ is regular at $z$, then $\mathrm{res}_z[\nabla]=[0]_{ W}$.
	
	Let $\bLa=(\la^{(1)},\dots,\la^{(n)},\la)$ be a sequence of $n+1$ dominant integral $\g$-weights and let $\bs z=(z_1,\dots,z_n,\infty)\in {\mathring{\bP}_{n+1}}$. Let $\text{Op}_{^t\g}(\mathbb{P}^1)_{\bLa,\bm{z}}^{\text{RS}}$ denote the set of all ${^t\g}$-opers with at most regular singularities at points $z_s$ and $\infty$ whose residues are given by
	\[
	\text{res}_{z_s}[\nabla]=[\la^{(s)}]_{ W},\quad \text{res}_{\infty}[\nabla]=-[\la]_{ W},\quad s=1,\dots,n,
	\]and which are regular elsewhere. Let $\text{Op}_{^t\g}(\mathbb{P}^1)_{\bLa,\bm{z}}\subset \text{Op}_{^t\g}(\mathbb{P}^1)_{\bLa,\bm{z}}^{\text{RS}}$ denote the subset consisting of those ${^t\g}$-opers which are also monodromy-free.
	
	\begin{lem}[\cite{F2}]\label{lem sur}
		For every $\g$-oper $[\nabla]\in\Opg(\bP^1)_{\bLa,\bm{z}}$, there exists a Miura ${^t\g}$-oper as one of its representatives.\qed
	\end{lem}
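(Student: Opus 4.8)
The plan is to realize a Miura ${}^t\g$-oper as an oper equipped with a second, $\nabla$-horizontal Borel reduction that is in generic relative position to the oper reduction, and to produce such a reduction using the monodromy-free hypothesis. Recall that a representative $\nabla=\pa_x+p_{-1}+\bs v\in\opg(\bP^1)$ of $[\nabla]\in\Opg(\bP^1)_{\bLa,\bm z}$ defines a flat $\mathscr G$-connection on the trivial bundle over $\C\setminus\mathrm{Sing}$ together with the oper reduction to $\mathscr B$. A Miura structure amounts to a second flat $\mathscr B$-reduction $\mathcal F'_{\mathscr B}$; when $\mathcal F'_{\mathscr B}$ is transverse to the oper reduction at the generic point of $\bP^1$, a direct gauge computation shows that $[\nabla]$ has a representative of the form $\pa_x+p_{-1}+\bs v$ with $\bs v\in\cM({}^t\h)$, i.e.\ a Miura oper (this is the content of the Miura transformation in \cite{F2}). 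Thus it is enough to construct one flat, generically transverse Borel reduction.

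To build it, I would use that $[\nabla]$ is monodromy-free: a representative $\nabla$ has trivial monodromy $\pi_1(\C\setminus\mathrm{Sing})\to\mathscr G$, so a fundamental solution $\Psi$ with $\Psi'=-(p_{-1}+\bs v)\Psi$ is single-valued and meromorphic on $\bP^1$, and gauging by $\Psi$ trivializes the connection. In this flat gauge, flat $\mathscr B$-reductions are exactly the constant flags $w\in\mathscr G/\mathscr B$, while the oper reduction becomes the moving flag $\phi(x)=\Psi(x)^{-1}\cdot o$, where $o\in\mathscr G/\mathscr B$ is the base point fixed by $\mathscr B$. I would then pick $w$ in generic relative position to $\phi(x)$ for generic $x$: for each fixed generic $x$ the flags failing to be transverse to $\phi(x)$ form the complement of the big cell, a proper Zariski-closed subset of $\mathscr G/\mathscr B$, so a generic constant $w$ is transverse to $\phi(x)$ away from finitely many $x$. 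This $w$ yields the desired flat reduction, and by the previous paragraph produces the Miura representative.

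The main obstacle is the transversality step: one must rule out that $\phi(\bP^1)$ is trapped in the non-transversality locus of every flag, i.e.\ that the oper curve is degenerate. Here the oper structure is decisive --- because $p_{-1}=F_1+F_2$ is a regular nilpotent, the flag $\phi(x)$ osculates maximally (its successive derivatives span), so its image meets the big cell of a generic $w$, and a generic $w$ is admissible. A secondary point to check is the behavior at $\mathrm{Sing}$ and at $\infty$: the residues $[\la^{(s)}]_W$ are integral by Lemma \ref{lem res id}, and together with monodromy-freeness this guarantees that $\Psi$, hence the trivialization and the resulting $\bs v$, extend meromorphically across the singular points, so that indeed $\bs v\in\cM({}^t\h)$. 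With these points established, the lemma follows.
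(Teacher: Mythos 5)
Your proposal is correct and follows essentially the same route as the paper, which for this lemma simply invokes Frenkel's argument in \cite{F2}: use monodromy-freeness and the regular singularities to get a single-valued meromorphic fundamental solution, realize horizontal Borel reductions as constant flags in the flat gauge, choose one transverse to the oper flag generically, and gauge by $\mathscr N(\cM)$ into the Cartan form $\pa_x+p_{-1}+\bs v$, $\bs v\in\cM({}^t\h)$. The only remark worth making is that your osculation step is not needed: once $w$ is chosen transverse to $\phi(x_0)$ at a single regular point $x_0$, the non-transversality locus is the zero set of a nonzero meromorphic function on $\bP^1$ and hence automatically finite.
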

	
	\begin{lem}[\cite{F2}]\label{lem m-form}
		Let $\nabla$ be a Miura ${^t\g}$-oper, then $[\nabla]\in\Opg(\bP^1)_{\bLa,\bm{z}}^{\mathrm{RS}}$ if and only if the following conditions hold:
		\begin{enumerate}
			\item $\nabla$ is of the form
			\[
			\nabla=\pa_x+p_{-1}-\sum_{s=1}^n\frac{\sigma_s\cdot\la^{(s)}}{x-z_s}-\sum_{j=1}^m\frac{\tl \sigma_j\cdot 0}{x-t_j}
			\]
			for some $m\in\Z_{\gge 0}$, $\sigma_s\in W$ for $s=1,\dots,n$ and $ \tl \sigma_j\in  W$, $t_j\in\bP^1\setminus  \bm z$ for $j=1,\dots,m$,
			\item there exists $\sigma_\infty\in W$ such that
			\[
			\sum_{s=1}^n \sigma_s\cdot\la^{(s)}+\sum_{j=1}^m \tl \sigma_j\cdot 0=\sigma_\infty\cdot \la,
			\]
			\item $[\nabla]$ is regular at $t_j$ for $j=1,\dots,m$.
		\end{enumerate}\qed
	\end{lem}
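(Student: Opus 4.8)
\emph{Proof proposal.} The plan is to prove both directions by local analysis at each marked point, reducing every assertion to a local residue computation together with Lemma \ref{lem res id}. The key observation is that for a Miura $^t\g$-oper $\nabla=\pa_x+p_{-1}+\bs v$ with $\bs v\in\cM(^t\h)$, the oper singularity at a point is governed entirely by the pole of $\bs v$ there. First I would record the local normal form: if $\bs v$ has at most a simple pole at $z\in\C$, say $\bs v=-\nu/(x-z)+O(1)$ with $\nu\in{}^t\h$, then since $\rho$ is the principal grading element (it acts on $p_{-1}=F_1+F_2$ with degree $-1$ and commutes with $^t\h$), conjugation by $(x-z)^{\rho}$ yields
\[
(x-z)^{\rho}\cdot\nabla\cdot(x-z)^{-\rho}=\pa_x+\frac{p_{-1}-\rho-\nu}{x-z}+O(1).
\]
Hence $[\nabla]$ has regular singularity at $z$ with residue $[p_{-1}-\rho-\nu]_{^t\g}=[\nu]_{W}$. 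The analogous conjugation by $x^{\rho}$ at infinity, using that $\bs v$ is a Cartan element and so is fixed by the conjugation, shows that the residue at $\infty$ equals $-[S]_W$, where $-S$ is the coefficient of $1/x$ in the Laurent expansion of $\bs v$ at $\infty$.

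For the \emph{if} direction I would start from the displayed shape in (i) and read off residues. At $z_s$ the residue of $\bs v$ is $-\sigma_s\cdot\la^{(s)}$, so the oper residue is $[\sigma_s\cdot\la^{(s)}]_W=[\la^{(s)}]_W$, the last equality by Lemma \ref{lem res id} since $\sigma_s\cdot\la^{(s)}$ and $\la^{(s)}$ lie in one shifted $W$-orbit. At each $t_j$ the residue of $\bs v$ is $-\tl\sigma_j\cdot 0$, giving oper residue $[0]_W$; combined with hypothesis (iii) this says $[\nabla]$ is genuinely regular at $t_j$. At infinity, expanding $\bs v\sim -S/x$ with $S=\sum_s\sigma_s\cdot\la^{(s)}+\sum_j\tl\sigma_j\cdot 0$ and invoking (ii), namely $S=\sigma_\infty\cdot\la$, gives residue $-[\la]_W$. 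Thus $[\nabla]\in\Opg(\bP^1)_{\bLa,\bs z}^{\mathrm{RS}}$.

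For the \emph{only if} direction I would show that the prescribed singularity data force the shape (i). Because $[\nabla]$ has at most regular singularities at the $z_s$ and $\infty$ and is regular elsewhere, and because a pole of $\bs v$ of order $\gge 2$ produces an irregular oper singularity that no $\mathscr N(\cM)$-gauge can remove, the Cartan-valued $\bs v$ can have only simple poles, located at the $z_s$ and at finitely many further points $t_j$, and must decay at $\infty$; a partial-fraction decomposition then gives exactly the form (i) with residues $\nu_s,\nu_j\in{}^t\h$. Matching the prescribed residue $[\la^{(s)}]_W$ at $z_s$ and applying Lemma \ref{lem res id} pins down $\nu_s=\sigma_s\cdot\la^{(s)}$; since $t_j\notin\{z_s,\infty\}$ the oper is regular there, forcing $[\nu_j]_W=[0]_W$, i.e.\ $\nu_j=\tl\sigma_j\cdot 0$, which is (iii); and matching $-[\la]_W$ at $\infty$ gives (ii).

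The hard part will be the structural claim used in the \emph{only if} direction: that regular singularity of the oper $[\nabla]$ is equivalent to the Cartan connection $\bs v$ having at most a simple pole. This requires controlling how the pole order of a $^t\h$-valued function propagates to the oper's singularity type under the gauge action of $\mathscr N(\cM)$, and in particular ruling out that an order $\gge 2$ pole could be gauged to a regular singularity. I would also note the asymmetry that the residue $[0]_W$ at $t_j$ is automatic in the \emph{only if} direction but insufficient for regularity in the \emph{if} direction, where condition (iii) must be imposed to exclude nonremovable (logarithmic) local behavior. These technical points are exactly the content of \cite{F2}, on which the argument ultimately rests.
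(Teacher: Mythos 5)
Your proposal is correct, and there is nothing in the paper to compare it against: the lemma is quoted from \cite{F2} with a \qed{} and no argument of its own. Your sketch is in fact the standard proof from Frenkel's paper: the conjugation $(x-z)^{\rho}\cdot\nabla\cdot(x-z)^{-\rho}$ turning a simple pole of the Cartan connection with residue $-\nu$ into the oper residue $[p_{-1}-\rho-\nu]_{^t\g}=[\nu]_W$ (and similarly $-[S]_W$ at $\infty$), Lemma \ref{lem res id} to convert residue matching into the shifted Weyl-orbit conditions of (i) and (ii), and the definition of $\Opg(\bP^1)_{\bLa,\bm{z}}^{\mathrm{RS}}$ to get (iii) for free in one direction while imposing it in the other. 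You also correctly isolate the one genuinely nontrivial ingredient — that a pole of order $\gge 2$ in the $^t\h$-valued connection cannot be gauged by $\mathscr N(\cM)$ to a regular singularity of the underlying oper, so that membership in $\Opg(\bP^1)_{\bLa,\bm{z}}^{\mathrm{RS}}$ forces the partial-fraction shape (i) — and defer it to \cite{F2}, which is exactly where the paper's citation places the burden as well.
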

	
	\subsection{Miura transformation}
	Using the representation $\pi\circ \iota$ of $^t\g$, one can consider a Miura $^t\g$-oper $\nabla=\pa_x+p_{-1}+\bm v(x)$  as a differential operator with coefficients in $7\times 7$-matrices. Note that $\bm v(x)$ has the following matrix form
	\[
	\bm v(x)={\rm diag}(2v_1(x)+v_2(x),v_1(x)+v_2(x),v_1(x),0,-v_1(x),-v_1(x)-v_2(x),-2v_1(x)-v_2(x)),
	\]
	where $v_i(x)=\langle \bs v(x),\calpha_i\rangle$.
	
	In a standard way, we convert the $7\times 7$-matrix  differential operator $\nabla=\pa_x+p_{-1}+\bm v(x)$ of order $1$ to a scalar linear differential operator of order $7$:
	\beq\label{eq:miura}
	L_\nabla=(\pa_x-2v_1-v_2)(\pa_x-v_1-v_2)(\pa_x-v_1)\pa_x(\pa_x+v_1)(\pa_x+v_1+v_2)(\pa_x+2v_1+v_2).
	\eeq
	Let $\bm u(x)=(u_1(x),u_2(x),\dots,u_7(x))$ be a solution of $\nabla$, then $L_\nabla u_7=0$.
	Clearly, different Miura representatives of $[\nabla]$ give the same differential operator, we have a well-defined scalar differential operator $L_{[\nabla]}$.

	The following lemma follows from the definitions.
	\begin{lem}\label{lem monodromy}
		Suppose $\nabla$ is a Miura $^t\g$-oper with $[\nabla]\in\Opg(\bP^1)_{\bLa,\bm{z}}$. Then $L_{[\nabla]}$ is a monodromy-free scalar differential operator.\qed
	\end{lem}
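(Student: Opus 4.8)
The plan is to follow the monodromy through the three avatars of $\nabla$: as an abstract $^t\g$-oper (a $\mathscr G$-connection), as a $7\times 7$ matrix connection obtained from the faithful representation $\pi\circ\iota$, and finally as the scalar operator $L_\nabla$.

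First I would observe that monodromy-freeness descends to the Miura representative itself. By definition $[\nabla]\in\Opg(\bP^1)_{\bLa,\bm z}$ has \emph{some} representative with trivial monodromy group in $\mathscr G$, and any two representatives differ by a gauge transformation $f\in\mathscr N(\cM)$. Since such an $f$ is single-valued away from its poles, it conjugates the monodromy around any loop by the value $f(p)$ at the base point $p$. Conjugation preserves triviality, so the Miura oper $\nabla=\pa_x+p_{-1}+\bm v(x)$ itself has trivial monodromy group in $\mathscr G$.

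Next I would transport this through the representation. The homomorphism $\pi\circ\iota$ integrates to a group homomorphism $\mathscr G\to GL_7$, so regarding $\nabla$ as a first-order $7\times 7$ system via $\pi\circ\iota$, its monodromy group is the image of the monodromy group of the $\mathscr G$-oper, hence $\{I\}\subset GL_7$. Thus every flat section $\bm u=(u_1,\dots,u_7)$ of the matrix connection is single-valued on $\bP^1$ away from the singular set $\{z_1,\dots,z_n,\infty\}$.

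Finally I would identify the solution space of $L_\nabla$ with these flat sections. Because $p_{-1}=F_1+F_2$ is regular nilpotent, its image in $\End(\C^7)$ has the chain structure linking consecutive basis vectors, so the equations $\nabla\bm u=0$ express $u_6,\dots,u_1$ successively in terms of $u_7$ and its derivatives; the remaining compatibility relation is precisely $L_\nabla u_7=0$, the scalar equation produced by the Miura transformation \eqref{eq:miura}. Hence $\bm u\mapsto u_7$ is an isomorphism from the space of flat sections of $\nabla$ onto the solution space of $L_{[\nabla]}$, and it intertwines the two monodromy actions. Single-valuedness of all $\bm u$ therefore forces every solution $u_7$ of $L_{[\nabla]}$ to be single-valued, i.e. $L_{[\nabla]}$ is monodromy-free. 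The only point demanding care is this last identification---that passing to the cyclic coordinate $u_7$ gives a monodromy-equivariant isomorphism---but this is exactly the cyclic-vector property of the regular nilpotent $p_{-1}$ in the seven-dimensional representation that already underlies \eqref{eq:miura}.
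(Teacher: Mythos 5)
Your proof is correct and is precisely the argument the paper has in mind: the paper gives no written proof (it states the lemma ``follows from the definitions''), and your three steps---gauge transformations by single-valued $f\in\mathscr N(\cM)$ only conjugate the monodromy, the representation $\pi\circ\iota$ carries the trivial $\mathscr G$-monodromy to $GL_7$, and the cyclic-vector property of the regular nilpotent $p_{-1}$ identifies flat sections with solutions of $L_{[\nabla]}$ monodromy-equivariantly---are exactly the definitional content being invoked. No gaps; you have simply written out the details the authors left implicit.
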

	
	The following lemma describes the singularities of the scalar differential operator $L_{[\nabla]}$.
	\begin{lem}\label{lem exps}
		Suppose $\nabla$ is a Miura $^t\g$-oper with $[\nabla]\in\Opg(\bP^1)_{\bLa,\bm{z}}$, then $L_{[\nabla]}$ is a monic Fuchsian differential operator with singularities at $z_s$, $s=1,\dots,n$, and $\infty$ only. The exponents of $L_{[\nabla]}$ at $z_s$ are given by \eqref{exp at z} and at $\infty$ by \eqref{exp at inf}.
	\end{lem}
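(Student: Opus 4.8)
The plan is to analyze $L_{[\nabla]}$ locally at each point of $\bP^1$ by feeding the explicit normal form from Lemma \ref{lem m-form} into the factorization \eqref{eq:miura}. By Lemma \ref{lem m-form}(1) I may fix a Miura representative
\[
\nabla=\pa_x+p_{-1}-\sum_{s=1}^n\frac{\sigma_s\cdot\la^{(s)}}{x-z_s}-\sum_{j=1}^m\frac{\tl\sigma_j\cdot 0}{x-t_j},
\]
so each $v_i(x)=\langle\bs v(x),\calpha_i\rangle$ is rational with at worst simple poles at the $z_s$ and $t_j$ and is regular at $\infty$. Substituting into \eqref{eq:miura} shows at once that $L_{[\nabla]}$ is monic of order $7$ with rational coefficients, that every finite point off $\{z_s\}\cup\{t_j\}$ is an ordinary point, and that (since the poles of the $v_i$ are simple) each of $z_s$, $t_j$, $\infty$ is at worst a regular singular point. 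Hence $L_{[\nabla]}$ is monic and Fuchsian, and it remains to remove the $t_j$ and to compute the exponents at $z_s$ and $\infty$.

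First I would dispose of the apparent points $t_j$. The residue of $[\nabla]$ at $t_j$ is $[0]_W$, by Lemma \ref{lem m-form}(3) the oper is regular there, and by Lemma \ref{lem monodromy} the operator $L_{[\nabla]}$ is monodromy-free. A regular singular point with trivial residue and trivial local monodromy has exponents $0,1,\dots,6$ and no logarithmic solutions, so all local solutions are holomorphic with nonvanishing Wronskian; thus $t_j$ is in fact an ordinary point of $L_{[\nabla]}$, and the only genuine singularities are the $z_s$ and $\infty$.

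To read off the exponents at $z_s$ I would use the indicial polynomial of the factorized operator. Let $\epsilon_1,\dots,\epsilon_7$ denote the weights of the $7$-dimensional representation $\pi\circ\iota$, so that the diagonal entries of a coweight are its pairings with the $\epsilon_k$. Near $z_s$ each of the seven first-order factors in \eqref{eq:miura} has a simple pole, and the indicial polynomial of the product is a product of seven linear factors whose roots differ by integers. Since the residue is the $W$-orbit of the \emph{dominant} integral weight $\la^{(s)}$ and $L_{[\nabla]}$ has no logarithms at $z_s$, the exponents are forced to be the unique set attached to the dominant representative, namely $\langle\la^{(s)}+\rho,\epsilon_k\rangle+3$ for $k=1,\dots,7$; expanding these pairings gives precisely \eqref{exp at z}, in agreement with the highest-weight computation of $\mc D_{v^+}$ in Lemma \ref{exponents}. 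The exponents at $\infty$ are obtained in the same manner: by Lemma \ref{lem m-form}(2) the residue there is $-[\la]_W$, and the analogous local analysis (with the sign and shift appropriate to the point at infinity) yields $-\langle\la+\rho,\epsilon_k\rangle-3$, which is \eqref{exp at inf}.

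The main obstacle I anticipate is the exponent bookkeeping in the presence of resonances: for a generic Miura representative the naive product of indicial factors can have repeated roots, a priori signalling logarithmic (hence multivalued) solutions. The crux is that monodromy-freeness of $[\nabla]$ together with the integrality and dominance of $\la^{(s)}$ rectifies this and pins the exponents to the dominant $W$-invariant set, independently of the chosen $\sigma_s$. Making this independence and the absence of logarithms precise, and carrying out the same verification at $\infty$ via the balance condition of Lemma \ref{lem m-form}(2), is the delicate part; once these are in place, checking that the seven shifted pairings reproduce \eqref{exp at z} and \eqref{exp at inf} is a routine expansion identical to the one already performed in the proof of Lemma \ref{exponents}.
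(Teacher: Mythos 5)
Your proposal is correct and lands on the same computation, but it handles the key step by a different device than the paper. The paper's proof is a two-line reduction: after noting that $\nabla$ has the normal form of Lemma \ref{lem m-form}, it invokes Lemma \ref{lem res id} together with Theorem 5.11 of \cite{F2} to change the Miura representative, one singular point at a time, to one with $\sigma_s=1$ (and likewise $\tl\sigma_j=1$ at $t_j$, which removes that pole altogether, consistent with condition (iii)); since $L_{[\nabla]}$ is independent of the representative, the exponents are then read off from \eqref{eq:miura} with dominant residues, exactly as in Lemma \ref{exponents}. You instead keep an arbitrary representative and argue intrinsically, which works, but note that the mechanism at $z_s$ and $\infty$ is not the one you name: monodromy-freeness and absence of logarithms play no role in pinning the exponents there. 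Indeed, if the local residue of $\bm v$ is $-\sigma_s\cdot\la^{(s)}$, the indicial roots of the factorized operator \eqref{eq:miura} are $\langle\sigma_s\cdot\la^{(s)}+\rho,\epsilon_k\rangle+3=\langle\sigma_s(\la^{(s)}+\rho),\epsilon_k\rangle+3$, $k=1,\dots,7$, and since $W$ permutes the weights $\epsilon_1,\dots,\epsilon_7$ of $V_{\omega_2}$, this multiset is automatically independent of $\sigma_s$ and equals $\{\langle\la^{(s)}+\rho,\epsilon_k\rangle+3\}$, which is \eqref{exp at z}; moreover your feared resonance is vacuous, as these seven numbers are pairwise distinct because $\la^{(s)}+\rho$ is regular dominant (and the lemma asserts only the exponents, so logarithms need not be excluded at $z_s$ in any case). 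Monodromy-freeness is genuinely needed exactly where you deploy it: at the $t_j$, where the residue class $[0]_W$ gives indicial roots $0,1,\dots,6$, triviality of the local monodromy excludes logarithms, and your Wronskian argument then shows $t_j$ is an ordinary point — this is your substitute for the paper's use of Lemma \ref{lem m-form}(iii) via the normalized representative. So your route trades the citation of \cite{F2} for an elementary $W$-invariance observation (the $\rho$-shift compensating the integer shifts in the factorization), which is a perfectly good trade provided you state that compensation explicitly instead of attributing the $\sigma_s$-independence to the absence of logarithms.
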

	\begin{proof}
		Note that $\nabla$ satisfies the conditions (i)-(iii) in Lemma \ref{lem m-form}. By Lemma \ref{lem res id} and Theorem 5.11 in \cite{F2}, we can assume $\sigma_s=1$ for given $s$. The lemma follows directly.
	\end{proof}
	
	\subsection{Relations with pure self-self-dual spaces}
	Let $\bLa=(\la^{(1)},\dots,\la^{(n)},\la)$ be a sequence of $n+1$ dominant integral $\g$-weights and let $\bs z=(z_1,\dots,z_n,\infty)\in {\mathring{\bP}_{n+1}}$.
	
	Choose $d$ large enough so that 
	\beq\label{k}
	k:=d-7-\sum_{s=1}^n(2\la_1^{(s)}+\la_1^{(s)})-(2\la_1+\la_2)\gge 0.
	\eeq
	Let $\bm k=(0,\dots,0,k)$. Note that we always have $|\bLa_{A,\bm k}|=7(d-7)$ and spaces of polynomials in $\mb S\Omega_{\bLa,\bm k,\bm z}$ ($=\mb S\Omega_{\bLa_{A,\bm k},\bm z}$) are pure self-self-dual.
	
	Let $\bs T=(T_1,\dots,T_{7})$ be associated with $\bLa_{A,\bm k},\bs z$, then $T_7=1$, $T_1=T_3=T_4=T_6$, and $T_2=T_5$.
	\begin{thm}\label{thm bij oper sd}
		There exists a bijection between $\Opg(\mathbb{P}^1)_{\bLa,\bm{z}}$ and $\mb S\Omega_{\bLa,\bm k,\bm z}$ given by the map $[\nabla]\mapsto \Ker (T_1^2T_2\cdot L_{[\nabla]}\cdot T_1^{-2}T_2^{-1})$.
	\end{thm}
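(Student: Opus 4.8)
The plan is to factor the map of the theorem as a composition $[\nabla]\mapsto L_{[\nabla]}\mapsto \tilde L:=P\,L_{[\nabla]}\,P^{-1}\mapsto \Ker\tilde L$, where $P=T_1^2T_2$, and to verify well-definedness and bijectivity in turn. For well-definedness I would first argue that $X:=\Ker\tilde L$ is a $7$-dimensional space of polynomials of degree less than $d$. By Lemmas \ref{lem monodromy} and \ref{lem exps}, $L_{[\nabla]}$ is a monic monodromy-free Fuchsian operator whose only singular points are $z_1,\dots,z_n,\infty$, with distinct integer exponents given by \eqref{exp at z} and \eqref{exp at inf}. Conjugation by $P$ shifts every exponent at $z_s$ by $\mathrm{val}_{z_s}P=2\la^{(s)}_1+\la^{(s)}_2$, turning \eqref{exp at z} into the nonnegative integers $0<\la^{(s)}_1+1<\dots<4\la^{(s)}_1+2\la^{(s)}_2+6$, and correspondingly normalizes the exponents at $\infty$. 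Since $\tilde L$ remains monodromy-free with distinct integer exponents (hence no logarithmic solutions), all local solutions are single-valued, and holomorphy at the finite points forces $\Ker\tilde L$ to consist of polynomials. Comparing the shifted orders $\mu_i+7-i$ against the defining conditions of the Schubert cells $\Omega_{\la^{(s)}_{A,k_s}}(\mc F(z_s))$ and $\Omega_{\la_{A,k}}(\mc F(\infty))$ then yields $X\in\Omega_{\bLa,\bm k,\bm z}$.

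Next I would show $X$ is pure self-self-dual using the special factorization \eqref{eq:miura}. By Lemmas \ref{lem sur} and \ref{lem m-form}, $[\nabla]$ has a Miura representative $\pa_x+p_{-1}+\bs v$ whose only poles besides $z_s,\infty$ are apparent singularities $t_j$ at which the oper is regular; writing $v_i=\langle\bs v,\calpha_i\rangle$, the residue data packages into a pair of monic polynomials $\bm y=(y_1,y_2)$, with roots the $t_j$, for which $v_1=\ln'(T_1y_2/y_1^2)$ and $v_2=\ln'(T_2y_1^3/y_2^2)$. A term-by-term comparison of the seven antisymmetric factors of \eqref{eq:miura} with those of \eqref{eq dy} (taking $\mc T_j=T_j$) then identifies $\tilde L=P\,L_{[\nabla]}\,P^{-1}=\mc D_{\bm y}$. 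Regularity of $\nabla$ at each $t_j$ is exactly the Bethe ansatz equation \eqref{3} for $\bm y$, so Lemma \ref{lem ker ssd} gives that $\Ker\mc D_{\bm y}=X$ is pure self-self-dual. In non-generic situations (coinciding or multiple $t_j$, or base points) I would instead read off the divided Wronskians $\bs y_\Gamma=(y_1,y_2,y_1^2,y_1^2,y_2,y_1)$ and invoke the combinatorial criterion of Lemma \ref{lem y ssd}. This establishes that the map lands in $\mb S\Omega_{\bLa,\bm k,\bm z}$.

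For bijectivity, injectivity is immediate once one knows $[\nabla]\mapsto L_{[\nabla]}$ is injective: a monic order-$7$ operator is recovered from its $7$-dimensional kernel, so $X$ determines $\tilde L$, hence $L_{[\nabla]}$, hence $[\nabla]$, since the Miura transformation is injective on opers (cf.\ \cite{F2}). For surjectivity, given $X\in\mb S\Omega_{\bLa,\bm k,\bm z}$ I would take the monic operator $\mc D_X$ with $\Ker\mc D_X=X$ and run the previous step backwards. Lemmas \ref{lem y ssd} and \ref{lem y} furnish a basis whose divided Wronskians satisfy $y_1^2=y_6^2=y_3=y_4$, $y_2=y_5$, so $\mc D_X=\mc D_{\bm y}$, and the logarithmic derivatives $v_1=\ln'(T_1y_2/y_1^2)$, $v_2=\ln'(T_2y_1^3/y_2^2)$ define a Miura oper $\nabla$ with $L_\nabla=P^{-1}\mc D_X P$. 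That $[\nabla]\in\Opg(\bP^1)_{\bLa,\bm z}$ follows because $X$ consists of polynomials, so $L_\nabla$ is monodromy-free (Lemma \ref{lem monodromy}), while the Schubert exponents of $\mc D_X$ conjugate back to \eqref{exp at z} and \eqref{exp at inf}, identifying the residues as $[\la^{(s)}]_W$ at $z_s$ and $-[\la]_W$ at $\infty$ by Lemma \ref{lem res id}, with trivial residue at the apparent singularities $t_j$. Hence $[\nabla]$ maps to $X$.

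I expect the self-self-duality step to be the main obstacle. One must verify that the antisymmetric factorization \eqref{eq:miura} of the Miura oper matches \eqref{eq dy} exactly, so that the constraints $T_1=T_3=T_4=T_6$ and $T_2=T_5$ emerge with the correct multiplicities, and—most delicately, for surjectivity—that the apparent singularities $t_j$ of the reconstructed Miura oper are genuinely regular. Equivalently, one must confirm that the polynomials $y_1,y_2$ read off from an arbitrary self-self-dual space form an honest $G_2$-population, so that Lemma \ref{lem ker ssd} (respectively Lemma \ref{lem y ssd}) applies even at non-generic root configurations and in the presence of base points.
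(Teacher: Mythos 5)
Your proposal is correct and takes essentially the same route as the paper, whose own proof is only a sketch: the forward direction lands in $\Omega_{\bLa,\bm k,\bm z}$ via the exponent/monodromy argument of Theorem 6.7 of \cite{LMV2} and upgrades to self-self-duality using the factorized Miura form \eqref{eq:miura} together with Lemma \ref{lem y ssd}, while the converse constructs the Miura oper from the basis of Lemma \ref{lem y} using Lemmas \ref{lem monodromy}, \ref{lem exps} and Theorem 4.1 of \cite{MV5}. Your explicit identification $T_1^2T_2\cdot L_{[\nabla]}\cdot T_1^{-2}T_2^{-1}=\mc D_{\bm y}$ with $v_1=\ln'(T_1y_2/y_1^2)$, $v_2=\ln'(T_2y_1^3/y_2^2)$ checks out factor by factor against \eqref{eq dy}, and the delicate points you flag (regularity at the apparent singularities $t_j$ and non-generic root configurations, where Lemma \ref{lem y ssd} replaces Lemma \ref{lem ker ssd}) are precisely what the paper delegates to those citations.
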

	\begin{proof}
		Given $[\nabla]\in \Opg(\mathbb{P}^1)_{\bLa,\bm{z}}$,  using an argument similar to the proof of Theorem 6.7 in \cite{LMV2}, one shows that $\Ker (T_1^2T_2\cdot L_{[\nabla]}\cdot T_1^{-2}T_2^{-1})$ is a pure self-dual space in $\Omega_{\bLa,\bm k,\bs z}$. Note the special form of $L_{[\nabla]}$ in \eqref{eq:miura}. It follows from Lemma \ref{lem y ssd} that $\Ker (T_1^2T_2\cdot L_{[\nabla]}\cdot T_1^{-2}T_2^{-1})$ is a pure self-self-dual space.
		
		Conversely, let $X\in{\mb S}\Omega_{\bLa,\bm k,\bs z}$. With the help of Lemmas \ref{lem y}, \ref{lem monodromy}, \ref{lem exps} and Theorem 4.1 of \cite{MV5}, one can construct a $^t\g$-oper $[\nabla_X]\in \Opg(\mathbb{P}^1)_{\bLa,\bm{z}}$ in a similar way as in the proof of Theorem 6.7 of \cite{LMV2}. 
	\end{proof}
	
	\subsection{Proof of Theorem \ref{bi rep ssgr}}\label{sec proof G}
	Let $\bLa=(\la^{(1)},\dots,\la^{(n)},\la)$ be a sequence of dominant integral $\g$-weights. Consider the $\g$-module
	$V_{\bLa}=V_{\la^{(1)}}\otimes\dots\otimes V_{\la^{(n)}}\otimes V_{\la}$. It follows from Theorem 3.2 and Corollary 3.3 of \cite{R} that there exists a bijection between joint eigenvalues of the Bethe algebra $\mc B$ on $(V_{\la^{(1)}}(z_1)\otimes\dots\otimes V_{\la^{(n)}}(z_n))_\la^\sing$ and the $^t\g$-opers in $\Opg(\mathbb{P}^1)_{\bLa,\bm{z}}$.

	\begin{proof} [Proof of Theorem \ref{bi rep ssgr}.]
    In the setup of Theorem \ref{bi rep ssgr}, $k$ given by \eqref{k} is zero.
	
	If $v$ is a Bethe vector in $(V_{\bLa,\bs z})^\g$, then $\Ker (T_1^2T_2\cdot \mc D_v\cdot T_1^{-2}T_2^{-1})\in \mb S\Omega_{\bLa,\bs z}$ by Lemma \ref{lem ker ssd} and Proposition \ref{prop diff oper}. The Bethe ansatz for $(\bigotimes _{s=1}^m V_{\omega_2}(\tl z_s))^\sing$ is complete for generic $(\tl z_1,\dots,\tl z_m)$ by Theorem \ref{thm G completeness}. Any finite-dimensional irreducible $\g$-module is a submodule in a tensor power of 
	$V_{\omega_2}$. The self-self-dual Grassmannian $\mb S \Gr_d$ is a closed set. Therefore, using Lemma \ref{exponents}, we deduce that $\Ker (T_1^2T_2\cdot \mc D_v\cdot T_1^{-2}T_2^{-1})\in \mb S\Omega_{\bLa,\bs z}$ holds in general.
	 
	The second statement follows from Lemma \ref{B2 and B6}, Theorem \ref{thm bij oper sd}, and Corollary 3.3 of \cite{R}.
	\end{proof}
	
    \begin{appendix}
	\section{The formula for solutions}
	\label{sec:solutions}
	In this section, we give explicit formulas for the solutions of the Bethe ansatz equation associated to data $\bs \La=(\la,\omega_2)$, $\bs z=(0,1)$ and admissible $\bm l_i$, $i=0,1,\dots,6$, see \eqref{adm l}.
	
	We use $\bm {y^{l_i}}=(y_1^{(i)},y_2^{(i)})$ to represent the solution of the Bethe ansatz equation associated to $\bLa,\bs z, \bm l_i$.
	
	{\bf The case of $i=0$}.
	This case is trivial. We have $\bm {y^{l_0}}=(1,1)$.

	\medskip
	
	{\bf The case of $i=1$}.
	We have $$ y_1^{(1)}=1,\qquad y_2^{(1)}=x-\dfrac{\la_2}{\la_2+1}.$$

	{\bf The case of $i=2$}.
	We have
	\begin{align*} 
	& y_1^{(2)}=x-\dfrac{\la_1(3\la_1+\la_2+3)}{(\la_1+1)(3\la_1+\la_2+4)},\\ 
	 & y_2^{(2)}=x-\dfrac{3\la_1+\la_2+3}{3\la_1+\la_2+4}.
	\end{align*}
		
	{\bf The case of $i=3$}.
	We have  
	\[
	y_1^{(3)}=x-\dfrac{(3\la_1+\la_2+3)(3\la_1+2\la_2+4)}{(3\la_1+\la_2+5)(3\la_1+2\la_2+6)},\hspace{175pt}
	\] 

	\begin{align*}
	y_2^{(3)}=x^2-&\dfrac{2(3\la_1+2\la_2+4)(3\la_1+5\la_2+3\la_1\la_2+\la_2^2+6)}{(\la_2+2)(3\la_1+\la_2+5)(3\la_1+2\la_2+6)}x\\&\qquad\qquad\qquad\qquad\qquad+\frac{\la_2(3\la_1+\la_2+3)(3\la_1+2\la_2+4)^2}{(\la_2+2)(3\la_1+\la_2+5)(3\la_1+2\la_2+6)^2}.
	\end{align*}
			
	{\bf The case of $i=4$}.
	We have  
	\[
	y_1^{(4)}=x-\dfrac{(\la_1+\la_2+1)(3\la_1+2\la_2+4)}{(\la_1+\la_2+2)(3\la_1+2\la_2+5)},\hspace{200pt}
	\]

	\begin{align*}
	y_2^{(4)}=\,&x^3-\dfrac{3(\la_1+\la_2+1)(3\la_1\la_2+2\la_2^2-2\la_1+3\la_2-2)}{\la_2(\la_1+\la_2+2)(3\la_1+2\la_2+5)}x^2\\&+\frac{3(\la_2-1)(\la_1+\la_2+1)^2(3\la_1+2\la_2+4)(3\la_1\la_2+2\la_2^2+2\la_1+6\la_2+4)}{\la_2(\la_2+1)(\la_1+\la_2+2)^2(3\la_1+2\la_2+5)^2}x\\&-\frac{(\la_2-1)(\la_1+\la_2+1)^3(3\la_1+2\la_2+4)^2}{(\la_2+1)(\la_1+\la_2+2)^3(3\la_1+2\la_2+5)^2}.
	\end{align*}	
		
	{\bf The case of $i=5$}.
	We have  
	\begin{align*}
	y_1^{(5)}=&\,x^2-\dfrac{2(2\la_1+\la_2+2)(3\la_1+\la_2+2)(3\la_1^2+2\la_1\la_2+6\la_1+\la_2+3)}{(\la_1+1)(2\la_1+\la_2+3)(3\la_1+\la_2+3)(3\la_1+2\la_2+5)}x\hspace{100pt}\\&\hspace{120pt}+\frac{\la_1(2\la_1+\la_2+2)^2(3\la_1+\la_2+2)(3\la_1+2\la_2+4)}{(\la_1+1)(2\la_1+\la_2+3)^2(3\la_1+\la_2+4)(3\la_1+2\la_2+5)},
	\end{align*}
	
	\begin{align*}
	y_2^{(5)}&=x^3-\dfrac{3(2\la_1+\la_2+2)(9\la_1^2+9\la_1\la_2+2\la_2^2+20\la_1+9\la_2+11)}{(2\la_1+\la_2+3)(3\la_1+\la_2+3)(3\la_1+2\la_2+5)}x^2\\+&\frac{3(3\la_1+\la_2+2)(2\la_1+\la_2+2)^2(3\la_1+2\la_2+4)(9\la_1^2+9\la_1\la_2+2\la_2^2+25\la_1+12\la_2+18)}{(2\la_1+\la_2+3)^2(3\la_1+\la_2+3)(3\la_1+\la_2+4)(3\la_1+2\la_2+5)^2}x\\&\qquad\qquad\qquad\qquad\qquad\qquad\qquad\qquad-\frac{(2\la_1+\la_2+2)^3(3\la_1+\la_2+2)(3\la_1+2\la_2+4)^2}{(2\la_1+\la_2+3)^3(3\la_1+\la_2+4)(3\la_1+2\la_2+5)^2}.
	\end{align*}
\begin{landscape}
	{\bf The case of $i=6$}.
	We have  	
	\begin{align*}
	y_1^{(6)}=&\,x^2-\dfrac{2(2\la_1+\la_2+2)(3\la_1+2\la_2+3)(3\la_1^2+4\la_1\la_2+\la_2^2+8\la_1+5\la_2+6)}{(\la_1+\la_2+2)(2\la_1+\la_2+3)(3\la_1+\la_2+4)(3\la_1+2\la_2+4)}x\\&\quad+\frac{(\la_1+\la_2+1)(2\la_1+\la_2+2)^2(3\la_1+\la_2+3)(3\la_1+2\la_2+3)}{(\la_1+\la_2+2)(2\la_1+\la_2+3)^2(3\la_1+\la_2+4)(3\la_1+2\la_2+5)}.\\
	~\\
	y_2^{(6)}=& \,x^4-\Big(9 \lambda_1^4 (4 \lambda_2+3)+3 \lambda_1^3 \left(30 \lambda_2^2+78 \lambda_2+47\right)+\lambda_1^2 \left(80 \lambda_2^3+372 \lambda_2^2+562 \lambda_2+270\right)\\&+\lambda_1 \left(30 \lambda_2^4+208 \lambda_2^3+528 \lambda_2^2+578 \lambda_2+228\right)+2 (\lambda_2+2)^2 \left(2 \lambda_2^3+11 \lambda_2^2+18 \lambda_2+9\right)\Big)\\&\times\frac{2x^3}{(\lambda_2+1) (\lambda_1+\lambda_2+2) (2 \lambda_1+\lambda_2+3) (3 \lambda_1+\lambda_2+4) (3 \lambda_1+2 \lambda_2+4)}\\&+  \Big(27 \lambda_1^5 (1 + 2 \lambda_2) + 
3 \lambda_1^4 (70 + 147 \lambda_2 + 57 \lambda_2^2) + 
3 \lambda_1^3 (215 + 493 \lambda_2 + 333 \lambda_2^2 + 70 \lambda_2^3) \\&+ 
2 (2 + \lambda_2)^2 (27 + 60 \lambda_2 + 47 \lambda_2^2 + 16 \lambda_2^3 + 2 \lambda_2^4) + 
\lambda_1^2 (978 + 2498 \lambda_2 + 2263 \lambda_2^2 + 880 \lambda_2^3 + 125 \lambda_2^4) \\&+ 
2 \lambda_1 (366 + 1049 \lambda_2 + 1165 \lambda_2^2 + 634 \lambda_2^3 + 170 \lambda_2^4 + 18 \lambda_2^5)\Big)\\&\times\frac{6 (1 + \lambda_1 + \lambda_2) (2 + 2 \lambda_1 + \lambda_2) (3 + 3 \lambda_1 + 2 \lambda_2)x^2}{(\lambda_2+1) (\lambda_1+\lambda_2+2)^2 (2 \lambda_1+\lambda_2+3)^2 (3 \lambda_1+\lambda_2+4) (3 \lambda_1+2 \lambda_2+4)^2 (3 \lambda_1+2 \lambda_2+5)}\\&-\Big(3 \lambda_1^3 (1 + 4 \lambda_2) + 
2 (2 + \lambda_2)^2 (3 + 4 \lambda_2 + \lambda_2^2) + 2 \lambda_1^2 (9 + 28 \lambda_2 + 11 \lambda_2^2) + 
2 \lambda_1 (18 + 47 \lambda_2 + 31 \lambda_2^2 + 6 \lambda_2^3)\Big)\\&\times \frac{2 (1 + \lambda_1 + \lambda_2)^2 (2 + 2 \lambda_1 + \lambda_2)^2 (3 + 3 \lambda_1 + 2 \lambda_2)^2  x}{(1 + \lambda_2) (2 + \lambda_1 + \lambda_2)^3 (3 + 
	2 \lambda_1 + \lambda_2)^3 (4 + 3 \lambda_1 + \lambda_2) (4 + 3 \lambda_1 + 2 \lambda_2) (5 + 3 \lambda_1 + 2 \lambda_2)}\\&+\frac{\lambda_2 (\lambda_1+\lambda_2+1)^3 (2 \lambda_1+\lambda_2+2)^3 (3 \lambda_1+\lambda_2+3) (3 \lambda_1+2 \lambda_2+3)^2}{(\lambda_2+1) (\lambda_1+\lambda_2+2)^3 (2 \lambda_1+\lambda_2+3)^3 (3 \lambda_1+\lambda_2+4) (3 \lambda_1+2 \lambda_2+5)^2}.
\end{align*}
\end{landscape}

\begin{landscape}
	\section{An example of $\g$-stratification of $\mb S\Gr_{11}$}\label{app ex}
	\begin{eg}\label{eg gl-strata}
		The following picture gives an example of the $\g$-stratification of $\mb S\Gr_{11}$. Here we simply write $((\la^{(1)})_{k_1},\dots,(\la^{(n)})_{k_n})$ for $\mb S\Omega_{\bLa,\bm k}$. We also simply write $\la^{(s)}$ for $(\la^{(s)})_{0}$. For instance, $((0,1)_1,(0,1))$ represents $\mb S\Omega_{\bLa,\bm k}$ where $\bLa=(\omega_2,\omega_2)$ and $\bm k=(1,0)$. The arrows represent simple degenerations.
		
		\medskip
		
		\begin{center}
			\begin{tikzpicture}[->,>=stealth',shorten >=1pt,auto,node distance=2.8cm]
			\tikzstyle{every state}=[rectangle,draw=none,text=black]
			
			\node[state]         (S1) at (-11.5, 4.6)        {$((0,1),(0,1),(0,1),(0,1))$};
			\node[state]         (S4) at (-6.5, 4.6)        {$((0,1),(0,1),(0,1),(0,0)_1)$};
			\node[state]         (S2) at (-1.5, 4.6)        {$((0,1),(0,1),(0,0)_1,(0,0)_1)$};
			\node[state]         (S3) at (3.5, 4.6)         {$((0,0)_1,(0,0)_1,(0,0)_1,(0,0)_1)$};
			\node[state]         (xin2) at (-9.25, 0.6)           {$((1,0),(0,1),(0,1))$};
			\node[state]         (xin3) at (-5.75, 0.6)        {$((0,1)_1,(0,1),(0,1))$};
			\node[state]         (xin4) at (-2.25, 0.6)        {$((0,0)_2,(0,1),(0,1))$};
			\node[state]         (xin1) at (-12.75, 0.6)           {$((0,2),(0,1),(0,1))$};
			\node[state]         (xin5) at (1.25, 0.6)           {$((0,1)_1,(0,1),(0,0)_1)$};
			\node[state]         (xin6) at (4.75, 0.6)           {$((0,0)_2,(0,0)_1,(0,0)_1)$};
			\node[state]         (xout2) at (-9.25, -3.4)        {$((1,0),(1,0))$};
			\node[state]         (xout1) at (-12.75, -3.4)        {$((0,2),(0,2))$};
			\node[state]         (xout3) at (-5.75, -3.4)        {$((0,1)_2,(0,1))$};
			\node[state]         (xout4) at (-2.25, -3.4)        {$((0,1)_1,(0,1)_1)$};
			\node[state]         (xout5) at (1.25, -3.4)        {$((0,0)_2,(0,0)_2)$};
			\node[state]         (xout6) at (4.75, -3.4)        {$((0,0)_3,(0,0)_1)$};
			\node[state]         (DC) at (-4, -7.4)           {$((0,0)_4)$};
			
			\path
			(S1) edge (xin1)
			(S1) edge (xin2)
			(S1) edge (xin3)
			(S1) edge (xin4)
			(S2) edge (xin5)
			(S2) edge (xin6)
			(S2) edge (xin4)
			(S3) edge (xin6)
			(S4) edge (xin3)
			(S4) edge (xin5)
			(xin1) edge (xout1)
			(xin1) edge (xout3)
			(xin2) edge (xout2)
			(xin2) edge (xout3)	
			(xin3) edge (xout3)
			(xin3) edge (xout4)	
			(xin4) edge (xout3)
			(xin4) edge (xout5)	
			(xin5) edge (xout3)	
			(xin5) edge (xout4)	
			(xin5) edge (xout6)	
			(xin6) edge (xout5)	
			(xin6) edge (xout6)	
			(xout1) edge (DC)
			(xout2) edge (DC)
			(xout3) edge (DC)
			(xout4) edge (DC)
			(xout5) edge (DC)
			(xout6) edge (DC);		
			\end{tikzpicture}
			
			Example \ref{eg gl-strata}
		\end{center}
	\end{eg}
\end{landscape}	
\end{appendix}
	
\end{document}